\tikzset{->-/.style={decoration={markings,mark=at position #1 with {\arrow{>}}},postaction={decorate}}} % use as [->-=#1]
\newtheorem{theorem}{Theorem}[section]
\theoremstyle{definition}
\newtheorem{proposition}[theorem]{Proposition}
\newtheorem{lemma}[theorem]{Lemma}
\newtheorem{definition}[theorem]{Definition}
\newtheorem{remark}[theorem]{Remark}
\newtheorem{corollary}[theorem]{Corollary}
\newtheorem{example}[theorem]{Example}
\newtheorem{observation}[theorem]{Observation}
\theoremstyle{remark}
\newcommand{\Z}{\mathbb Z}
\newcommand{\Q}{\mathbb Q}
\newcommand{\C}{\mathbb C}
\renewcommand{\L}{\mathcal L}
\newcommand{\B}{\mathcal B}
\newcommand{\A}{\mathcal A}
\newcommand{\dX}{\mathrm{d}}
\DeclareMathOperator{\id}{id}
\DeclareMathOperator{\Li}{Li}
\DeclareMathOperator{\gr}{gr}
\DeclareMathOperator{\ev}{ev}
\DeclareMathOperator{\INV}{INV}
\DeclareMathOperator{\symb}{symb}
\DeclareMathOperator{\Mult}{Mult}
\def\cxymatrix#1{\xy*[c]\xybox{\xymatrix#1}\endxy}
\newcommand*{\Relbarfill@}{\arrowfill@\Relbar\Relbar\Relbar}
\newcommand*{\xequal}[2][]{\ext@arrow 0055\Relbarfill@{#1}{#2}}
\title{Hopf algebras of multiple polylogarithms, and holomorphic 1-forms}
\author{Zachary Greenberg}
\address{Universit\"at Heidelberg\\
         Mathematiches Institut\\
         69120 Heidelberg Germany}
\email{zgreenberg@mathi.uni-heidelberg.de}
\author{Dani Kaufman}
\address{University of Copenhagen \\
         Department of Mathematical Sciences \\
         2100 Copenhagen \o, Denmark 
         \newline
         {\tt \url{https://sites.google.com/danikaufman/home}}}
\email{dk@math.ku.dk}
\author{Haoran Li}
\address{University of Maryland \\
         Department of Mathematics \\
         College Park, MD 20742-4015, USA}
\email{haoranli@umd.edu }
\author{Christian K. Zickert}
\address{University of Maryland \\
         Department of Mathematics \\
         College Park, MD 20742-4015, USA} %\newline
\email{zickert@umd.edu}
\thanks{C.~Z.~was supported in part by DMS-1711405.\\
\newline
2020 {\em Mathematics Classification.} Primary 11G55. %polylogs and K-theory
Secondary 19E15, 14D07, 32G20. % alg cycles and motivic cohomology, variation of Hodge structures, period matrices variation of Hodge structures and degeneracies
\newline
{\em Key words and phrases: Hopf algebras of polylogarithms, variation matrices, iterated integrals, multiple polylogarithms, variations of mixed Hodge structures, symbol map.}
}
\begin{document}
\begin{abstract}
We associate to a multiple polylogarithm a holomorphic 1-form on the universal abelian cover of its domain. We relate the 1-forms to the symbol and variation matrix and show that the 1-forms naturally define a lift of the variation of mixed Hodge structure associated to a polylogarithm. The results are conveniently described in terms of a variant $\mathbb H^{\symb}$ of Goncharov's Hopf algebra of multiple polylogarithms. In particular, we show that the association of a 1-form to a multiple polylogarithm induces a map from the Chevalley-Eilenberg complex of the Lie coalgebra of indecomposables of $\mathbb H^{\symb}$ to the de Rham complex.
\end{abstract}
\maketitle
\setcounter{tocdepth}{2}
\tableofcontents

\section{Summary of Results}\label{sec:Summary}
The \emph{multiple polylogarithm}~\cite{GoncharovPolylogsArithmetic} of \emph{weight} $n_1+\dots+n_d$ and \emph{depth} $d$ is defined by the power series
\begin{equation}\label{eq:PowerSeriesLi}
    \Li_{n_1,\dots,n_d}(x_1,\dots,x_d)=\sum_{k_1< \dots < k_d} \frac{x_1^{k_1}\cdots x_d^{k_d}}{k_1^{n_1}\cdots k_d^{n_d}}.
\end{equation}
It defines a multivalued (single valued on the universal cover) holomorphic function on the space 
\begin{equation}
    S_d(\C)=\left\{(x_1,\dots,x_d)\in\C^d\bigm \vert x_i\neq 0,\,\prod_{r=i}^jx_r\neq 1 \text{ for all }1\leq i\leq j\leq d\right\}.
\end{equation}
There is a simple model for the universal abelian cover of $S_d(\C)$, namely
\begin{equation}
\widehat S_d(\C)=\left\{(u_i,v_{i,j})\in \C^{d+\binom{d+1}{2}}\bigm\vert \exp(\sum_{r=i}^j u_i)+\exp(v_{i,j})=1 \text{ for all } 1\leq i\leq j\leq d\right\}.
\end{equation}
Our goal is to associate to a multiple polylogarithm $\Li_{\mathbf n}(\mathbf x)$ of depth $d$ a holomorphic 1-form $w_{\mathbf n}$ on $\widehat S_d(\C)$, and to study the structure and properties of these forms. Most of our results are defined using a Hopf algebra $\mathbb H^{\symb}$ of symbolic polylogarithms.

\begin{remark}
When $d=1$, $\widehat S_d(\C)$ is the space $\widehat\C=\{(u,v)\in\C^2\mid e^u+e^v=1\}$ introduced by Neumann~\cite{Neumann} and playing a prominent role e.g.~ in~\cite{ZickertAlgK,GaroufalidisThurstonZickert,Zickert_HolomorphicPolylogarithmsAndBlochComplexes}.
\end{remark}

\subsection{Hopf algebras of polylogarithms}
Motivated by attempts to construct the Hopf algebra of regular functions on the motivic Galois group, Goncharov~\cite{Goncharov_GaloisSymmetriesOfFundamentalGroupoidsAndNoncommutativeGeometry,GoncharovPeriodsMixedMotives} constructed several Hopf algebras related to polylogarithms and iterated integrals. The slightly modified variant $\mathbb H^{\symb}$ considered here is a free Hopf algebra with generators $[x_{i_1\to i_2},\dots,x_{i_d\to i_{d+1}}]_{n_1,\dots,n_d}$ and $[x_i]_0$ in one-to-one correspondence with functions
\begin{equation}
    \Li_{n_1,\dots,n_d}(x_{i_1\to i_2},\dots,x_{i_d\to i_{d+1}}),\qquad \log(x_i), \quad \text{ where }\qquad x_{i\to j}=\prod_{r=i}^{j-1} x_r.
\end{equation}
We stress that the variables $x_i$ are in order and form an unbroken sequence, e.g.~there are no generators corresponding to $\Li_{2,1}(x_2,x_1)$ or  $\Li_{2,2}(x_1,x_3)$. The coproduct is the one defined by Goncharov~\cite[Prop.~6.1]{Goncharov_GaloisSymmetriesOfFundamentalGroupoidsAndNoncommutativeGeometry} except that the \emph{inverse terms} (e.g.~$\Li_{2,1}(x_2^{-1},x_1^{-1})$) appearing in Goncharov's formula are inverted using a function $\INV$, which is a variant of Goncharov's \emph{inversion formula}~\cite[Sec.~2.6]{Goncharov_MultiplePolylogarithmsAndMixedTateMotives}. We refer to Section~\ref{sec:HopfAlgebras} for the precise definition. %Also there are no generators involving inverses, e.g.~no ~$\Li_{2,1}(x_2^{-1},x_1^{-1})$ 

A key property of $\mathbb H^{\symb}$ is that the coproduct of $[x_1,\dots,x_n]_{n_1,\dots,n_d}$ can be written entirely in terms of \emph{contractions}. For $\mathbf i= (i_1,\dots,i_{d+1})$ a strictly increasing subsequence with $i_{d+1}\leq n+1$, the contraction of $(x_1,\dots,x_n)$ by $\mathbf{i}$ is given by
\begin{equation}
    \mathbf i(x_1,\dots,x_n)=(x_{i_1\to i_2},\dots, x_{i_d\to i_{d+1}}).
\end{equation}
As a result we obtain that for any \emph{contraction system}, i.e.~a collection of sets $\{X_k\}_{k=1}^\infty$ with contractions $\mathbf i\colon X_n\to X_d$  (see Definition~\ref{def:ContractionSystem}) we have a Hopf algebra freely generated by symbols $[\alpha]_{n_1,\dots,n_d}$ for $\alpha\in X_d$ and $[\alpha]_0$ for $\alpha\in X_1$. The coproduct is induced by that of $\mathbb H^{\symb}$. We have natural contraction systems associated to the following situations (see Section~\ref{sec:ContractionHopfAlgebras}):
\begin{itemize}
    \item A field $F$.
    \item A field $F$ together with a torsion free $\Z$-extension $\pi\colon E\to F^*$.
    \item An open subset of a complex manifold $M$.
    \item Any simplicial set $S_\bullet$.
\end{itemize}
We thus obtain Hopf algebras (a sheaf of Hopf algebras in the third case)
\begin{equation}\label{eq:AssHopfAlgs}
    \mathbb H^{\symb}(F),\qquad \widehat{\mathbb H}^{\symb}_E(F),\qquad \widehat{\mathbb H}_M,\qquad \mathbb H^{\symb}_{S_\bullet}.
\end{equation}
The ``hat'' indicates that the contraction system involves a universal abelian cover (or an algebraic analogue). 

For any graded Hopf algebra $H$ there is an associated Lie coalgebra $L=\frac{H_{>0}}{H_{>0}H_{>0}}$ of indecomposables. We thus have a Lie coalgebra $\mathbb L^{\symb}$ along with Lie coalgebras
\begin{equation}\label{eq:LieCoalgebras}
     \mathbb L^{\symb}(F),\qquad \widehat{\mathbb L}^{\symb}_E(F),\qquad \widehat{\mathbb L}_M,\qquad \mathbb L^{\symb}_{S_\bullet}.
\end{equation}
\begin{remark} We stress that $\mathbb H^{\symb}$ is a Hopf algebra over $\Q$, but $\mathbb L^{\symb}$ is a Lie coalgebra over $\Z$, as are all the Lie coalgebras in~\eqref{eq:LieCoalgebras}.
\end{remark}

\subsection{The forms}

Let $\Omega^*$ denote the algebraic de Rham complex for the polynomial ring generated over $\Q$ by free variables $u_i$ and $v_{i,j}$ (where $1\leq i\leq j\in\Z$). Note that $\omega\in\Omega^k$ can be canonically realized as a holomorphic $k$-form on $\widehat S_d(\C)$ for any large enough $d$. 

In Section~\ref{sec:Forms} we construct a map
\begin{equation}
    w\colon\mathbb H^{\symb}\to \Omega^1
\end{equation}
which is such that the image of $\mathbb H^{\symb}(d)$, the subalgebra generated by terms involving only $x_i$ for $i\leq d$, has a canonical realization in $\Omega^1(\widehat S_d(\C))$. %In weight 1, $w$ is simply the map taking $[\prod_{r=i}^jx_r]_1$ to $-\dX v_{i,j}$ and $[x_i]_0$ to $\dX u_i$. 
\begin{remark}The map $w$ factors through the \emph{symbol map} and can be easily expressed in terms of the \emph{symbol modulo products} (see Lemma~\ref{lemma:wFac}). 
\end{remark}
\begin{example}\label{ex:FormsForClassicalPolylogs}
In depth 1 we have ($v_{i}$ is shorthand for $v_{i,i}$)
\begin{equation} w[x_i]_0=\dX u_i,\quad w[\prod_{r=i}^jx_r]_1=-\dX v_{i,j},\quad w[x_1]_{n\geq 2} = (-1)^{n}\frac{1}{n!} u_1^{(n-2)}(u_1~\dX v_1 - v_1~\dX u_1).
\end{equation}
We note that $(n-1)w[x_1]_n$ are exactly the forms that appeared in  ~\cite{Zickert_HolomorphicPolylogarithmsAndBlochComplexes}.
\end{example}
\begin{example} We have
\begin{equation}
w[x_1,x_2]_{1,1} = \frac{1}{2}\big(v_{1,2}~\dX u_1 + (v_2-v_{1,2}) ~\dX v_1 + (v_{1,2}-v_1)~\dX v_2 - (u_1-v_1+v_2)~\dX v_{1,2}\big).
\end{equation}
\end{example}
In higher weight and depth the forms are most easily computed via a recurrence relation. Such a relation first appeared in Greenberg's thesis~\cite{ZackThesis}. We express it more cleanly in terms of the variation matrix (see Section~\ref{sec:Recurrence}).

Our main structural result about the 1-forms is the following.
\begin{theorem}\label{thm:ComDiagram} The map $w$ kills products and induces a commutative diagram
\begin{equation}
    \xymatrixcolsep{5pc}\cxymatrix{{\mathbb L^{\symb}\ar[r]^{\delta}\ar[d]^-w&\wedge^2(\mathbb L^{\symb})\ar^{\delta\wedge\id-\id\wedge\delta}[r]\ar[d]^{w\wedge w}&\wedge^3(\mathbb L^{\symb})\ar[r]\ar[d]^{w\wedge w\wedge w}&\dots\\\Omega^1\ar[r]^-\dX&\Omega^2\ar[r]^-\dX&\Omega^3\ar[r]&\dots}}
\end{equation}
In particular, we obtain a chain map from $\wedge^*(\mathbb L^{\symb}(d))$ to the de Rham complex of $\widehat S_d(\C)$.
\end{theorem}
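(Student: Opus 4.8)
The plan is to reduce the whole theorem to the commutativity of the single leftmost square, namely the identity
\[ \dX\, w(\xi) = (w\wedge w)\,\delta(\xi)\qquad\text{for all }\xi\in\mathbb L^{\symb}, \]
and then to propagate this formally to the right. First I would record that $w$ descends to indecomposables: by the factorization Lemma~\ref{lemma:wFac} the map $w$ factors through the symbol modulo products, and the symbol of a product $ab$ is the shuffle $\symb(a)\shuffle\symb(b)$, which is a shuffle product and hence vanishes modulo products. Thus $w$ kills $(\mathbb H^{\symb}_{>0})^2$ and induces a well-defined $w\colon\mathbb L^{\symb}\to\Omega^1$, so every vertical arrow in the diagram makes sense. (That $\delta$ is defined over $\Z$ while $\Omega^*$ lives over $\Q$ is harmless, since $\Z\subset\Q$.)

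Granting the first square, the remaining squares follow from the Leibniz rule together with the shape of the two differentials. The top differential is the Chevalley--Eilenberg differential of the Lie coalgebra $\mathbb L^{\symb}$, which acts on $\wedge^n\mathbb L^{\symb}$ by inserting $\delta$ into each tensor slot with the appropriate Koszul signs, and the bottom differential $\dX$ is a graded derivation. Placing the generators of $\mathbb L^{\symb}$ in odd degree, so that $w$ is degree preserving into the de Rham complex where $1$-forms are odd, makes the two sign systems agree. Then on $\ell_1\wedge\dots\wedge\ell_n$ one expands $\dX\big(w(\ell_1)\wedge\dots\wedge w(\ell_n)\big)=\sum_i(-1)^{i-1}w(\ell_1)\wedge\dots\wedge\dX w(\ell_i)\wedge\dots\wedge w(\ell_n)$, replaces each $\dX w(\ell_i)$ by $(w\wedge w)\delta(\ell_i)$ using the first square, and recognizes the outcome as $w^{\wedge(n+1)}$ applied to the Chevalley--Eilenberg differential. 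Combined with $\dX^2=0$ and the co-Jacobi identity (which yields $d_{CE}^2=0$), this shows both rows are complexes and that $w^{\wedge\bullet}$ is a chain map.

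The heart of the matter is therefore the first square, which I would prove by induction on weight using the recurrence of Section~\ref{sec:Recurrence}. In the base case the weight-$1$ generators (the $[x_i]_0$ and the $[\prod_{r=i}^j x_r]_1$) are primitive, so $\delta$ vanishes on them, while their forms $\dX u_i$ and $-\dX v_{i,j}$ are closed; both sides are $0$. For the inductive step I would use the recurrence, which expresses $w_{\mathbf n}$ through the variation matrix, that is, through lower-weight forms and the $\dX$ of the letters $u_i,v_{i,j}$. Conceptually the variation matrix assembles the forms into the connection $1$-form $\Theta$ of the flat (Gauss--Manin) connection attached to the polylogarithm, and the first square is precisely the flatness equation $\dX\Theta=\Theta\wedge\Theta$ read entrywise; concretely, differentiating the recurrence and invoking the inductive hypothesis reorganizes $\dX w_{\mathbf n}$ into a sum of wedges $w(\eta)\wedge w(\zeta)$ of lower-weight forms.

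The hard part will be the purely combinatorial matching: showing that these wedges are indexed exactly by the terms of the coproduct $\delta$. This is where Goncharov's description of the coproduct through contractions is essential, and where one must check that the $\INV$-corrected inverse terms differentiate in the manner demanded by $\delta$; verifying that the contraction bookkeeping on the two sides coincides is the step I expect to require the most care. Finally, restricting the established chain map to the sub-Lie-coalgebra $\mathbb L^{\symb}(d)$ and using the canonical realization of $\Omega^*$ as holomorphic forms on $\widehat S_d(\C)$ yields the asserted chain map to the de Rham complex of $\widehat S_d(\C)$.
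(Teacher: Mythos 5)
Your formal reductions are fine and agree with the paper: $w$ kills products because it factors as $\eta\circ\Pi\circ\Delta_{1,\dots,1}$, the symbol of a product is a shuffle, and $\Pi$ kills shuffles; and once the left square commutes, the rest of the diagram follows formally by the Leibniz rule. But the core of the theorem --- the identity $\dX w(\xi)=(w\wedge w)\delta(\xi)$ on generators --- is exactly the step you defer (``the purely combinatorial matching \dots the step I expect to require the most care''), so the proof is not complete. Moreover, the route you sketch for that step has a real flaw: the theorem is a statement in the \emph{algebraic} de Rham complex $\Omega^*$ of $\Q[\{u_i\},\{v_{i,j}\}]$, where the genuine flatness equation fails. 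Symbolically one only has $\dX\widehat\omega-\widehat\omega\wedge\widehat\omega=-e^{-\Omega}\omega\wedge\omega e^{\Omega}$ (Lemma~\ref{lemma:domegahat}), and $\omega\wedge\omega=0$ holds only after realization on $\widehat S_d(\C)$ (Lemma~\ref{lemma:Flatness}, which uses the $\dX\log$ structure of the entries). Since the realization maps $\Omega^2\to\Omega^2(\widehat S_n(\C))$ are far from injective (e.g.\ $\dX u_1\wedge\dX v_1\mapsto 0$ for every $n$, because $e^{u_1}\dX u_1+e^{v_1}\dX v_1=0$ there), proving the square after realization on $\widehat S_d(\C)$ would not give the symbolic statement the theorem asserts. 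Note also that $w(V^T)$ is not the connection form $\widehat\omega$ of the Gauss--Manin connection: they differ by the weight rescaling $\widehat\omega_n=(n-1)w(V_n)$, and it is precisely this rescaling that makes the Maurer--Cartan identity for $w(V^T)$ hold identically in $\Omega^*$ while the one for $\widehat\omega$ does not.

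The second point is that the ``combinatorial matching'' you anticipate is unnecessary, and attempting it by hand (contraction bookkeeping plus $\INV$ corrections) would be extremely painful. The paper's key move is that this matching is already packaged in the identity $\Delta(V^T)=V^T\otimes V^T$ established in Section~\ref{sec:symbolicVariationMatrix}: passing to indecomposables gives $\delta(V^T)=V^T\wedge V^T$, and since every generator of $\mathbb L^{\symb}$ occurs as an entry of $V$, the left square for all of $\mathbb L^{\symb}$ is equivalent to the single matrix identity $\dX w(V^T)=w(V^T)\wedge w(V^T)$, which is Proposition~\ref{prop:dw=wwedgew}. That identity is then proved by a self-contained computation --- no induction on weight, no recurrence from Section~\ref{sec:Recurrence} --- using the closed formula $w(V_n)=\frac{1}{n!}\sum_{k+l=n-1}(-1)^k\binom{n-1}{k}\Omega^k\omega\Omega^l$, the relation $\widehat\omega_n=(n-1)w(V_n)$, and Lemma~\ref{lemma:domegahat}; the $\omega\wedge\omega$ terms cancel identically against the discrepancy between $pq$ and $(p-1)(q-1)$, which is exactly where the statement becomes true in $\Omega^*$ rather than merely on $\widehat S_d(\C)$. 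If you wanted to salvage your induction-via-recurrence plan, you would still need $\Delta(V^T)=V^T\otimes V^T$ to organize the coproduct side, at which point the paper's direct computation is both shorter and already available.
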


The following is an immediate corollary of Theorem~\ref{thm:ComDiagram}.
\begin{theorem} The map $w$ induces a morphism of complexes of sheaves $\wedge^*(\widehat{\mathbb L}^{\symb}_M)\to\Omega^*_M$, where $\Omega^*_M$ is the holomorphic de Rham complex on a complex manifold $M$.
\end{theorem}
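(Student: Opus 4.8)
The plan is to realize the sheaf morphism by pulling back the universal forms of Theorem~\ref{thm:ComDiagram} along the classifying maps supplied by the contraction system of $M$. First I would recall that, by construction, a section of $\widehat{\mathbb H}_M$ over an open set $U\subseteq M$ is freely generated by symbols $[\alpha]_{n_1,\dots,n_d}$, where $\alpha\in X_d(U)$ is a $d$-tuple of holomorphic functions on $U$ satisfying the defining inequalities of $S_d$ together with a lift to the universal abelian cover; equivalently $\alpha$ is a holomorphic map $\phi_\alpha\colon U\to \widehat S_d(\C)$. The contraction maps $\mathbf i\colon X_n(U)\to X_d(U)$ are induced by the corresponding contraction maps $\widehat S_n(\C)\to\widehat S_d(\C)$, so the coproduct on $\widehat{\mathbb H}_M$, being defined entirely through contractions, is precisely the one induced from $\mathbb H^{\symb}$.

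Next I would define the map on sections by
\[
w_M\colon \widehat{\mathbb L}^{\symb}_M(U)\to \Omega^1_M(U),\qquad [\alpha]_{\mathbf n}\mapsto \phi_\alpha^*\big(w[x_1,\dots,x_d]_{\mathbf n}\big),
\]
and extend multiplicatively to the wedge powers. Because any element of $\Omega^*$ is a universal expression in the free variables $u_i,v_{i,j}$, the pullback $\phi_\alpha^*$ is a genuine holomorphic form on $U$; here the passage to the abelian cover is exactly what makes the branches $u_i,v_{i,j}$ single-valued. Compatibility with restriction is immediate since pullback is local, so $w_M$ is a morphism of sheaves. Since the universal $w$ kills products by Theorem~\ref{thm:ComDiagram}, so does $w_M$, and hence $w_M$ is well-defined on the Lie coalgebra of indecomposables $\widehat{\mathbb L}^{\symb}_M$.

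Finally I would check that $w_M$ is a chain map. The cobracket $\delta$ on $\widehat{\mathbb L}^{\symb}_M$ is induced from the universal one and is expressed through the contractions, which correspond under $\phi_\alpha$ to the contraction maps of the $\widehat S$-spaces; moreover the exterior derivative $\dX$ commutes with the holomorphic pullback $\phi_\alpha^*$. Therefore each square of the universal diagram of Theorem~\ref{thm:ComDiagram} pulls back, via $\phi_\alpha^*$, to a commuting square over $U$, and these assemble into the desired morphism of complexes of sheaves $\wedge^*(\widehat{\mathbb L}^{\symb}_M)\to\Omega^*_M$.

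The only genuine point to verify, and the place I expect the main subtlety, is the naturality of the construction: that $w_M$ is independent of the chosen presentation of a section and compatible with the contraction structure maps, so that the pulled-back squares are literally images of the universal square rather than merely isomorphic to it. This reduces to the observation that both $w$ and $\delta$ are defined functorially in the contraction system, so once the universal commutativity of Theorem~\ref{thm:ComDiagram} is established, every instance over $U$ follows by applying the functor $\phi_\alpha^*$, with no further computation at the level of forms required.
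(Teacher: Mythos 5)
Your proposal is correct and takes essentially the same route as the paper: the paper states this theorem as an immediate consequence of Theorem~\ref{thm:ComDiagram}, relying on the contraction-system construction of $\widehat{\mathbb H}^{\symb}_M$ from Example~\ref{ex:ComplexManifold} and the naturality of $w$ under contractions, which is exactly the pullback-along-classifying-maps argument you spell out. The only difference is that you make explicit the functoriality checks (pullback commuting with $\dX$ and with the evaluation-induced cobracket) that the paper leaves implicit.
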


\begin{remark} We expect that there is a quotient, $\widehat{\mathbb L}_M$, of $\widehat{\mathbb L}_M^{\symb}$ such that the hypercohomology of $M$ with coefficients in the complex $\wedge^*(\widehat{\mathbb L}_M)_n$ computes the \emph{integral} motivic cohomology groups $H^*_{\mathcal M}(M;\Z(n))$ and that the above map induces the de Rham realization.
\end{remark}

\begin{remark}
  A quotient $\mathbb L(F)$ of $\mathbb L^{\symb}(F)$ conjecturally computing the \emph{rational} motivic cohomology groups of $F$ was constructed in~\cite{GKLZLieCoalg}. We expect that there is a quotient $\widehat{\mathbb L}(F)$ of $\widehat{\mathbb L}_E^{\symb}(F)$ computing \emph{integral} cohomology, whose construction should be similar to the construction of lifted Bloch complexes in~\cite{Zickert_HolomorphicPolylogarithmsAndBlochComplexes}.
\end{remark}

\subsection{Variation matrices and mixed Hodge structure}
The monodromy of a multiple polylogarithm can be computed using a variation matrix. For the classical polylogarithms the variation matrix was defined by Deligne and Beilinson~\cite{Deligne_InterpretationMotiviqueDeLaConjectureDeZagierReliantPolylogarithmesEtRegulateurs} and for the multiple polylogarithms by Zhao~\cite{Zhao_MultipleZetaFunctionsMultiplePolylogarithmsAndTheirSpecialValues}. For example, for $\Li_3(x)$ and $\Li_{1,1}(x,y)$ the variation matrices are given by
\begin{equation}\label{eq:VarMatrices}
\begin{pmatrix}
1&&&\\\Li_1(x)&1&&\\\Li_2(x)&\log(x)&1&\\\Li_3(x)&\frac{1}{2}\log(x)^2&\log(x)&1
\end{pmatrix},
\qquad
\begin{pmatrix}
1&&&\\\Li_1(y)&1&&\\\Li_1(xy)&0&1&\\\Li_{1,1}(x,y)&\Li_1(x)&\Li_1(y)-\Li_1(x^{-1})&1
\end{pmatrix}.
\end{equation}
We give a purely symbolic definition of variation matrices as matrices with entries in $\mathbb H^{\symb}$ and show that
\begin{equation}\label{eq:DeltaVTIntro}
    \Delta(V^T)=V^T\otimes V^T.
\end{equation}
This result plays an important role in the proof of Theorem~\ref{thm:ComDiagram}.
It is known (see e.g.~\cite{Zhao_MultipleZetaFunctionsMultiplePolylogarithmsAndTheirSpecialValues}) that the variation matrix $V$ for a depth $d$ polylogarithm satisfies a differential equation $\dX V=\omega V$, where $\omega$ is a matrix of $1$-forms on $S_d(\C)$. Also, if  $V$ is an $N\times N$ matrix, then $V$ defines a variation of mixed Hodge structure on $S_d(\C)$ with Hodge filtration induced by the standard filtration on $\C^N$, weight filtration coming from the span of column vectors of $V$, and connection form $\nabla=\dX-\omega$.

The form $\omega$ is exact on $\widehat S_d(\C)$ with a canonical primitive $\Omega$. In Section~\ref{sec:variationOfHodgeStructure} we show that $\widehat V=e^{-\Omega} V$ defines a variation of mixed Hodge structure on $\widehat S_d(\C)$ whose connection form is given by $\dX-\widehat\omega$ where $\widehat\omega$ is determined by the 1-forms (rescaled by $(n-1)$). This gives a Hodge theoretic interpretation of the 1-forms appearing in Zickert~\cite{Zickert_HolomorphicPolylogarithmsAndBlochComplexes} (which as mentioned in Example~\ref{ex:FormsForClassicalPolylogs} are scaled by $(n-1)$). We note that the matrix $\widehat V$ is much sparser than $V$.

\subsection{Structure of the paper}
In Section~\ref{sec:HopfAlgebras} we define $\mathbb H^{\symb}$, the associated Hopf algebras~\eqref{eq:AssHopfAlgs}, and the symbolic variation matrix $V$. We also define a symbolic derivation corresponding to the standard derivative. The main results stated in this section are that $\mathbb H^{\symb}$ is a Hopf algebra, that $\Delta V^T=V^T\otimes V^T$, and that $V$ satisfies a symbolic differential equation $\dX V=\omega V$ where $\omega =\dX V_1$ and $V_1$ is the weight $1$ part of $V$. We also show that the antipode satisfies $S(V)=V^{-1}$. The proof of coassociativity uses properties of Goncharov's Hopf algebra of iterated integrals and are deferred to Sections~\ref{sec:IteratedIntegrals} and \ref{sec:DeltaCommutesWithINV}. In Section~\ref{sec:Tensors} we recall the Hopf algebra structure on the tensor algebra and the symbol map, which is used to define the 1-forms. In Section~\ref{sec:Forms} we define the map $w\colon\mathbb H^{\symb}\to\Omega^1$ and prove Theorem~\ref{thm:ComDiagram}. In Section~\ref{sec:variationOfHodgeStructure} we recall the variation of mixed Hodge-Tate structure associated to a multiple polylogarithm and show that it has a natural lift to the universal abelian cover. In Section~\ref{sec:IteratedIntegrals} we recall Goncharov's Hopf algebra of iterated integrals, and give an elementary proof that the coproduct is coassociative. We show that Goncharov's expression of iterated integrals in terms of multiple polylogarithms can be viewed as a morphism of Hopf algebras, and use this to prove coassociativity of the coproduct on $\mathbb H^{\symb}$. The most technical part, that $\INV$ commutes with $\Delta$, is relegated to Section~\ref{sec:DeltaCommutesWithINV}.

\section{Hopf algebras of symbolic polylogarithms}\label{sec:HopfAlgebras}
Let $x_1,x_2,\dots$ be free variables and let $x_{i\to j}=\prod_{r=i}^{j-1} x_r$.
Define $\mathbb H^{\symb}$ to be the free graded $\Q$-algebra generated by symbols $[x_{i_1\rightarrow i_2},\dots,x_{i_d\rightarrow i_{d+1}}]_{n_1,\dots,n_d}$ in weight $n_1+\dots+n_d$ and symbols $[x_i]_0$ in weight 1 (not 0). Here $\mathbf i=(i_1,\dots,i_{d+1})$ and $\mathbf n=(n_1,\dots,n_d)$ consist of positive integers, and $\mathbf i$ is strictly increasing. We define $[x_{i\to j}]_0=-[x_{i\to j}^{-1}]_0=\sum_{r=i}^{j-1}[x_r]_0$ and $[1]_0=0$.

As we shall see, $\mathbb H^{\symb}$ is a graded Hopf algebra. To define the coproduct, we introduce an auxillary  Hopf algebra $\overline{\mathbb H}^{\symb}$, which is generated by symbols as above (called \emph{regular symbols}) together with additional \emph{inverted symbols} $[x_{i_d\to i_{d+1}}^{-1},\dots,x_{i_1\to i_2}^{-1}]_{n_d,\dots,n_1}$. Note that the order of terms is reversed for inverted symbols.  %We first define a coproduct on $\overline{\mathbb H}^{\symb}$. 
Formulas for the coproducts are given below with proofs of coassociativity deferred to Section~\ref{sec:IteratedIntegrals}.
We use Goncharov's generating series (see \cite{Goncharov_MultiplePolylogarithmsAndMixedTateMotives})
\begin{equation}
\begin{aligned}
[\mathbf y|\mathbf t]=[y_1,\dots,y_d|t_1,\dots,t_d]&=\sum_{n_i\geq 1}[y_1,\dots,y_d]_{n_1,\dots,n_d}t_1^{n_1-1}\dots t_d^{n_d-1}\\
\exp([y]_0t)&=\sum_{i=0}^\infty \frac{[y]_0^n}{n!}.
\end{aligned}
\end{equation}

\begin{definition}[{c.f.~\cite[Prop~6.1]{Goncharov_GaloisSymmetriesOfFundamentalGroupoidsAndNoncommutativeGeometry}}]\label{def:GoncharovCoproduct} Define a coproduct $\Delta\colon\overline{\mathbb H}^{\symb}\to\overline{\mathbb H}^{\symb}\otimes\overline{\mathbb H}^{\symb}$ by $\Delta([x_i]_0)=[x_i]_0\otimes 1+1\otimes [x_i]_0$ and
% \begin{equation}\label{eq:GoncharovCoproductFormula}
% \begin{aligned}
%   \Delta([\mathbf y|\mathbf t])=\!\!\!\!\!\!\!\!\!\sum_{\substack{0=i_0\leq j_0<i_1\leq j_1<\dots <i_k\leq j_k<i_{k+1}=d+1}}&\!\!\!\!\![y_{i_1\to i_2},\dots,y_{i_k\to i_{k+1}}|t_{j_1},\dots ,t_{j_k}]\bigotimes\\
%     \prod_{\alpha=0}^k(-1)^{j_\alpha-i_\alpha}\exp([y_{i_\alpha&\to i_{\alpha+1}}]_0t_{j_\alpha})\Big([y_{j_\alpha-1}^{-1},y_{j_\alpha-2}^{-1},\dots,y_{i_\alpha}^{-1}|t_{j_\alpha}-t_{j_\alpha-1},\dots, t_{j_\alpha}-t_{i_\alpha}]\\
%     &\cdot[y_{j_\alpha+1},y_{j_\alpha+2},\dots,y_{i_{\alpha+1}-1}|t_{j_\alpha+1}-t_{j_\alpha},\dots,t_{i_{\alpha+1}-1}-t_{j_\alpha}]\Big).\\
% \end{aligned}
% \end{equation}
\begin{equation}\label{eq:GoncharovCoproductFormula}
\begin{aligned}
   \Delta([\mathbf y|\mathbf t])=\sum&[y_{i_1\to i_2},\dots,y_{i_k\to i_{k+1}}|t_{j_1},\dots ,t_{j_k}]\bigotimes\\
    &\prod_{\alpha=0}^k(-1)^{j_\alpha-i_\alpha}\exp([y_{i_\alpha\to i_{\alpha+1}}]_0t_{j_\alpha})[y_{j_\alpha-1}^{-1},y_{j_\alpha-2}^{-1},\dots,y_{i_\alpha}^{-1}|t_{j_\alpha}-t_{j_\alpha-1},\dots, t_{j_\alpha}-t_{i_\alpha}]\\
    &\hphantom{\prod_{\alpha=0} (-1)}[y_{j_\alpha+1},y_{j_\alpha+2},\dots,y_{i_{\alpha+1}-1}|t_{j_\alpha+1}-t_{j_\alpha},\dots,t_{i_{\alpha+1}-1}-t_{j_\alpha}].\\
\end{aligned}
\end{equation}
The sum is over all instances of $0=i_0\leq j_0<i_1\leq j_1<\dots <i_k\leq j_k<i_{k+1}=d+1$, and by definition we have $y_{i\to j}=\prod_{r=i}^{j-1}y_r$, $[\emptyset|\emptyset]=1$, $t_0=0$, and $y_0=1$.
\end{definition}

\begin{example}
In depth 1~\eqref{eq:GoncharovCoproductFormula} becomes
\begin{equation}
\Delta[y|t]=[y|t]\otimes\exp([y]_0t)+1\otimes[y|t].
\end{equation}
\end{example}
\begin{example}\label{ex:Coprod31} In depth 2, $\Delta[y_1,y_2|t_1,t_2]$ equals
\begin{multline}\label{eq:CoproductDepth2}
[y_1,y_2|t_1,t_2]\otimes\exp([y_1]_0t_1+[y_2]_0t_2)+[y_1y_2|t_1]\otimes\exp([y_1y_2]_0t_1)[y_2|t_2-t_1]\\
-[y_1y_2|t_2]\otimes\exp([y_1y_2]_0t_2)[y_1^{-1}|t_2-t_1]+[y_2|t_2]\otimes[y_1|t_1]\exp([y_2]_0t_2)+1\otimes[y_1,y_2|t_1,t_2].
\end{multline}
The coproduct of  $[y_1,y_2]_{n_1,n_2}$, is obtained as the coefficient of $t_1^{n_1-1}t_2^{n_2-1}$. For example,
\begin{equation}
\begin{aligned}
\Delta[y_1,y_2]_{3,1} = \big([y_1,y_2]_{3,1}\otimes 1  + [y_1,y_2]_{2,1}\otimes [y_1]_0 + [y_1,y_2]_{1,1}\otimes \frac{1}{2}[y_1]_0^2\big)\\
+\Big([y_1y_2]_3\otimes[y_2]_1+[y_1y_2]_2\otimes(-[y_2]_2+[y_2]_1[y_1y_2]_0)+\\
[y_1y_2]_1\otimes ([y_2]_3-[y_2]_2[y_1y_2]_0+\frac{1}{2}[y_2]_1[y_1y_2
]_0^2)\Big)\\
-[y_1y_2]_1\otimes [y_1^{-1}]_3+[y_2]_1\otimes [y_1]_3+ 1\otimes[y_1,y_2]_{3,1} 
\end{aligned}
\end{equation}
\end{example}

\begin{theorem}[Proof in Section~\ref{sec:IAndPhi}]\label{thm:HbarHopf} The coproduct $\Delta$ is coassociative and thus endows $\overline{\mathbb H}^{\symb}$ with a graded Hopf algebra structure.
\end{theorem}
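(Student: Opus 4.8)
The plan is to prove coassociativity $(\Delta \otimes \id) \circ \Delta = (\id \otimes \Delta) \circ \Delta$ not by direct manipulation of the formula in~\eqref{eq:GoncharovCoproductFormula}, which would be combinatorially hopeless, but by transporting the problem to Goncharov's Hopf algebra of iterated integrals, where coassociativity is comparatively transparent. As the excerpt announces (the structure of Sections~\ref{sec:IteratedIntegrals} and~\ref{sec:DeltaCommutesWithINV}), iterated integrals $I(a_0; a_1,\dots,a_m; a_{m+1})$ carry a coproduct of the Goncharov ``semicircle'' form, whose coassociativity follows from a clean nesting property of admissible subsequences of the marked points $\{a_0,\dots,a_{m+1}\}$. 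First I would recall this iterated-integral Hopf algebra together with its coproduct and verify (or cite) its coassociativity by the standard argument: applying the coproduct twice corresponds to choosing two nested admissible subsequences, and the two iterated applications both enumerate the same set of triples, so the two sides agree term by term.

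The second step is to realize $\overline{\mathbb H}^{\symb}$ inside (a suitable completion or polynomial-coefficient version of) the iterated-integral algebra via Goncharov's formula expressing multiple polylogarithms as iterated integrals, \emph{and} to verify that this realization is a morphism of coalgebras. Concretely, I would define a map $\Phi$ sending each generator $[y_1,\dots,y_d]_{n_1,\dots,n_d}$ to the corresponding iterated integral (with the appropriate sequence of $0$'s and inverse-argument markers), extend it to the inverted symbols using the reversal-of-order convention already built into $\overline{\mathbb H}^{\symb}$, and then show $\Delta_{\mathrm{II}} \circ \Phi = (\Phi \otimes \Phi) \circ \Delta$. Once this intertwining identity is in hand, coassociativity of $\Delta$ on the image of $\Phi$ is inherited from coassociativity of $\Delta_{\mathrm{II}}$; to conclude coassociativity on all of $\overline{\mathbb H}^{\symb}$ I would argue that $\Phi$ is injective on generators in each weight (so that an identity of elements pulls back), or, more robustly, that the coproduct formula~\eqref{eq:GoncharovCoproductFormula} is \emph{defined} to be the pullback under $\Phi$, reducing the whole statement to compatibility of $\Phi$ with the two algebra structures. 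Since $\overline{\mathbb H}^{\symb}$ is free as an algebra, it then suffices to check the coassociativity identity on generators, and the grading guarantees all sums are finite in each weight.

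The main obstacle, and the reason the excerpt isolates it into its own Section~\ref{sec:DeltaCommutesWithINV}, is the bookkeeping forced by the \emph{inverted symbols} and the sign-and-$\exp$ factors $\prod_\alpha (-1)^{j_\alpha - i_\alpha}\exp([y_{i_\alpha\to i_{\alpha+1}}]_0 t_{j_\alpha})$ appearing on the right tensor factor of~\eqref{eq:GoncharovCoproductFormula}. When one composes two coproducts, the inverted arguments $y^{-1}$ generated at the first stage must themselves be re-expanded, and one must check that the inversion operation commutes with $\Delta$ — precisely the identity that the paper flags as ``the most technical part.'' I would therefore structure the argument so that Theorem~\ref{thm:HbarHopf} depends on a single clean lemma asserting $\Delta \circ \INV = (\INV \otimes \INV)\circ \Delta$ (appropriately interpreted on the relevant subspace), prove that lemma separately by a generating-function computation in the variables $t_j - t_i$, and then feed it into the iterated-integral comparison. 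The remaining verifications — that $\Delta$ respects the grading, that the counit works, and that $\Delta$ is an algebra map (automatic once it is defined on the free generators and extended multiplicatively) — are routine and I would dispatch them briefly, reserving the detailed effort for the $\INV$-versus-$\Delta$ compatibility and the faithful intertwining by $\Phi$.
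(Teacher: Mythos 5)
Your high-level strategy --- transferring coassociativity from Goncharov's iterated-integral Hopf algebra $\mathcal I(S)$ to $\overline{\mathbb H}^{\symb}$ through a comparison map $\Phi$ --- is exactly the skeleton of the paper's proof, and your nesting argument for coassociativity of $\mathcal I(S)$ is an acceptable substitute for the paper's slicker identity $\Delta V^T=V^T\otimes V^T$ (Theorem~\ref{thm:DeltaVT}). But the key step fails as you have set it up, because your $\Phi$ points the wrong way. You map $\overline{\mathbb H}^{\symb}\to\mathcal I(S)$ by sending each generator to a normal-form iterated integral based at $0$ and $1$, and then assert the intertwining $\Delta_{\mathrm{II}}\circ\Phi=(\Phi\otimes\Phi)\circ\Delta$. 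This identity is false in the \emph{free} Hopf algebra $\mathcal I(S)$: already for $[x]_2\mapsto -I(0;x^{-1},0;1)$, the coproduct $\Delta_{\mathrm{II}}$ produces the term $I(0;x^{-1};1)\otimes I(x^{-1};0;1)$, and the right-hand factor $I(x^{-1};0;1)$ is a free generator with endpoint $x^{-1}\notin\{0,1\}$, hence not in the subalgebra generated by your normal-form integrals and not in the image of $\Phi\otimes\Phi$. The equality you want holds only modulo path-composition and reversal relations, which are \emph{not} imposed in $\mathcal I(S)$; for the same reason your fallback, defining \eqref{eq:GoncharovCoproductFormula} as ``the pullback under $\Phi$,'' is unavailable, since $\Delta_{\mathrm{II}}(\Phi(x))$ does not lie in $\mathrm{im}(\Phi\otimes\Phi)$. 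This is precisely why the paper's map goes in the opposite direction, $\Phi\colon\mathbb I^{\symb}\to\overline{\mathbb H}^{\symb}$: a general-endpoint integral \emph{can} be sent to a polynomial in polylogarithmic symbols (formulas \eqref{eq:Phi00}--\eqref{eq:PhiGeneral}, with the path-composition map $\Gamma$ satisfying $\Phi\circ\Gamma=\Phi$), the intertwining $\Phi\circ\Gamma\circ\Delta=\Delta\circ\Phi$ is then a true statement (Proposition~\ref{prop:PhiPreservesCoproduct}), and coassociativity pushes forward because $\Phi$ is \emph{surjective} --- every generator of $\overline{\mathbb H}^{\symb}$ is a $\Phi$-image (Example~\ref{ex:PhiOfPolylog}) --- so no injectivity is needed at all. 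If you insist on your direction you must pass to a quotient of $\mathcal I(S)$ where path composition and reversal hold and prove that the coproduct descends and remains coassociative there, which is essentially the work you were trying to avoid. (Separately, even where injectivity is the right tool, ``injective on generators'' is not enough: to pull back an identity in the triple tensor product you need injectivity of the full linear map $\Phi$, hence of $\Phi^{\otimes 3}$.)

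A second, distinct confusion: you wire the lemma $\Delta_{\mathbb H}\circ\INV=\INV\circ\Delta_{\overline{\mathbb H}}$ into the proof of Theorem~\ref{thm:HbarHopf}. That lemma is irrelevant here. In $\overline{\mathbb H}^{\symb}$ the inverted symbols are free generators, the coproduct \eqref{eq:GoncharovCoproductFormula} involves no inversion operation, and nothing needs to be ``re-expanded.'' The compatibility of $\INV$ with $\Delta$ (Proposition~\ref{prop:INV}, proved in Section~\ref{sec:DeltaCommutesWithINV}) is exactly what is needed \emph{later}, for Theorem~\ref{thm:HHopf}, where the coproduct on $\mathbb H^{\symb}$ is defined as $\INV\circ\Delta$ and coassociativity of $\overline{\mathbb H}^{\symb}$ is already available as an input. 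Making Theorem~\ref{thm:HbarHopf} depend on it conflates the two algebras and inverts the logical order of the paper.
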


\begin{remark}
$\overline{\mathbb H}^{\symb}$ is connected with unit and counit given by inclusion of (resp.~projection onto) $\overline{\mathbb H}^{\symb}_0=\Q$. The antipode is given in Section~\ref{sec:Antipode}.
\end{remark}

\subsection{Goncharov's inversion formula and the coproduct on \texorpdfstring{$\mathbb H^{\symb}$}{Hsymb}}
In  \cite[Section~2.6]{Goncharov_MultiplePolylogarithmsAndMixedTateMotives}, Goncharov proved a relation between $\Li(y_1,\dots,y_d|t_1,\dots,t_d)$ and $\Li(y_d^{-1},\dots,y_1^{-1}|-t_d,\dots,-t_1)$. Motivated by this we define a map $\INV\colon\overline{\mathbb H}^{\symb}\to\mathbb H^{\symb}$, which fixes regular symbols and is defined inductively on inverted symbols by the power series formula
\begin{equation}\label{eq:GoncharovInversionFormula}
\begin{aligned}
&\INV([y_d^{-1},\cdots,y_1^{-1}|-t_d,\cdots,-t_1])\\
&=\sum_{j=0}^{d-1}(-1)^{d-1+j}\INV([y_j^{-1},\cdots,y_1^{-1}|-t_j,\cdots,-t_1])[y_{j+1},\cdots,y_d|t_{j+1},\cdots,t_d]\\
&+\sum_{j=1}^d\frac{(-1)^{d-1+j}}{t_j}\INV([y_{j-1}^{-1},\cdots,y_1^{-1}|-t_{j-1},\cdots,-t_1])[y_{j+1},\cdots,y_d|t_{j+1},\cdots,t_d]\\
&+\sum_{j=1}^d\Big(\frac{(-1)^{d+j}}{t_j}\INV([y_{j-1}^{-1},\cdots,y_1^{-1}|t_j-t_{j-1},\cdots,t_j-t_1])\\&\phantom{abcsdfsdfsdfssd}\exp([y_{1\to d+1}]_0t_j)[y_{j+1},\cdots,y_d|t_{j+1}-t_{j},\cdots,t_d-t_j]\Big)
\end{aligned}
\end{equation}
The induction starts with $\INV([y^{-1}|-t])=[y|t]+\frac{\exp([y]_0t)-1}{t}$.
\begin{example}\label{ex:INVDepth1}
In depth 1, to compute $\INV[y^{-1}]_n$ we extract the $t^{(n-1)}$ term of the power series.
\begin{equation}
\INV[y^{-1}]_n = (-1)^{n-1}[y]_{n} + \frac{(-1)^{n-1}}{n!}[y]_0^n
\end{equation}
\end{example}
\begin{example}
In depth 2, $\INV([y_2^{-1},y_1^{-1}|-t_2,-t_1])$ equals
\begin{multline}
-[y_1,y_2|t_1,t_2]+\INV([y_1^{-1}|-t_1])[y_2|t_2]+\frac{1}{t_1}[y_2|t_2]-\frac{1}{t_2}\INV([y_1^{-1}|-t_1])\\-\frac{1}{t_1}\exp([y_1y_2]_0t_1)[y_2|t_2-t_1]+\frac{1}{t_2}\INV([y_1^{-1}|t_2-t_1])\exp([y_1y_2]_0t_2).
% -\frac{1}{t_1}\exp([y_1y_2]_0t_1)[y_2|t_2-t_1]+\frac{1}{t_2}\left([y_1|t_1-t_2]+\frac{\exp([y_1]_0(t_1-t_2))-1}{t_1-t_2}\right)\exp([y_1 y_2]_0 t_2)
 \end{multline}
%\end{example}
% \begin{example}
Using Example~\ref{ex:INVDepth1} we obtain (by extracting the coefficient of $t_1^2$) that $\INV([y_2^{-1},y_1^{-1}]_{1,3})$ equals 
\begin{multline}
 -[y_1,y_2]_{3,1}+\left([y_1]_3+\frac{1}{6}[y_1]_0^3\right)[y_2]_1-\left(\frac{1}{6}[y_2]_1[y_1y_2]_0^3-\frac{1}{2}[y_2]_2[y_1y_2]_0^2+[y_2]_3[y_1y_2]_0-[y_2]_4\right)\\
+\left(\left([y_1]_3+\frac{1}{6}[y_1]_0^3\right)[y_1y_2]_0-3 \left([y_1]_4+\frac{1}{24}[y_1]_0^4\right)\right).
\end{multline}
\end{example}
The result below states that we obtain a coproduct $\Delta$ on $\mathbb H^{\symb}$ by applying $\INV$ to the inverted terms of $\Delta$.
\begin{theorem}[Proof in Section~\ref{sec:DeltaCommutesWithINV}]\label{thm:HHopf} The restriction of $\INV\circ\Delta$ to $\mathbb H^{\symb}$ is coassociative and endows $\mathbb H^{\symb}$ with a graded Hopf algebra structure.
\end{theorem}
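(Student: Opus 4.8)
The plan is to derive the coassociativity of $\INV\circ\Delta$ on $\mathbb H^{\symb}$ from the already-established coassociativity of $\Delta$ on $\overline{\mathbb H}^{\symb}$ (Theorem~\ref{thm:HbarHopf}), by exhibiting $\INV$ as a morphism of coalgebras rather than proving coassociativity from scratch. Concretely, I would first show that $\INV\colon\overline{\mathbb H}^{\symb}\to\mathbb H^{\symb}$ is compatible with the coproducts, i.e.~that the diagram relating $\Delta$ on $\overline{\mathbb H}^{\symb}$ and the candidate coproduct $\INV\circ\Delta$ on $\mathbb H^{\symb}$ commutes. The essential identity to establish is
\begin{equation}
(\INV\otimes\INV)\circ\Delta=\Delta\circ\INV\quad\text{on }\overline{\mathbb H}^{\symb},
\end{equation}
which is precisely the statement that ``$\INV$ commutes with $\Delta$'' promised in Section~\ref{sec:DeltaCommutesWithINV}. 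Granting this, coassociativity on $\mathbb H^{\symb}$ follows formally: writing $\overline\Delta=\Delta$ on $\overline{\mathbb H}^{\symb}$ and $\Delta^{\symb}=\INV\circ\Delta|_{\mathbb H^{\symb}}$, one checks $(\Delta^{\symb}\otimes\id)\circ\Delta^{\symb}=(\id\otimes\Delta^{\symb})\circ\Delta^{\symb}$ by inserting the intertwining identity and using coassociativity of $\overline\Delta$.

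The key steps, in order, are as follows. First I would record that $\INV$ fixes regular symbols, so that its restriction to the regular subalgebra $\mathbb H^{\symb}\subset\overline{\mathbb H}^{\symb}$ is the identity, and that $\Delta$ sends $\mathbb H^{\symb}$ into $\overline{\mathbb H}^{\symb}\otimes\overline{\mathbb H}^{\symb}$ (inverted symbols can appear in the output, as seen in Example~\ref{ex:Coprod31}); hence $\INV\circ\Delta$ lands in $\mathbb H^{\symb}\otimes\mathbb H^{\symb}$ after applying $\INV$ to both tensor factors, which is well-defined by the intertwining identity. Second, I would prove the intertwining identity~(above) by induction on depth, using the inductive definition~\eqref{eq:GoncharovInversionFormula} of $\INV$ together with the explicit depth-graded form of the Goncharov coproduct~\eqref{eq:GoncharovCoproductFormula}. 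Third, with the intertwining identity in hand, I would verify coassociativity purely diagrammatically. Fourth, I would note that the grading, unit, counit, and the antipode (from Section~\ref{sec:Antipode}) descend, so that $\mathbb H^{\symb}$ inherits a full graded Hopf algebra structure.

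The main obstacle is unquestionably the intertwining identity $(\INV\otimes\INV)\circ\Delta=\Delta\circ\INV$, which is the technical heart deferred to Section~\ref{sec:DeltaCommutesWithINV}. The difficulty is that both $\Delta$ and $\INV$ are defined by intricate generating-series formulas involving sums over increasing sequences of indices and shifted parameters $t_j-t_{j'}$, so a direct term-by-term comparison is combinatorially heavy. My strategy would be to work entirely at the level of Goncharov's generating series $[\mathbf y|\mathbf t]$, where both operations have clean closed forms, and to translate the inversion formula~\eqref{eq:GoncharovInversionFormula} into a statement about the generating series of iterated integrals. The cleanest route—and the one the paper's structure suggests in Section~\ref{sec:IteratedIntegrals}—is to realize $\INV$ as coming from Goncharov's genuine \emph{inversion formula} for iterated integrals under the Hopf-algebra morphism identifying multiple polylogarithms with iterated integrals; since the inversion relation holds for the actual integrals and the correspondence is a morphism of Hopf algebras, the symbolic intertwining identity is forced. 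This reduces the heavy bookkeeping to an appeal to the coassociativity of the iterated-integral coproduct, which is elementary to verify directly.

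The remaining verifications are then routine. Compatibility with the grading is immediate since both $\Delta$ and $\INV$ preserve weight (the power-series extraction of the coefficient of $t_1^{n_1-1}\cdots t_d^{n_d-1}$ respects the total degree). The counit and unit axioms follow from connectedness, and the antipode is obtained from the general formula for connected graded Hopf algebras, as indicated in the remark following Theorem~\ref{thm:HbarHopf}. Thus the entire statement of Theorem~\ref{thm:HHopf} hinges on the single intertwining identity, and the art lies in choosing the iterated-integral formulation so that this identity becomes a consequence of an established relation rather than a brute-force generating-series manipulation.
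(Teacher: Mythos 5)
The formal skeleton of your proposal---deduce coassociativity of $\INV\circ\Delta$ on $\mathbb H^{\symb}$ from Theorem~\ref{thm:HbarHopf} together with the intertwining identity $\Delta_{\mathbb H}\circ\INV=\INV\circ\Delta_{\overline{\mathbb H}}$, which is exactly Proposition~\ref{prop:INV}---is the same as the paper's, and your diagram chase deriving coassociativity from these two inputs is correct. The genuine gap is in your proposed proof of the intertwining identity itself. You argue that ``since the inversion relation holds for the actual integrals and the correspondence is a morphism of Hopf algebras, the symbolic intertwining identity is forced.'' That inference is invalid: $\overline{\mathbb H}^{\symb}$, $\mathbb H^{\symb}$, and $\mathcal I(S)$ are \emph{free} algebras, and the realization maps into multivalued functions are very far from injective---for instance the shuffle relation $I(0;a;1)I(0;b;1)=I(0;a,b;1)+I(0;b,a;1)$ holds analytically but is not imposed in $\mathcal I(S)$. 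Hence no identity of functions forces an identity of symbols. The correction terms in the definition~\eqref{eq:GoncharovInversionFormula} of $\INV$ (e.g.\ the term $\frac{(-1)^{n-1}}{n!}[y]_0^n$ in Example~\ref{ex:INVDepth1}) encode precisely this analytic content, and that content is absent from the free Hopf algebra of iterated integrals, where path reversal, path composition, and change of variables are not relations.

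Concretely, to run your argument you would need a symbolic map $\iota\colon\mathbb I^{\symb}\to\mathbb I^{\symb}$ satisfying (i) $(\iota\otimes\iota)\circ\Delta=\Delta\circ\iota$ and (ii) $\Phi\circ\iota=\INV\circ\Phi$, and no such map is exhibited or readily available. The natural candidate, substitution of inverted arguments $a_i$, does commute with the coproduct~\eqref{eq:IteratedIntegralCoproduct} (which is natural in the entries), but it fails (ii) badly: it does not even preserve the polylogarithmic subalgebra of Definition~\ref{def:PolylogIntegral} in general, and where the comparison makes sense it sends an integral realizing a regular generator to one realizing an \emph{inverted} generator, whereas $\INV\circ\Phi$ fixes regular symbols---distinct elements of a free algebra. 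Note also that coassociativity of the iterated-integral coproduct (Theorem~\ref{thm:DeltaVT}, Corollary~\ref{cor:Coassociative}) enters only in the proof of Theorem~\ref{thm:HbarHopf}; it says nothing about $\INV$, so your claim that the bookkeeping ``reduces to'' that coassociativity is unfounded. This is why the paper proves Proposition~\ref{prop:INV} in Section~\ref{sec:DeltaCommutesWithINV} by induction on depth: the inductive structure of~\eqref{eq:GoncharovInversionFormula} reduces the statement to the identity~\eqref{eq:Claim}, which is then verified by a direct generating-series computation (showing $\Delta(B)=T_1$, $\Delta(C)=T_4$, $T_2=T_3$, whence $\INV\circ\Delta(A+B-C)=0$). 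That computation, or something of comparable substance, cannot be bypassed by appealing to the analytic inversion formula.
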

\begin{remark}\label{rm:DeltaSubscripts} We denote both the coproduct on $\mathbb H^{\symb}$ and that on $\overline{\mathbb H}^{\symb}$ by $\Delta$. When distinction is needed we write $\Delta_{\mathbb H}$, respectively $\Delta_{\overline{\mathbb H}}$.
\end{remark}
% \begin{theorem}\label{thm:HHopf} The coproduct $\Delta\colon\mathbb H^{\symb}\to\mathbb H^{\symb}\otimes\mathbb H^{\symb}$ defined by $\Delta([x_i]_0)=[x_i]_0\otimes 1+1\otimes [x_i]_0$ and
% \begin{equation}\label{eq:CoproductFormulaINV}
% \begin{aligned}
%     \Delta([\mathbf y|\mathbf t])=\sum&[y_{i_1\to i_2},\dots,y_{i_k\to i_{k+1}}|t_{j_1},\dots t_{j_k}]\bigotimes\\
%     \prod_{\alpha=0}^k(-1)^{j_\alpha-i_\alpha}\exp(&[y_{i_\alpha\to i_{\alpha+1}}]_0t_{j_\alpha})\Big(\INV([y_{j_\alpha-1}^{-1},y_{j_\alpha-2}^{-1},\dots,y_{i_\alpha}^{-1}|t_{j_\alpha}-t_{j_\alpha-1},\dots, t_{j_\alpha}-t_{i_\alpha}])\\
%     &\cdot[y_{j_\alpha+1},y_{j_\alpha+2},\dots,y_{i_{\alpha+1}-1}|t_{j_\alpha+1}-t_{j_\alpha},\dots t_{i_{\alpha+1}-1}-t_{j_\alpha}]\Big)
% \end{aligned}
% \end{equation}
% endows $\mathbb H^{\symb}$ with a graded Hopf algebra structure (notation as in Definition~\ref{def:GoncharovCoproduct}).
% \end{theorem}
\begin{example}\label{ex:CoprodNoInv31}
From Example \ref{ex:Coprod31} we simply apply $\INV$ to the single inverted term.
\begin{equation}
\begin{aligned}
\Delta[y_1,y_2]_{3,1} = \big([y_1,y_2]_{3,1}\otimes 1  + [y_1,y_2]_{2,1}\otimes [y_1]_0 + [y_1,y_2]_{1,1}\otimes \frac{1}{2}[y_1]_0^2\big)\\
+\Big([y_1y_2]_3\otimes[y_2]_1+[y_1y_2]_2\otimes(-[y_2]_2+[y_2]_1[y_1y_2]_0)+\\
\hspace*{3pc}[y_1y_2]_1\otimes ([y_2]_3-[y_2]_2[y_1y_2]_0+\frac{1}{2}[y_2]_1[y_1y_2]_0^2)\Big)\\
-[y_1y_2]_1\otimes ([y_1]_3+\frac{1}{6}[y_1]_0^3)+[y_2]_1\otimes [y_1]_3+ 1\otimes[y_1,y_2]_{3,1} 
\end{aligned}
\end{equation}
\end{example}

\begin{remark}
Note that the only source of denominators is the exponential map, so it follows that the Lie coalgebra of indecomposables is defined over $\Z$.
\end{remark}

\subsection{Associated Hopf algebras}\label{sec:ContractionHopfAlgebras}
One can use $\mathbb H^{\symb}$ to produce other Hopf algebras in a natural way. To see this we introduce some terminology. For $\mathbf i = (i_1,\dots,i_{d+1})$ a strictly increasing sequence with $i_{d+1}\leq n+1$ define 
\begin{equation}\label{eq:Contraction}
    \mathbf i(x_1,\dots,x_n)=%(\prod_{r=i_1}^{i_2-1} x_r,\dots,\prod_{r=i_d}^{i_{d+1}-1}x_r)=
    (x_{i_1\rightarrow i_2},\dots,x_{i_d\rightarrow i_{d+1}}).
\end{equation}
We thus think of $\mathbf i$ as defining a \emph{contraction} from depth $n$ to depth $d$. Note that if $\mathbf j = (j_1,\dots,j_{f+1})$ defines contraction from depth $d$ to depth $f$ the sequence
\begin{equation}
    \mathbf i|\mathbf j = ( i_{j_1},\dots,i_{j_{f+1}}),
\end{equation}
defines a contraction from depth $n$ to depth $f$, and we have
\begin{equation}
    \mathbf j(\mathbf i(x_1,\dots,x_n))=(\mathbf i|\mathbf j)(x_1,\dots,x_n).
\end{equation}

We shall formalize this property of contractions below.
\begin{definition}\label{def:ContractionSystem} A \emph{contraction system} is a collection of sets $X_1$, $X_2$, $\dots$ together with contraction maps $\mathbf i\colon X_n\to X_d$ (one for each $\mathbf i$ as above) satisfying $\mathbf j\circ\mathbf i=\mathbf i\vert\mathbf j\colon X_n\to X_f$.
\end{definition}
Note that if $X=(x_1,\dots,x_n)$ we have
\begin{equation}
[(i_1,i_2)X]_0+[(i_2,i_3)X]_0=[(i_1,i_3)X]_0\in\mathbb H^{\symb}.
\end{equation}

\begin{definition}
For any contraction system $X$ the \emph{free contraction algebra}, $\mathbb{H}(X)$, is generated by symbols $[\alpha]_{n_1,\dots,n_d}$ with $\alpha\in X_d$ and $[\alpha]_0$ with $\alpha\in X_1$ modulo the relation
\begin{equation}\label{eq:ZeroRel}
    [(i_1,i_2)\alpha]_0+[(i_2,i_3)\alpha]_0=[(i_1,i_3)\alpha]_0.
\end{equation}
\end{definition}
For each $\alpha \in X_n$ define an \emph{evaluation at $\alpha$} by
\begin{equation}\label{eq:evalphaDef}
    \ev_\alpha([\mathbf i(x_1,\dots,x_n)]_{\mathbf n})=[\mathbf i(\alpha)]_{\mathbf n}\in \mathbb H(X),
\end{equation}
with $\mathbf n$ denoting either a vector $(n_1,\dots,n_d)$ or $0$. 
\begin{observation}
The coproduct of $[x_1,\dots,x_n]_{k_1,\dots,k_n}\in\mathbb H^{\symb}$ can be expressed entirely in terms of contractions of $(x_1,\dots,x_n)$.
\end{observation}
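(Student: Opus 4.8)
The plan is to read the claim directly off the two explicit coproduct formulas, verifying factor by factor that every generator that occurs is a \emph{contraction symbol} $[\mathbf i(x_1,\dots,x_n)]_{\mathbf n}$ (or $[\mathbf i(x_1,\dots,x_n)]_0$) for some strictly increasing $\mathbf i$. Since the coproduct of the single generator $[x_1,\dots,x_n]_{k_1,\dots,k_n}$ is a sum of tensor products in which each tensor factor is a product of generators, it suffices to check that every generator appearing is of this contraction form.

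First I would treat the coproduct $\Delta_{\overline{\mathbb H}}$ on $\overline{\mathbb H}^{\symb}$ given by Goncharov's formula~\eqref{eq:GoncharovCoproductFormula}. The left tensor factor $[y_{i_1\to i_2},\dots,y_{i_k\to i_{k+1}}|t_{j_1},\dots,t_{j_k}]$ is by definition the contraction of $(y_1,\dots,y_d)$ by $\mathbf i=(i_1,\dots,i_{k+1})$. Each factor of the right tensor slot is one of three types: an exponential $\exp([y_{i_\alpha\to i_{\alpha+1}}]_0 t_{j_\alpha})$, whose argument $[y_{i_\alpha\to i_{\alpha+1}}]_0=\sum_{r=i_\alpha}^{i_{\alpha+1}-1}[y_r]_0$ is a depth-$1$ contraction via the relation~\eqref{eq:ZeroRel}; an inverted symbol $[y_{j_\alpha-1}^{-1},\dots,y_{i_\alpha}^{-1}|\dots]$ running over the \emph{consecutive} block $y_{i_\alpha},\dots,y_{j_\alpha-1}$, which is the inverse of the contraction by $(i_\alpha,i_\alpha+1,\dots,j_\alpha)$; and a regular symbol $[y_{j_\alpha+1},\dots,y_{i_{\alpha+1}-1}|\dots]$ on the consecutive block $y_{j_\alpha+1},\dots,y_{i_{\alpha+1}-1}$, the contraction by $(j_\alpha+1,\dots,i_{\alpha+1})$. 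Thus every factor of $\Delta_{\overline{\mathbb H}}[\mathbf y|\mathbf t]$ is a power of a regular or inverted contraction symbol.

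It then remains to pass to $\mathbb H^{\symb}$, whose coproduct is $\INV\circ\Delta_{\overline{\mathbb H}}$. I would show that $\INV$ carries an inverted contraction symbol to a $\Z$-linear combination of products of regular contraction symbols, arguing by induction on depth via the inversion formula~\eqref{eq:GoncharovInversionFormula}. An inverted symbol on a consecutive block $y_a,\dots,y_b$ can be relabeled $z_r=y_{a-1+r}$; the formula then produces only shorter inverted symbols on initial subblocks $z_1,\dots,z_m$ (handled by the inductive hypothesis), regular symbols $[z_{j+1},\dots]$ on consecutive subblocks, and exponentials $\exp([z_{1\to m+1}]_0 t_j)=\exp([y_{a\to b+1}]_0 t_j)$ whose argument $[y_{a\to b+1}]_0=\sum_{r=a}^{b}[y_r]_0$ is again a depth-$1$ contraction. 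Since each relabeled block is a consecutive subblock of $y_a,\dots,y_b$, every symbol produced is a contraction in the original variables, and the induction closes.

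The point requiring the most care, and the main structural obstacle, is precisely the bookkeeping that the inverted symbols arising both in Goncharov's formula and in the inversion recursion always run over \emph{consecutive} blocks of variables. This is what guarantees that the contracted variable $y_{a\to b+1}=\prod_{r=a}^{b}y_r$ is a genuine single product defining a contraction, that the relabeling $z_r=y_{a-1+r}$ keeps us inside the contraction system, and hence that the induction on depth is legitimate. Everything else in the proof is a direct reading of the two displayed formulas.
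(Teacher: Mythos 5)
Your proposal is correct and coincides with the paper's (implicit) justification: the paper states this as an Observation with no proof, treating it as a direct reading of the coproduct formula and the inversion recursion, which is exactly the verification you carry out. Your factor-by-factor inspection of~\eqref{eq:GoncharovCoproductFormula} together with the depth induction through~\eqref{eq:GoncharovInversionFormula} --- keyed to the crucial point that every block that arises (including inside $\INV$) is a \emph{consecutive} block of the original variables --- supplies precisely the details the paper leaves unstated.
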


\begin{theorem}
The free contraction algebra $\mathbb{H}(X)$ has a natural structure as a graded Hopf algebra with coproduct $\Delta_X$ induced by the coproduct on $\mathbb{H}^{\symb}$. Formally,
\begin{equation}
\Delta_X([\alpha]_0)=1\otimes[\alpha]_0+[\alpha]_0\otimes 1,\qquad \Delta_X([\alpha]_{n_1,\dots,n_d})=\ev_\alpha(\Delta([x_1,\dots,x_d]_{n_1,\dots,n_d}).
\end{equation}
\end{theorem}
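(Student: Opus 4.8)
The plan is to produce $\Delta_X$ as the image of $\Delta$ under the evaluation maps $\ev_\alpha$, and to deduce all the Hopf-algebra axioms for $\mathbb{H}(X)$ from the corresponding axioms for $\mathbb{H}^{\symb}$ established in Theorem~\ref{thm:HHopf}. First note that $\mathbb{H}(X)$ is a graded connected commutative $\Q$-algebra by construction (every generator has positive weight, so $\mathbb{H}(X)_0=\Q$); hence once we know it is a graded bialgebra it is automatically a Hopf algebra, the antipode being supplied by connectedness, and only coassociativity and counitality require real work. Write $\mathbb{H}^{\symb}(n)$ for the subalgebra of $\mathbb{H}^{\symb}$ generated by the symbols $[\mathbf i(x_1,\dots,x_n)]_{\mathbf m}$ and $[(i,j)(x_1,\dots,x_n)]_0$ attached to contractions of $(x_1,\dots,x_n)$; the Observation above guarantees this is closed under $\Delta$, i.e.\ is a sub-Hopf-algebra. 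For each $\alpha\in X_n$ I would first check that the rule~\eqref{eq:evalphaDef} extends to a well-defined algebra homomorphism $\ev_\alpha\colon\mathbb{H}^{\symb}(n)\to\mathbb{H}(X)$: the symbols $[\mathbf i(x_1,\dots,x_n)]_{\mathbf m}$ freely generate $\mathbb{H}^{\symb}(n)$ apart from the weight-$1$ telescoping relations, so the only point to verify is that the relation $[x_{i\to k}]_0=[x_{i\to j}]_0+[x_{j\to k}]_0$ maps to the defining relation~\eqref{eq:ZeroRel}, which is immediate.

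The heart of the argument is the intertwining identity
\begin{equation}\label{eq:intertwine}
\Delta_X\circ\ev_\alpha=(\ev_\alpha\otimes\ev_\alpha)\circ\Delta\qquad\text{on }\mathbb{H}^{\symb}(n),\quad\alpha\in X_n.
\end{equation}
It suffices to verify this on a generator $w=[\mathbf i(x_1,\dots,x_n)]_{\mathbf m}$, writing $z_r=x_{i_r\to i_{r+1}}$ so that $w=[z_1,\dots,z_d]_{\mathbf m}$. Since the coproduct formula is uniform in the variable names, the Observation applied to the depth-$d$ symbol $[z_1,\dots,z_d]_{\mathbf m}$ shows that $\Delta(w)$ is a sum of tensor products of symbols attached to contractions $\mathbf j$ of $(z_1,\dots,z_d)$, together with weight-$1$ factors $[z_{j\to k}]_0$ coming from the exponentials. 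The composition law of Definition~\ref{def:ContractionSystem}, in the form $\mathbf j\bigl(\mathbf i(x_1,\dots,x_n)\bigr)=(\mathbf i\vert\mathbf j)(x_1,\dots,x_n)$, identifies each such contraction of $(z_1,\dots,z_d)$ with the contraction $(\mathbf i\vert\mathbf j)$ of $(x_1,\dots,x_n)$; applying $\ev_\alpha$ therefore replaces the corresponding symbol by the one attached to $(\mathbf i\vert\mathbf j)(\alpha)=\mathbf j(\mathbf i(\alpha))$. This is exactly the output of $\ev_{\mathbf i(\alpha)}$ applied to the abstract coproduct of $[z_1,\dots,z_d]_{\mathbf m}$, namely of $\Delta_X([\mathbf i(\alpha)]_{\mathbf m})$, where one also uses~\eqref{eq:ZeroRel} to make the weight-$1$ factors $[z_{j\to k}]_0$ transform consistently. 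This establishes~\eqref{eq:intertwine}.

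With~\eqref{eq:intertwine} in hand the remaining axioms follow formally. Well-definedness of $\Delta_X$ on the quotient by~\eqref{eq:ZeroRel} is clear, since $\Delta_X([\alpha]_0)=1\otimes[\alpha]_0+[\alpha]_0\otimes 1$ is primitive and~\eqref{eq:ZeroRel} is a linear relation among primitives, hence preserved. For coassociativity, fix a generator $[\mathbf i(\alpha)]_{\mathbf m}=\ev_\alpha(w)$ of $\mathbb{H}(X)$ with $w\in\mathbb{H}^{\symb}(n)$. Because $\mathbb{H}^{\symb}(n)$ is closed under $\Delta$, both $(\Delta\otimes\id)\Delta(w)$ and $(\id\otimes\Delta)\Delta(w)$ lie in $\mathbb{H}^{\symb}(n)^{\otimes 3}$, and applying $\ev_\alpha^{\otimes 3}$ and~\eqref{eq:intertwine} twice converts the equality $(\Delta\otimes\id)\Delta=(\id\otimes\Delta)\Delta$ of Theorem~\ref{thm:HHopf} into $(\Delta_X\otimes\id)\Delta_X=(\id\otimes\Delta_X)\Delta_X$; since the $\ev_\alpha$ are surjective onto generators this gives coassociativity on all of $\mathbb{H}(X)$. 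Counitality is obtained in the same way by transporting the counit of $\mathbb{H}^{\symb}$ through $\ev_\alpha$. The main obstacle is the bookkeeping in~\eqref{eq:intertwine}: one must track the weight-$1$ exponential factors $[z_{j\to k}]_0$ and the inverse terms processed by $\INV$ through the contraction composition, confirming that they too are governed solely by $\mathbf i\vert\mathbf j$ and by the additivity relation~\eqref{eq:ZeroRel}, so that the two evaluations coincide term by term.
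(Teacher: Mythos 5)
Your proposal is correct and follows essentially the same route as the paper: both hinge on the intertwining identity $\Delta_X\circ\ev_\alpha=\ev_\alpha\circ\Delta$, derived from the contraction composition law $\mathbf j(\mathbf i(\alpha))=(\mathbf i\vert\mathbf j)(\alpha)$ together with the Observation that the coproduct is expressible entirely in contractions, after which coassociativity of $\Delta_X$ is inherited from that of $\Delta$. The additional checks you supply (well-definedness of $\ev_\alpha$ and of $\Delta_X$ modulo the telescoping relation, counitality, and the antipode via connectedness) are details the paper leaves implicit, not a different argument.
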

\begin{proof} We must prove that $\Delta_X$ is coassociative. Since $\mathbf j\mathbf i\alpha=(\mathbf i\vert\mathbf j)\alpha$, we have
\begin{equation}
    \ev_{\mathbf i\alpha}([\mathbf j(x_1,\dots,x_d)]_{\mathbf n})=[\mathbf j\mathbf i\alpha]_{\mathbf n}=[(\mathbf i\vert\mathbf j)\alpha]_{\mathbf n}=\ev_\alpha([(\mathbf i\vert\mathbf j)(x_1,\dots,x_n)]_\mathbf n)=\ev_\alpha([\mathbf j\mathbf i(x_1,\dots,x_n)]_{\mathbf n}).
\end{equation}
Since this holds for all $\mathbf j$ we have
\begin{equation}
    \ev_{\mathbf i\alpha}(\Delta[x_1,\dots,x_d]_\mathbf n)=\ev_\alpha(\Delta[(\mathbf i(x_1,\dots,x_n)]_\mathbf n).
\end{equation}
Hence,
\begin{equation}
    \Delta_X\ev_\alpha([\mathbf i(x_1,\dots,x_n)]_{\mathbf n})=\Delta_X([\mathbf i\alpha]_{\mathbf n})=\ev_{\mathbf i\alpha}(\Delta([x_1,\dots,x_d]_{\mathbf n}))=\ev_\alpha\Delta([\mathbf i(x_1,\dots,x_n)]_{\mathbf n}).
\end{equation}
Since this holds for all $\mathbf i$, we have $\Delta_X\circ\ev_\alpha=\ev_{\alpha}\circ\Delta$. From this we see that coassociativity of $\Delta$ implies that of $\Delta_X$.
\end{proof}

\begin{example}\label{eq:FieldEx}
Let $F$ be a field and define 
\begin{equation}
    X_k=\left\{(x_1,\dots,x_k)\in F^k\bigm\vert x_i\neq 0,\, \prod_{r=i}^j x_r\neq 1, \text{ for all } 1\leq i\leq j\leq k \right\}.
\end{equation}
This is a contraction system with contraction maps defined as in~\eqref{eq:Contraction}. The corresponding Hopf algebra is denoted $\mathbb H^{\symb}(F)$.
\end{example}
\begin{example}\label{eq:FieldWithExtEx}
Let $F$ be a field and let $\pi\colon E\to F^*$ be a torsion free extension of $F^*$ by $\Z$. Define 
\begin{equation}
    X_k=\left\{(u_i,v_{i,j})\in E^{k+\binom{k+1}{2}}\bigm\vert \pi(\sum_{r=i}^j u_i)+\pi(v_{ij})=1 \text{ for all } 1\leq i\leq j\leq k\right\}
\end{equation}
and contraction maps given by
\begin{equation}\label{eq:UpstairsContractions}
    \mathbf i^*(u_s)=\sum_{r=i_s}^{i_{s+1}-1}u_r,\qquad \mathbf i^*(v_{r,s})=v_{i_r,i_{s+1}-1}.
\end{equation}
In particular, if $F=\C$ and $\pi\colon\C\to\C^*$ is the exponential map, we have contraction maps
\begin{equation}\label{eq:ContractionSdhat}
    \mathbf i\colon\widehat S_n(\C)\to\widehat S_d(\C).
\end{equation}
The corresponding Hopf algebra is denoted $\widehat{\mathbb H}_{E}^{\symb}(F)$. 
\end{example}

\begin{example}\label{ex:ComplexManifold}
Let $M$ be a smooth complex manifold and let $U$ be an open subset of $M$. If we let 
$X_k=\Omega^0(U,\widehat S_k(\C))$,
the set of holomorphic maps from $U$ to $\widehat S_k(\C)$, with contraction maps induced by~\eqref{eq:ContractionSdhat}, we obtain a sheaf $\widehat{\mathbb H}^{\symb}_M$ of Hopf algebras on $M$. %The corresponding sheaf of Lie coalgebras is denoted $\widehat{\mathbb L}^{\symb}_M$.
\end{example}

\begin{example} A semisimplicial set defines a contraction system with $\mathbf i$ being induced by the morphism $[d]\to[n]$ in the simplex category taking $k$ to $i_{k+1}-1$. %Note that we throw away $S_0$.
\end{example}

\subsection{The variation matrix}\label{sec:symbolicVariationMatrix}
Let $\Z^{\infty}_{\geq 0}$ be the union of all $\Z^{\ell}_{\geq 0}$ with all zero vectors identified. The polylog generators $[x_{i_1\to i_2},\dots,x_{i_d\to i_{d+1}}]_{n_1,\dots,n_d}$ together with 1 are in natural one-one-correspondence with $\Z^{\infty}_{\geq 0}$. Namely, $[x_{i_1\to i_2},\dots,x_{i_d\to i_{d+1}}]_{n_1,\dots,n_d}$ corresponds to
\begin{equation}
    (0^{i_1-1},n_1,0^{i_2-i_1-1},n_2,\dots,0^{i_d-i_{d-1}-1},n_d,0^{i_{d+1}-i_d-1})
\end{equation}
and $1$ corresponds to 0. We shall thus when convenient identify a vector with its corresponding generator and vice versa. For $\mathbf k=(k_1,\dots,k_\ell)\in\Z^\infty_{\geq 0}$, let $\| \mathbf k\|=\sum k_i$ and for $\mathbf k\neq 0$ let $\dim(\mathbf k)=\ell$.
\begin{definition}\label{def:ZinfOrder}
We endow $\Z^{\infty}_{\geq 0}$ with the ordering defined by $\mathbf k\prec\mathbf l$ if
\begin{itemize}
    \item $\|\mathbf k\|<\|\mathbf l\|$
    \item or if $\|\mathbf k\|=\|\mathbf l\|$ and $\dim(\mathbf k)<\dim(\mathbf l)$
    \item or if $\|\mathbf k\|=\|\mathbf l\|$ and $\dim(\mathbf k)=\dim(\mathbf l)$ and the rightmost nonzero entry of $\mathbf l-\mathbf k$ is negative.
\end{itemize}
\end{definition}
\begin{example}
\begin{equation}0\prec (0,1)\prec (1,0)\prec (0,2)\prec(1,1)\prec(2,0)\prec(0,3)\prec(1,2)
\end{equation}
corresponds to
\begin{equation}
    1\prec\Li_1(x_2)\prec\Li_1(x_1x_2)\prec\Li_2(x_2)\prec\Li_{1,1}(x_1,x_2)\prec\Li_2(x_1x_2)\prec\Li_3(x_2)\prec\Li_{1,2}(x_1,x_2).
\end{equation}
\end{example}
\begin{definition}\label{def:variation matrix} The \emph{variation matrix} is the matrix $V$ with rows and columns parameterized by $\Z^{\infty}_{\geq 0}$ defined by
\begin{equation}
  \Delta(v)=\sum_{w\in \Z^{\infty}_{\geq 0}} w\otimes V_{v,w}
\end{equation}
\end{definition}
Note that $V_{v,w}=0$ if $\dim(v)\neq\dim(w)$ or if $v\prec w$. Hence, there are only finitely many entries in each row and column. 

\begin{example} By Example~\ref{ex:CoprodNoInv31} we see that the non-zero entries in the row corresponding to $[x_1,x_2]_{3,1}$ are 
\begin{multline}
     [x_1,x_2]_{3,1},\quad [x_1]_3,\quad [x_2]_3-[x_1]_3-[x_2]_2[x_1x_2]_0+\frac{1}{2}[x_2]_1[x_1x_2]_0^2-\frac{1}{6}[x_1]_0^3,\\\frac{1}{2}[x_1]_0^2,\quad -[x_2]_2+[x_2]_1[x_1x_2]_0,\quad[x_1]_0,\quad[x_2]_1,\quad 1
\end{multline}
corresponding to the columns $1$, $[x_2]_1$, $[x_1x_2]_1$, $[x_1,x_2]_{1,1}$, $[x_1x_2]_2$, $[x_1,x_2]_{2,1}$, $[x_1x_2]_3$, and $[x_1,x_2]_{3,1}$.
\end{example}

The coproduct of a matrix is defined entrywise, and the tensor product is defined by the usual formula for matrix multiplication. The following result is crucial.
\begin{theorem}
The variation matrix satisfies $\Delta(V^T)=V^T\otimes V^T$. 
\end{theorem}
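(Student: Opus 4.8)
The plan is to recognize the identity as a repackaging of the coassociativity of $\Delta$ on $\mathbb H^{\symb}$ (Theorem~\ref{thm:HHopf}). First I would unpack the matrix statement entrywise. With the conventions that $(V^T)_{a,b}=V_{b,a}$ and that the tensor product of matrices is matrix multiplication with $\otimes$ in place of ordinary multiplication, the identity $\Delta(V^T)=V^T\otimes V^T$ reads, for all $a,b\in\Z^{\infty}_{\geq 0}$,
\begin{equation}
\Delta(V_{b,a})=\sum_{c\in\Z^{\infty}_{\geq 0}}V_{c,a}\otimes V_{b,c},
\end{equation}
where the sum is finite by the triangularity recorded just after Definition~\ref{def:variation matrix}. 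This is precisely the assertion that the $V_{v,w}$ are the structure constants of a coalgebra, so it ought to follow from coassociativity alone.

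Second, I would isolate the structural input that makes the variation matrix well defined: in $\Delta(v)$ for a generator $v$, the \emph{left} tensor factor is always a single generator or $1$, never a product. This is visible in~\eqref{eq:GoncharovCoproductFormula}, whose left-hand factor is the single bracket $[y_{i_1\to i_2},\dots\mid t_{j_1},\dots]$, and it is preserved by $\INV$, since $\INV$ rewrites only the inverted symbols occurring in the right-hand factor. Because $\mathbb H^{\symb}$ is free, the generators together with $1$ are linearly independent over $\Q$, so the expansion $\Delta(v)=\sum_w w\otimes V_{v,w}$ of Definition~\ref{def:variation matrix} is unambiguous and each $V_{v,w}\in\mathbb H^{\symb}$ is well defined.

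Third, I would apply coassociativity to a generator $v$ and expand both sides using the definition (with suitably chosen dummy variables). On one side,
\begin{equation}
(\id\otimes\Delta)\Delta(v)=\sum_{a}a\otimes\Delta(V_{v,a}),
\end{equation}
while on the other, expanding $\Delta(v)=\sum_c c\otimes V_{v,c}$ and then $\Delta(c)=\sum_a a\otimes V_{c,a}$ for the left factor,
\begin{equation}
(\Delta\otimes\id)\Delta(v)=\sum_{a,c}a\otimes V_{c,a}\otimes V_{v,c}=\sum_{a}a\otimes\Big(\sum_c V_{c,a}\otimes V_{v,c}\Big).
\end{equation}
Comparing the coefficient of each generator $a$ in the leftmost slot gives $\Delta(V_{v,a})=\sum_c V_{c,a}\otimes V_{v,c}$, which is the desired identity after renaming $v\to b$.

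The main obstacle is justifying this last coefficient comparison, and it is exactly the structural point above: the leftmost tensor factors produced by both iterated coproducts are honest generators (or $1$), and these are linearly independent in the free algebra $\mathbb H^{\symb}$, so reading off the coefficient of a fixed $a$ is legitimate. I would also dispatch the degenerate index ($0\leftrightarrow 1$, giving $V_{0,0}=1$) and confirm finiteness of all sums via the ordering of Definition~\ref{def:ZinfOrder}, so that no convergence or reindexing issues arise.
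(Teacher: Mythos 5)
Your proof is correct and follows the same route as the paper, whose entire argument is the one-line remark that coassociativity yields $\Delta V_{w,v}=\sum_u V_{u,v}\otimes V_{w,u}$. You have simply filled in the details the paper leaves implicit — the entrywise unpacking, and the justification (left tensor factors are single generators, linearly independent in the free algebra) that makes the coefficient comparison legitimate.
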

\begin{proof}
Using coassociativity of the coproduct one easily checks that $\Delta V_{w,v}=\sum_u V_{u,v}\otimes V_{w,u}$.
\end{proof}

\begin{definition}\label{def:FiniteV} Let $\mathbf n=(n_1,\dots,n_d)$, where $n_i\geq 1$. The variation matrix for $\Li_\mathbf n$ is the submatrix $V_{\mathbf n}$ of $V$ parametrized by the tuples $\mathbf v\in\Z^{\infty}_{\geq 0}$ with $\mathbf v\preceq \mathbf n$, $\dim(\mathbf v)=d$, and $v_i\leq n_i$.
\end{definition}

\begin{remark}
We may regard $V_{\mathbf n}$ either as a matrix with entries in $\mathbb H^{\symb}$ or as a matrix of multivalued functions on $S_d(\C)$.
\end{remark}

\begin{example}\label{ex:Variation21}
The variation matrix for $\Li_{2,1}(x_1,x_2)$ is 
\begin{equation}
\begin{bmatrix}
1 & 0 & 0 & 0 & 0 & 0 \\
[x_2]_1 & 1 & 0 & 0 & 0 & 0\\
[x_1x_2]_1 & 0 & 1 & 0 & 0 & 0\\
[x_1,x_2]_{1,1} & [x_1]_1 & -[x_1]_1-[x_1]_0+[x_2]_1 & 1 & 0 & 0\\
[x_1x_2]_{2} & 0 & [x_1x_2]_0 & 0 & 1 & 0\\
[x_1,x_2]_{2,1} & [x_1]_2 & -[x_1]_2-\frac{1}{2}[x_1]_0^2-[x_2]_2 + [x_1x_2]_0[x_2]_1 & [x_1]_0 & [x_2]_1 & 1\\
\end{bmatrix}
\end{equation}
\end{example}

\begin{example}
The variation matrix for $\Li_{n}(x_1)$ is given by
\begin{equation}
\begin{bmatrix}
1\\
[x_1]_1&1\\
[x_1]_2&\phantom{\frac{1}{24}}[x_1]_0&1\\
[x_1]_3&\frac{1}{2}[x_1]_0^2&\phantom{\frac{1}{6}}[x_1]_0&1\\
[x_1]_4&\frac{1}{6}[x_1]_0^3&\frac{1}{2}[x_1]_0^2&\phantom{\frac{1}{2}}[x_1]_0&1\\
[x_1]_5&\frac{1}{24}[x_1]_0^4&\frac{1}{6}[x_1]_0^3&\frac{1}{2}[x_1]_0^2&[x_1]_0&1\\
\vdots&\vdots&\vdots&\vdots&\vdots&\vdots&\ddots
\end{bmatrix}
\end{equation}
\end{example}

\begin{remark}
One can also define variation matrices with entries in $\overline{\mathbb H}^{\symb}$ using the coproduct on $\overline{\mathbb H}^{\symb}$ instead. A superscript $\mathbb H$ or $\overline{\mathbb H}$ may be added for distinction (this is needed in Section~\ref{sec:variationOfHodgeStructure}). Each variant of $V$ (either $V^{\mathbb H}$, $V^{\overline{\mathbb H}}$, $V^{\mathbb H}_{\mathbf n}$, or $V^{\overline{\mathbb H}}_{\mathbf n}$) satisfies $\Delta(V^T)=V^T\otimes V^T$.
\end{remark}

\subsection{The antipode}\label{sec:Antipode}
For a connected Hopf algebra with product $\mu$, coproduct $\Delta$, unit $\eta$, and counit $\varepsilon$, the antipode $S$ is uniquely determined by $\mu(S\otimes\id)\circ\Delta=\eta\circ\varepsilon$.
\begin{proposition}
The antipode satisfies $S(V)=V^{-1}$.
\end{proposition}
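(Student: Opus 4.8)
The plan is to read the defining identity of the antipode, $\mu(S\otimes\id)\circ\Delta=\eta\circ\varepsilon$, entrywise on the matrix $V^T$ and interpret it as a matrix identity, using the theorem $\Delta(V^T)=V^T\otimes V^T$. Because the coproduct of a matrix is taken entrywise while the tensor product of matrices is contracted by ordinary matrix multiplication, the relation $\Delta(V^T)=V^T\otimes V^T$ says precisely that $V^T$ is ``group-like'' in the matrix sense, and the antipode of such an object should be its matrix inverse. Since $S$ is applied entrywise we have $S(V^T)=S(V)^T$, so it suffices to prove $S(V^T)=(V^T)^{-1}$.

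First I would record that $\varepsilon(V)=I$, the identity matrix. Applying the counit axiom $(\id\otimes\varepsilon)\circ\Delta=\id$ to a generator $v$ gives $\sum_w w\,\varepsilon(V_{v,w})=v$; since the left tensor factors $w$ range over the linearly independent generators indexed by $\Z^{\infty}_{\geq 0}$, this forces $\varepsilon(V_{v,w})=\delta_{v,w}$. Hence $\eta\circ\varepsilon$ sends $V^T$ to the identity matrix with entries $\eta(\delta_{v,w})$.

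The core step is then to apply $\mu(S\otimes\id)$ to both sides of $\Delta(V^T)=V^T\otimes V^T$. Applying $S$ to the left tensor factor yields $S(V^T)\otimes V^T$, and contracting with $\mu$ (matrix multiplication) produces the matrix $S(V^T)\,V^T$, whose $(v,w)$ entry is $\sum_u S((V^T)_{v,u})(V^T)_{u,w}$. By the antipode identity this entry equals $\eta(\varepsilon((V^T)_{v,w}))=\delta_{v,w}$, so $S(V^T)\,V^T=I$. Combined with invertibility of $V^T$ this gives $S(V^T)=(V^T)^{-1}$ (the parallel axiom $\mu(\id\otimes S)\circ\Delta=\eta\circ\varepsilon$ likewise yields $V^T\,S(V^T)=I$, consistent with this). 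Transposing and using $S(V^T)=S(V)^T$ gives $S(V)=V^{-1}$.

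The one point needing care — and the main obstacle — is that $V$ is an infinite matrix, so both ``$V^{-1}$'' and the product $S(V^T)\,V^T$ must be shown to be well defined. This is handled by triangularity: by Definition~\ref{def:variation matrix} the entry $V_{v,w}$ vanishes unless $\dim(v)=\dim(w)$ and $w\preceq v$, while $V_{v,v}=1$, so with respect to the well-ordering $\prec$ the matrix $V$ is lower triangular with unit diagonal and has only finitely many nonzero entries in each row and column. Consequently every matrix product above is a finite sum in each entry, the inverse $V^{-1}$ exists and is again triangular with unit diagonal, and all manipulations are literal identities in $\mathbb{H}^{\symb}$. The same argument applies verbatim to $V^{\overline{\mathbb H}}$, $V_{\mathbf n}$, and $V_{\mathbf n}^{\overline{\mathbb H}}$.
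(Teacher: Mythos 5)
Your proof is correct and follows essentially the same route as the paper: apply the antipode axiom $\mu(S\otimes\id)\circ\Delta=\eta\circ\varepsilon$ entrywise to $V^T$, use $\Delta(V^T)=V^T\otimes V^T$ to get $S(V^T)V^T=I$, and conclude. The paper's proof is just a terser version of this; your additional verifications (that $\varepsilon(V)=I$ via the counit axiom, and that triangularity makes the infinite matrix products and the inverse well defined) are details the paper leaves implicit.
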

\begin{proof}
Let $I$ denote the identity matrix. Since $\Delta(V^T)=V^T\otimes V^T$ we have
\begin{equation}
    I=\eta\varepsilon(V^T)=\mu(S\otimes\id)\Delta(V^T)=S(V^T)V^T.
\end{equation}
The result follows.
\end{proof}
% \begin{remark}Similarly $S(V_{\mathbf n})=V_{\mathbf n}^{-1}$.
% \end{remark}

\subsection{Derivatives}

 Using the generating series~\eqref{eq:PowerSeriesLi} for $\Li_{\mathbf n}(\mathbf x)$
 one easily computes the partial derivatives $\partial_k=\frac{\partial}{\partial x_k}$ and thus its total derivative $\dX \Li_{\mathbf n}(\mathbf x)$. The result can be conveniently expressed using the generating series
\begin{equation}
    \Li(x_1,\dots,x_d| t_1,\dots,t_d)=\sum_{n_i\geq 1}\Li_{n_1,\dots,n_d}(x_1,\dots,x_d)t_1^{n_1-1}\cdots t_d^{n_d-1}.
\end{equation}
Namely, we have
\begin{equation}
    \begin{aligned}
\dX &\Li(x_1,\dots,x_d|t_1,\dots,t_d)=\Li(x_1,\dots,x_d|t_1,\dots,t_d)\left(\sum_{k=1}^d\dX\log(x_k)t_k\right)\\
&-\Li(x_2,\dots,x_d|t_2,\dots,t_d)~\dX\log(1-x_1)\\
&-\sum_{k=2}^d\Li(x_1,\dots,x_{k-1}x_k,\dots,x_d|t_1,\dots,t_{k-1},t_{k+1},\dots,t_d)~\dX\log(1-x_k)\\
&-\sum_{k=1}^{d-1}\Li(x_1,\dots,x_kx_{k+1},\dots,x_d|t_1,\dots,t_{k-1},t_{k+1},\dots,t_d)\big(-\dX\log(1-x_k)+\dX\log(x_k)\big).
    \end{aligned}
\end{equation}
Motivated by this we now define a derivation on $\mathbb H^{\symb}$.
Let $\dX\mathbb H^{\symb}$ be the free $\mathbb H^{\symb}$ module generated by symbols $\dX [\prod_{r=i}^jx_r]_1$ and $\dX[x_i]_0$ and define a derivation $\dX\colon\mathbb H^{\symb}\to \dX\mathbb H^{\symb}$ by
\begin{equation}\label{eq:dMultLog}
  \begin{aligned}
\dX[y_1,\dots,y_d|&t_1,\dots,t_d]=[y_1,\dots,y_d|t_1,\dots,t_d]\left(\sum_{k=1}^k\dX [y_k]_0t_k\right)\\
&+[y_2,\dots,y_d|t_2,\dots,t_d]\dX[x_1]_1\\&+\sum_{k=2}^d[y_1,\dots,y_{k-1}y_k,\dots,y_d|t_1,\dots,t_{k-1},t_{k+1},\dots,t_d)\dX[y_k]_1\\
&-\sum_{k=1}^{d-1}[]y_1,\dots,y_ky_{k+1},\dots,y_d|t_1,\dots,t_{k-1},t_{k+1},\dots,t_d]\big(\dX[y_k]_1+\dX[y_k]_0\big)
    \end{aligned} 
\end{equation}
and imposing the Leibniz rule on products.

Let $\mathbb H^{\symb}_d$ denote the weight $d$ subspace and let $\Delta_{k,l}$ denote the composition of $\Delta$ with projection onto $\mathbb H^{\symb}_k\otimes\mathbb H^{\symb}_l$.
\begin{lemma}\label{lemma:dAndDeltaPolylog} The restriction of $\dX$ to $\mathbb H^{\symb}_d$ equals $\phi\circ\Delta_{d-1,1}$, where $\phi$ takes $x\otimes y$ to $x~\dX y$.
\end{lemma}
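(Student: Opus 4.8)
I would exploit that both sides are derivations and then check the identity on algebra generators using Goncharov's generating series.

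First I would package the maps $\phi\circ\Delta_{d-1,1}$ over all weights into a single operator $D\colon\mathbb{H}^{\symb}\to\dX\mathbb{H}^{\symb}$, namely $D$ is $\phi$ applied to the part $\Delta_{*,1}$ of $\Delta=\Delta_{\mathbb H}$ that keeps only the weight-$1$ component of the \emph{right} tensor slot. The point is that $D$ is a derivation. Indeed, since $\mathbb{H}^{\symb}$ is connected graded, the weight-$0$-right component of $\Delta(a)$ is $a\otimes 1$, so grading the product $\Delta(ab)=\Delta(a)\Delta(b)$ by right-weight gives $\Delta_{*,1}(ab)=\Delta_{*,1}(a)(b\otimes1)+(a\otimes1)\Delta_{*,1}(b)$. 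Applying $\phi$ and using $\phi\big((x\otimes z)(b\otimes1)\big)=b\,\phi(x\otimes z)$ and $\phi\big((a\otimes1)(x\otimes z)\big)=a\,\phi(x\otimes z)$ turns this into the Leibniz rule $D(ab)=a\,D(b)+b\,D(a)$. As $\dX$ is a derivation by construction and $\mathbb{H}^{\symb}$ is free as an algebra, it suffices to prove $\dX=D$ on the generators $[x_i]_0$ and $[\mathbf y]_{\mathbf n}$.

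On $[x_i]_0$ this is immediate: $\Delta[x_i]_0=[x_i]_0\otimes1+1\otimes[x_i]_0$, so $\Delta_{*,1}[x_i]_0=1\otimes[x_i]_0$ and $\phi(1\otimes[x_i]_0)=\dX[x_i]_0$. For the polylog generators I would work with the generating series and prove the single identity $\dX[\mathbf y|\mathbf t]=\phi\big(\Delta_{*,1}[\mathbf y|\mathbf t]\big)$; extracting the coefficient of $t_1^{n_1-1}\cdots t_d^{n_d-1}$ then yields the statement for $[\mathbf y]_{\mathbf n}$. The core computation is to read off the weight-$1$-right part of~\eqref{eq:GoncharovCoproductFormula} after applying $\INV$. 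Because weight is additive across the product defining the right factor, a nonzero contribution needs exactly one weight-$1$ factor and all others of weight $0$; the weight-$0$ pieces are the signs, the constant terms of the exponentials, and the empty brackets $[\emptyset|\emptyset]=1$. This pins the index data $0=i_0\le j_0<\cdots<j_k<i_{k+1}=d+1$ to a one-step perturbation of the diagonal configuration $i_\alpha=j_\alpha=\alpha,\ k=d$, giving three families. (A) The diagonal configuration with the linear term of $\exp\big(\sum_k[y_k]_0t_k\big)$ produces $[\mathbf y|\mathbf t]\otimes\sum_k[y_k]_0t_k$, matching the first line of~\eqref{eq:dMultLog}. (B) A single nonempty regular bracket of weight $1$ forces $i_{\beta+1}=j_\beta+2$, $k=d-1$, and contributes $[y_{\beta+1}]_1$; the case $\beta=0$ gives the second line and $1\le\beta\le d-1$ (merging $y_\beta y_{\beta+1}$) gives the third. (C) A single nonempty inverted bracket of weight $1$ forces $j_\beta=i_\beta+1$, $k=d-1$, merges $y_\beta y_{\beta+1}$, carries the sign $(-1)^{j_\beta-i_\beta}=-1$, and — via $\INV[y^{-1}]_1=[y]_1+[y]_0$ from Example~\ref{ex:INVDepth1} — contributes $-([y_\beta]_1+[y_\beta]_0)$, matching the fourth line. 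Applying $\phi$ replaces each weight-$1$ right factor $z$ by $\dX z$, reproducing~\eqref{eq:dMultLog} term by term.

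The main obstacle is the bookkeeping in families (B) and (C): one must verify that each admissible one-step perturbation indeed has all remaining brackets empty (which fixes the full sequences $i_\bullet,j_\bullet$), track the contraction arguments $y_{i\to j}$ and the $t$-shifts so that the merged variable lands in the correct slot and the omitted $t$-index matches (namely $t_{\beta+1}$ for (B) and $t_\beta$ for (C)), and confirm the total sign $\prod_\alpha(-1)^{j_\alpha-i_\alpha}$. One also checks that $\INV$ acts cleanly here, since in every weight-$1$ term the right factor is a single regular or inverted generator times factors equal to $1$ or regular; in particular the would-be $\beta=0$ term of family (C) is degenerate because $y_0=1$, which is exactly why the fourth line of~\eqref{eq:dMultLog} begins at $k=1$.
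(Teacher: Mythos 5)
Your proposal is correct and takes essentially the same approach as the paper: the paper also verifies the identity on the generating series $[\mathbf y|\mathbf t]$ (it dismisses this as ``easy to show''; your families (A)--(C) simply carry out that extraction of the right-weight-$1$ part of the coproduct, including the $\INV$ of the single inverted bracket), and then handles products exactly as you do, by checking that $\phi\circ\Delta_{*,1}$ obeys the Leibniz rule $\phi\circ\Delta_{n-1,1}(P_1P_2)=P_2\,\dX P_1+P_1\,\dX P_2=\dX(P_1P_2)$.
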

\begin{proof} It is easy to show that $\phi\circ\Delta_{d-1,1}[y_1,\dots,y_d|t_1,\dots,t_d]$ equals the righthand side of~\eqref{eq:dMultLog}. To see that it still holds for products, let $P_1$ and $P_2$ be symbols in weight  $k$ and $l$, respectively. We then have (for $n=k+l$)
\begin{equation}
    \phi\circ\Delta_{n-1,1}(P_1 P_2)=\phi\big(\Delta_{k-1,1}(P_1)(P_2\otimes 1)+(P_1\otimes 1)\Delta_{l-1,1}(P_2)\big)=P_2~\dX P_1+P_1~\dX P_2=\dX (P_1P_2).
\end{equation}
The result follows. 
\end{proof}
\begin{remark} One can similarly define $\dX \overline{\mathbb H}^{\symb}$ and a derivation $\dX\colon\overline{\mathbb H}^{\symb}\to \dX\overline{\mathbb H}^{\symb}$.
The formula is the same except that $\dX[y_k]_1+\dX[y_k]_0$ is replaced by $\dX[y_k^{-1}]_1$.
\end{remark}

\begin{corollary}Let $\omega=\dX V_1$, where $V_1$ is the weight 1 part of $V$.
We have $\dX V=\omega V$. The same holds for $V^{\overline{\mathbb H}}$ and for the submatrices associated to $\mathbf n$.
\end{corollary}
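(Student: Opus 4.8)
The plan is to prove the matrix identity one entry at a time, using Lemma~\ref{lemma:dAndDeltaPolylog} to turn $\dX$ into the weight-graded piece $\Delta_{N-1,1}$ of the coproduct, and then reading that piece off from $\Delta(V^T)=V^T\otimes V^T$. Recall that the latter identity unpacks (exactly as in its proof) to $\Delta(V_{v,w})=\sum_u V_{u,w}\otimes V_{v,u}$, and that $V_{v,w}$ is homogeneous of weight $N:=\|v\|-\|w\|$ because $\Delta$ is graded. First I apply Lemma~\ref{lemma:dAndDeltaPolylog} to the weight-$N$ element $V_{v,w}$, obtaining $\dX V_{v,w}=\phi\circ\Delta_{N-1,1}(V_{v,w})$, so that only the part of $\Delta(V_{v,w})$ whose right tensor factor has weight $1$ will contribute.

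In the sum $\Delta(V_{v,w})=\sum_u V_{u,w}\otimes V_{v,u}$ the right factor $V_{v,u}$ has weight $\|v\|-\|u\|$, which equals $1$ precisely when $\|u\|=\|v\|-1$; for such $u$ the left factor $V_{u,w}$ then has the complementary weight $N-1$ automatically. Hence $\Delta_{N-1,1}(V_{v,w})=\sum_{\|u\|=\|v\|-1}V_{u,w}\otimes V_{v,u}$, and applying $\phi\colon x\otimes y\mapsto x\,\dX y$ gives
\begin{equation*}
\dX V_{v,w}=\sum_{\|u\|=\|v\|-1}V_{u,w}\,\dX V_{v,u}.
\end{equation*}
Each $V_{v,u}$ occurring here is a weight-$1$ entry, i.e.\ $V_{v,u}=(V_1)_{v,u}$, so $\dX V_{v,u}=\omega_{v,u}$; since the coefficient algebra is commutative, $V_{u,w}\,\omega_{v,u}=\omega_{v,u}V_{u,w}$ and the right-hand side is exactly $(\omega V)_{v,w}$. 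This establishes $\dX V=\omega V$, the main effort being to keep the weight bookkeeping and the transpose conventions straight.

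The two remaining assertions cost little. For $V^{\overline{\mathbb H}}$ the same computation applies verbatim once we record that $\Delta\big((V^{\overline{\mathbb H}})^T\big)=(V^{\overline{\mathbb H}})^T\otimes (V^{\overline{\mathbb H}})^T$ (from the remark on variation matrices over $\overline{\mathbb H}^{\symb}$) and that the derivation of the remark following Lemma~\ref{lemma:dAndDeltaPolylog} again satisfies $\dX=\phi\circ\Delta_{N-1,1}$ on weight-$N$ elements, by the identical argument. For the finite submatrices $V_{\mathbf n}$ I only need that the truncated product $\omega_{\mathbf n}V_{\mathbf n}$ coincides with the restriction of $\omega V$, i.e.\ that no index outside the parametrizing set contributes. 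This is forced by the triangular shape of the matrices: a nonzero $\omega_{v,u}$ requires $V_{v,u}\neq 0$ of weight $1$, and such an entry has $u$ obtained from $v$ by lowering a single component by one (the weight-$1$ entries are exactly the $\dX\log$ terms, as the derivative formula~\eqref{eq:dMultLog} and Example~\ref{ex:Variation21} show); in particular $u_i\le v_i\le n_i$ for all $i$ and $u\preceq v\preceq\mathbf n$, so $u$ again indexes $V_{\mathbf n}$ whenever $v$ does, and the equation restricts. The only genuinely delicate point in the whole argument is this componentwise domination of the weight-$1$ entries; everything else is formal once the two cited identities are in hand.
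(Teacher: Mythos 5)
Your proofs of the first two assertions (for $V$ and for $V^{\overline{\mathbb H}}$) are correct and are exactly the paper's argument written out entry by entry: Lemma~\ref{lemma:dAndDeltaPolylog} converts $\dX$ into $\phi\circ\Delta_{N-1,1}$, and $\Delta(V_{v,w})=\sum_u V_{u,w}\otimes V_{v,u}$ isolates the weight-$1$ right factors. The gap is in the submatrix assertion. Your key claim there --- that a nonzero weight-$1$ entry $V_{v,u}$ forces $u$ to be obtained from $v$ by lowering a single component by one, hence $u_i\le v_i$ for all $i$ --- is false. The columns carrying weight-$1$ entries in row $v$ are read off from~\eqref{eq:dMultLog} via Lemma~\ref{lemma:dAndDeltaPolylog}, and the fourth sum there (the terms merging $y_k$ and $y_{k+1}$ when $n_k=1$) moves weight to the \emph{left}. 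Concretely, for $v=(1,2)$, i.e.\ $[x_1,x_2]_{1,2}$, extracting the coefficient of $t_1^0t_2^1$ in~\eqref{eq:dMultLog} gives
\begin{equation*}
\dX[x_1,x_2]_{1,2}=[x_1,x_2]_{1,1}\,\dX[x_2]_0+[x_2]_2\,\dX[x_1]_1-[x_1x_2]_2\bigl(\dX[x_1]_1+\dX[x_1]_0\bigr),
\end{equation*}
so $V_{(1,2),(2,0)}=-[x_1]_1-[x_1]_0\neq 0$. The column index $u=(2,0)$ (the generator $[x_1x_2]_2$) is not obtained from $(1,2)$ by lowering one component, and it violates componentwise domination: $u_1=2>1=v_1$.

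This is not a cosmetic slip, because the conclusion you are after breaks at this very example. For $\mathbf n=(1,2)$, Definition~\ref{def:FiniteV} requires $v_1\le1$, so the index set of $V_{\mathbf n}$ excludes $(2,0)$; the displayed formula then shows that the $\bigl((1,2),0\bigr)$ entry of $\dX V_{\mathbf n}-\omega_{\mathbf n}V_{\mathbf n}$ equals $-[x_1x_2]_2\bigl(\dX[x_1]_1+\dX[x_1]_0\bigr)\neq0$, i.e.\ the identity $\dX V_{\mathbf n}=\omega_{\mathbf n}V_{\mathbf n}$ fails for that index set. What is true, and what your argument does establish once the list of weight-$1$ columns is corrected to include the merge terms, is the submatrix statement whenever the index set is closed under passing from a row to the columns of its weight-$1$ entries. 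This holds, for instance, when $\mathbf n$ is weakly decreasing ($n_1\ge n_2\ge\dots\ge n_d$, as in Example~\ref{ex:Variation21}), since then the merged weight $n_{k+1}$ lands in an earlier slot whose bound is at least as large; alternatively one can index $V_{\mathbf n}$ by the smallest set of tuples containing $\mathbf n$ that is closed in this sense, for which the restriction of $\dX V=\omega V$ is automatic and no domination argument is needed at all. As written, however, your proof of the third assertion rests on a false structural claim about the weight-$1$ entries, and that assertion, read against the paper's literal definition of $V_{\mathbf n}$, is precisely the part of the corollary that does not go through.
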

\begin{proof}
    This follows from Lemma~\ref{lemma:dAndDeltaPolylog} and the fact that $\Delta(V^T)=V^T\otimes V^T$.
\end{proof}

\section{The Hopf algebra of tensors and the symbol map}\label{sec:Tensors}
Our main reference for this section is~\cite{DuhrDulat_PolyLogTools}.
\subsection{The tensor algebra}
For an abelian group $A$ let $T^*A$ denote the tensor algebra of $A$. When convenient, we regard elements as words consisting of letters in $A$. It is well known %(see e.g.~\cite[Prop 1.6.7]{Grinberg-HopfAlgebra})
that $T^*A$ is a graded Hopf algebra (graded by word length $|\,|$) with coproduct $\Delta$ given by \emph{deconcatenation}
\begin{equation}
\Delta(w)=\sum_{w=w_1w_2}w_1\otimes w_2
\end{equation}
and product given by the \emph{shuffle product} $\shuffle$ defined recursively by
\begin{equation}
(aw_1)\shuffle (bw_2)=a(w_1\shuffle(bw_2))+b((aw_1)\shuffle w_2),\qquad |a|=|b|=1,
\end{equation}
where the recursion starts with $w\shuffle 1=1\shuffle w=w$.

\subsection{The symbol map} For a graded Hopf algebra $H$, let $\Delta_{1,\dots,1}\colon H\to T^* H_1$ denote the maximal iteration of the coproduct. More precisely, %if $\Delta_{k,l}$ denotes the projection of the coproduct to $H_k\otimes H_l$,
the restriction of $\Delta_{1,\dots,1}$ to $H_n$ is defined inductively by
\begin{equation}
    \Delta_{1,\dots,1}=(\id\otimes\Delta_{1,\dots,1})\circ\Delta_{1,n-1}=
    (\Delta_{1,\dots,1}\otimes \id)\circ\Delta_{n-1,1}
\end{equation}
where the second equality follows from coassociativity of $\Delta$.

\begin{definition}
We call $\Delta_{1,\dots,1}\colon\mathbb H^{\symb}\to T^*\mathbb H^{\symb}_1$ the \emph{symbol map}, and call the image of $x\in\mathbb H^{\symb}$ the \emph{symbol of $x$}. 
\end{definition}

\begin{example}\label{ex:Symbols}
The symbols of $[x_1]_n$, $[x_1,x_2]_{1,1}$, and $[x_1,x_2]_{2,1}$ are given by $(n\geq 2)$
\begin{equation}
    \begin{gathered}
        -[x_1]_0^{\otimes(n-1)}\otimes[x_1]_1,\qquad ([x_1]_0-[x_1]_1+[x_2]_1)\otimes [x_1x_2]_1+[x_1]_1\otimes[x_2]_1,
\\
[x_1]_0\otimes ([x_1]_0-[x_1]_1+[x_2]_1)\otimes [x_1x_2]_1+[x_2]_1\otimes [x_1x_2]_0\otimes[x_1x_2]_1+[x_1]_0\otimes [x_1]_1\otimes[x_2]_1
    \end{gathered}
\end{equation}
\end{example}

The following result is probably well known, but we are not aware of a reference with a proof.
\begin{proposition}\label{prop:HopfAlgMorphism} $\Delta_{1,\dots,1}$ is a morphism of graded Hopf algebras.
\end{proposition}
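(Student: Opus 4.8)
The claim has three parts: $\Delta_{1,\dots,1}$ preserves the grading, is a morphism of coalgebras, and is a morphism of algebras. Grading is immediate, since $\Delta$ is graded and hence $\Delta_{1,\dots,1}$ sends $\mathbb H^{\symb}_n$ into $(\mathbb H^{\symb}_1)^{\otimes n}$, i.e.\ words of length $n$. Compatibility with units and counits is equally routine: $\Delta_{1,\dots,1}(1)$ is the empty word, and the counit on $T^*\mathbb H^{\symb}_1$ is projection onto length $0$. So the content lies in the algebra and coalgebra morphism properties, which I would treat separately.

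The coalgebra property, namely that deconcatenation of $\Delta_{1,\dots,1}(x)$ agrees with $(\Delta_{1,\dots,1}\otimes\Delta_{1,\dots,1})\circ\Delta(x)$, follows directly from coassociativity. For $x\in\mathbb H^{\symb}_n$ I would project both sides onto (words of length $k$) $\otimes$ (words of length $n-k$). Deconcatenation cuts a length-$n$ word after its $k$-th letter, so by coassociativity the left side equals $(\Delta_{1,\dots,1}\otimes\Delta_{1,\dots,1})\circ\Delta_{k,n-k}$; the right side, projected onto the same bidegree, is visibly the same expression. Summing over $k$ yields the coalgebra morphism identity.

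The algebra property $\Delta_{1,\dots,1}(xy)=\Delta_{1,\dots,1}(x)\shuffle\Delta_{1,\dots,1}(y)$ is the main point, and I would prove it by induction on $n=|x|+|y|$ using the recursion $\Delta_{1,\dots,1}=(\id\otimes\Delta_{1,\dots,1})\circ\Delta_{1,n-1}$ together with the recursive definition of $\shuffle$. The base case, with $x$ or $y$ of weight $0$ and hence scalar, is trivial. For the inductive step the key input is connectedness of $\mathbb H^{\symb}$: writing $\Delta(x)=1\otimes x+\sum_{|x'|\ge1}x'\otimes x''$ and similarly for $y$, and using $\Delta(xy)=\Delta(x)\Delta(y)$, the part with weight $1$ in the first slot splits cleanly as $\Delta_{1,n-1}(xy)=\sum_{|x'|=1}x'\otimes x''y+\sum_{|y'|=1}y'\otimes xy''$, the two groups according to whether the first letter comes from $x$ (forcing $y$ to contribute the unit) or from $y$. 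Applying $\id\otimes\Delta_{1,\dots,1}$ and the inductive hypothesis to $x''y$ and $xy''$ turns these into $\sum_{|x'|=1}x'\otimes(\Delta_{1,\dots,1}(x'')\shuffle\Delta_{1,\dots,1}(y))$ and $\sum_{|y'|=1}y'\otimes(\Delta_{1,\dots,1}(x)\shuffle\Delta_{1,\dots,1}(y''))$. Recognizing $\sum_{|x'|=1}x'\otimes\Delta_{1,\dots,1}(x'')=\Delta_{1,\dots,1}(x)$ (and likewise for $y$), these are exactly the two terms in the recursive definition of $\Delta_{1,\dots,1}(x)\shuffle\Delta_{1,\dots,1}(y)$, closing the induction.

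The main obstacle is the bookkeeping in this inductive step: one must check that the first-slot-weight-$1$ part of $\Delta(x)\Delta(y)$ has no mixed contributions, i.e.\ no term in which both $x$ and $y$ deposit positive weight in the first slot. This is precisely where connectedness ($\mathbb H^{\symb}_0=\Q$, so the only weight-$0$ elements are scalar multiples of $1$) is used; once the separation is in hand, the match with the recursive shuffle is mechanical. I would alternatively present the non-inductive version: $\Delta^{(n-1)}(xy)=\Delta^{(n-1)}(x)\,\Delta^{(n-1)}(y)$ in $(\mathbb H^{\symb})^{\otimes n}$ with slotwise product, and projecting onto $(\mathbb H^{\symb}_1)^{\otimes n}$ forces each slot to receive weight $1$ from exactly one of $x,y$; the choice of which $k$ slots come from $x$ ranges over all $(k,n-k)$-shuffles, and collapsing the complementary weight-$0$ slots via coassociativity recovers the symbols of $x$ and $y$ in order. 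This makes the emergence of the shuffle product transparent, at the cost of slightly heavier indexing.
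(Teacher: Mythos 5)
Your proposal is correct and follows essentially the same route as the paper: induction on weight via the recursion $\Delta_{1,\dots,1}=(\id\otimes\Delta_{1,\dots,1})\circ\Delta_{1,n-1}$, the connectedness-based splitting of $\Delta_{1,n-1}(xy)$ into terms whose first letter comes from $x$ or from $y$, and matching the result against the recursive definition of the shuffle product. The coalgebra half you spell out is the part the paper explicitly declares simpler and leaves to the reader, and your argument for it is the intended one.
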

\begin{proof}
We only prove that $\Delta_{1,\dots,1}$ preserves the product, since this is all we need. The proof that $\Delta_{1,\dots,1}$ preserves the coproduct is simpler and left to the reader. We must show that
\begin{equation}\label{eq:symbol of product is the shuffle product of symbols}
\Delta_{1,\dots,1}(ab)=\Delta_{1,\dots,1}(a)\shuffle\Delta_{1,\dots,1}(b).
\end{equation}
Suppose $|a|=k$, and $|b|=l$. Letting
\begin{equation}
\Delta_{1,k-1}(a)=\sum_i\alpha_i\otimes a_i,\qquad \Delta_{1,l-1}(b)=\sum_j\beta_j\otimes b_j,
\end{equation}
we have
\begin{equation}
\Delta_{1,k+l-1}(ab)=(\sum_i\alpha_i\otimes a_i)(1\otimes b)+(1\otimes a)(\sum_j\beta_j\otimes b_j)=\sum_i\alpha_i\otimes(a_ib)+\sum_j\beta_j\otimes(ab_j).
\end{equation}
By the inductive definition of $\Delta_{1,\dots,1}$ we have
\begin{equation}
\Delta_{1,\dots,1}(a)=\sum_i\alpha_i\otimes\Delta_{1,\dots,1}(a_i),\qquad \Delta_{1,\dots,1}(b)=\sum_j\beta_j\otimes\Delta_{1,\dots,1}(b_j).
\end{equation}
By induction on weight we have
\begin{equation}
\begin{aligned}
\Delta_{1,\dots,1}(ab)&=(\id\otimes\Delta_{1,\dots,1})\circ\Delta_{1,k+l-1}(ab)\\
&=\sum_i\alpha_i\otimes\Delta_{1,\dots,1}(a_ib)+\sum_j\beta_j\otimes\Delta_{1,\dots,1}(ab_j)\\
&=\sum_i\alpha_i\otimes(\Delta_{1,\dots,1}(a_i)\shuffle\Delta_{1,\dots,1}(b))+\sum_j\beta_j\otimes(\Delta_{1,\dots,1}(a)\shuffle\Delta_{1,\dots,1}(b_j)).
\end{aligned}
\end{equation}
On the other hand, we have
\begin{equation}
\begin{aligned}
&\Delta_{1,\dots,1}(a)\shuffle\Delta_{1,\dots,1}(b)\\
&=\left(\sum_i\alpha_i\otimes\Delta_{1,\dots,1}(a_i)\right)\shuffle\left(\sum_j\beta_j\otimes\Delta_{1,\dots,1}(b_j)\right)\\
&=\sum_{i,j}(\alpha_i\otimes\Delta_{1,\dots,1}(a_i))\shuffle(\beta_j\otimes\Delta_{1,\dots,1}(b_j))\\
&=\sum_{i,j}\big(\alpha_i\otimes(\Delta_{1,\dots,1}(a_i)\shuffle(\beta_j\otimes\Delta_{1,\dots,1}(b_j))\big)+\beta_j\otimes((\alpha_i\otimes\Delta_{1,\dots,1}(a_i))\shuffle\Delta_{1,\dots,1}(b_j))\\
&=\sum_{i}\alpha_i\otimes(\Delta_{1,\dots,1}(a_i)\shuffle(\sum_j\beta_j\otimes\Delta_{1,\dots,1}(b_j)))+\sum_j\beta_j\otimes((\sum_i\alpha_i\otimes\Delta_{1,\dots,1}(a_i))\shuffle\Delta_{1,\dots,1}(b_j))\\
&=\sum_{i}\alpha_i\otimes(\Delta_{1,\dots,1}(a_i)\shuffle\Delta_{1,\dots,1}(b))+\sum_j\beta_j\otimes(\Delta_{1,\dots,1}(a)\shuffle\Delta_{1,\dots,1}(b_j)).
\end{aligned}
\end{equation}
This shows that $\Delta_{1,\dots,1}(ab)=\Delta_{1,\dots,1}(a)\shuffle\Delta_{1,\dots,1}(b)$ as desired.
%To show $\Delta_{1,\dots,1}$ preserves coproduct(to reduce confusion, we use $\bigotimes$ in $T^*A\bigotimes T^*A$) we need
%\begin{equation}
%(\Delta_{1,\dots,1}\bigotimes\Delta_{1,\dots,1})\circ\Delta(a)=\Delta\circ\Delta_{1,\dots,1}(a)
%\end{equation}
%For any $|a|=n$. First we denote for $0\leq k\leq n$
%\[
%\Delta_{k,n-k}(a)=\sum_{i=1}^{n_k}\alpha^{(k)}_i\otimes\beta^{(n-k)}_i
%\]
%For some $n_k\in\mathbb N$, $|\alpha^{(k)}_i|=|\beta^{(k)}_i|=k$. Notice that we would then have
%\[
%\sum_{j=1}^ma^{(1)}_j\otimes\cdots\otimes a_j^{(n)}=\Delta_{1,\dots,1}(a)=\sum_{i}\Delta_{1,\dots,1}(\alpha^{(k)}_i)\otimes\Delta_{1,\dots,1}(\beta^{(n-k)}_i)
%\]
%For $|a^{(k)}_j|=1$. So we have
%\begin{align*}
%(\Delta_{1,\dots,1}\bigotimes\Delta_{1,\dots,1})\circ\Delta(a)&=\sum_{k=0}^n\sum_{i=1}^{n_k}\Delta_{1,\dots,1}(\alpha^{(k)}_i)\bigotimes\Delta_{1,\dots,1}(\beta^{(n-k)}_i)\\
%&=\sum_{k=0}^n\sum_{j=1}^m(a^{(1)}_j\otimes\cdots\otimes a_j^{(k)})\bigotimes(a^{(k+1)}_j\otimes\cdots\otimes a_j^{(n)})\\
%&=\sum_{j=1}^m\sum_{k=0}^n(a^{(1)}_j\otimes\cdots\otimes a_j^{(k)})\bigotimes(a^{(k+1)}_j\otimes\cdots\otimes a_j^{(n)})\\
%&=\sum_{j=1}^m\Delta(a^{(1)}_j\otimes\cdots\otimes a_j^{(n)})=\Delta\left(\sum_{j=1}^ma^{(1)}_j\otimes\cdots\otimes a_j^{(n)}\right)\\
%&=\Delta\circ\Delta_{1,\dots,1}(a)
%\end{align*}
\end{proof}

\subsection{Killing products}
For any graded, connected, commutative Hopf algebra $H$ with product $\mu$, coproduct $\Delta$, and antipode $S$, there is a natural projection map $\Pi\colon H\to H$ whose kernel is $H_{>0}H_{>0}$. Namely, we have (see~\cite[Sec.~2.2]{CharltonDuhrGangl_SingleValuedPolylogs})
\begin{equation}
    \Pi=\id+Y^{-1}\mu(S\otimes Y)\Delta',
\end{equation}
where $\Delta'=\Delta-\id\otimes 1-1\otimes \id$ is the restricted coproduct, and $Y$ multiplies a homogeneous element by its weight. In the case of $T^*A$, we have
\begin{equation}
    \Pi(a_1\otimes\dots\otimes a_n)=a_1\otimes\dots\otimes a_n+\frac{1}{n}\sum_{i=1}^{n-1}(-1)^i(n-i)a_i\otimes\dots\otimes a_1\shuffle a_{i+1}\otimes\dots\otimes a_n.
\end{equation}
It is not difficult to see that this formula is equivalent to the recursive formula
\begin{equation}\label{eq:PiRecursion}
\Pi(a_1\otimes\dots\otimes a_n)=\frac{n-1}{n}\big(\Pi(a_1\otimes\dots\otimes a_{n-1})\otimes a_n-\Pi(a_2\otimes\dots\otimes a_n)\otimes a_1\big).
\end{equation}
The recursion starts with $\Pi(a)=a$.

\section{Forms}\label{sec:Forms}
Recall that $\mathbb H^{\symb}_1$ is generated by $[x_i]_0$ and $[\prod_{r=i}^jx_r]_1$. It will be convenient to change notation and denote $[x_i]_0$ by $u_i$ and $[\prod_{r=i}^jx_r]_1$ by $-v_{i,j}$. We sometimes write $v_i$ instead of $v_{i,i}$. 

Let $\Omega^*$ denote the algebraic de Rham complex for the polynomial ring over $\Q$ generated by the $u_i$ and $v_{i,j}$. Consider the map
\begin{equation}\label{eq:SymbToForm}
w\colon T^*\mathbb H^{\symb}_1\to\Omega^1,\qquad f_1\otimes\cdots\otimes f_n\mapsto\frac{(-1)^{n+1}}{n!}\sum_{1\leq i\leq n}(-1)^{i-1}\binom{n-1}{i-1}f_1\cdots \dX f_i\cdots f_n.
\end{equation}
\begin{example} We have
\begin{equation}
w(f_1\otimes f_2)=\frac{1}{2}(f_1\dX f_2-f_2\dX f_1),\qquad w(f_1\otimes f_2\otimes f_3)=\frac{1}{6}(f_2f_3\dX f_1-2f_1f_3\dX f_2+f_1f_2\dX f_3)
\end{equation}
\end{example}
\begin{example} By Example~\ref{ex:Symbols}, we have
\begin{equation}
    \begin{gathered}
        w(\Delta_{1,\dots,1}[x_1]_{n})=\frac{(-1)^n}{n!}u_1^{n-2}(u_1\dX v_1-v_1\dX u_1),\quad n\geq 2\\
w(\Delta_{1,\dots,1}[x_1,x_2]_{1,1})=\frac{1}{2}(-u_1 \dX v_{1,2}+ v_{1,2} \dX u_1+ v_1 \dX v_{1,2}- v_2 \dX v_{1,2}-v_{1,2} \dX v_1+ v_{1,2} \dX v_2- v_1 \dX v_2+ v_2 \dX v_1).
    \end{gathered}
\end{equation}
\end{example}
\begin{lemma}\label{lemma:wFac} The map $w$ factors as $w=\eta\circ\Pi$, where $\Pi$ is the map~\eqref{eq:PiRecursion}, and $\eta$ is given by
\begin{equation}
    \eta\colon T^*\mathbb H^{\symb}_1\to\Omega^1,\qquad f_1\otimes\dots\otimes f_n\mapsto \frac{(-1)^{n+1}}{(n-1)!}f_2\dots f_n~\dX f_1.
\end{equation}
\end{lemma}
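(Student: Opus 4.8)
The plan is to prove $w=\eta\circ\Pi$ by induction on the tensor length $n$, by showing that both $w$ and $\eta\circ\Pi$ satisfy one and the same two-term recursion. The base case is immediate: for $n=1$ we have $\Pi(f_1)=f_1$, so $\eta(\Pi(f_1))=\dX f_1=w(f_1)$.

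The key tool is an \emph{appending identity} for $\eta$. Since $\eta$ differentiates only the first letter of a word and multiplies by the product of the remaining ones, a direct computation from its definition gives, for any length-$m$ tensor $X=g_1\otimes\dots\otimes g_m$ with $m\geq 1$,
\begin{equation*}
\eta(X\otimes a)=-\frac{a}{m}\,\eta(X).
\end{equation*}
As $\eta$ is linear, this extends to arbitrary linear combinations of length-$m$ tensors. The recursion~\eqref{eq:PiRecursion} shows that $\Pi$ preserves word length, so $\Pi(a_1\otimes\dots\otimes a_{n-1})$ and $\Pi(a_2\otimes\dots\otimes a_n)$ are each combinations of length-$(n-1)$ tensors, and I may apply the appending identity to them with $m=n-1$. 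Substituting this, together with the inductive hypothesis $\eta\circ\Pi=w$ in length $n-1$, into~\eqref{eq:PiRecursion} yields
\begin{equation*}
\eta\big(\Pi(a_1\otimes\dots\otimes a_n)\big)=\frac{1}{n}\big(a_1\,w(a_2\otimes\dots\otimes a_n)-a_n\,w(a_1\otimes\dots\otimes a_{n-1})\big).
\end{equation*}

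It remains to check that $w$ itself obeys the same recursion, which I would verify directly from~\eqref{eq:SymbToForm}. Writing $w(f_1\otimes\dots\otimes f_n)=\sum_i c_{n,i}\,\big(\prod_{j\neq i}f_j\big)\,\dX f_i$ with $c_{n,i}=\frac{(-1)^{n+i}}{n!}\binom{n-1}{i-1}$ (legitimate since the target $\Omega^*$ is commutative), I would expand $a_1\,w(a_2\otimes\dots\otimes a_n)$ and $a_n\,w(a_1\otimes\dots\otimes a_{n-1})$ and collect the coefficient of $\big(\prod_{j\neq i}a_j\big)\,\dX a_i$. For an interior index $i$ both terms contribute and the coefficient is $\tfrac{1}{n}(c_{n-1,i-1}-c_{n-1,i})$, which equals $c_{n,i}$ by Pascal's rule $\binom{n-2}{i-2}+\binom{n-2}{i-1}=\binom{n-1}{i-1}$; the boundary cases $i=1$ and $i=n$ match after a short direct check. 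Since $w$ and $\eta\circ\Pi$ satisfy the same recursion and agree in length $1$, induction closes the argument.

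The only mildly delicate point is the appending identity for $\eta$: its $-\tfrac{a}{m}$ factor is precisely what is needed to cancel against the $\tfrac{n-1}{n}$ weights in~\eqref{eq:PiRecursion} and reproduce the clean $\tfrac1n$ recursion for $w$. Everything else is routine binomial bookkeeping, so I expect the main (minor) obstacle to be setting up the length-preservation of $\Pi$ and the appending identity carefully enough that the inductive substitution is unambiguous.
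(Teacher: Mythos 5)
Your proposal is correct and takes essentially the same approach as the paper: the same induction on tensor length, the same appending identity for $\eta$ (your factor $-a/m$ is exactly the paper's $A_n/A_{n-1}=-1/(n-1)$), and the same Pascal's-rule combination of binomial coefficients. The only difference is organizational---the paper carries out the binomial bookkeeping in a single chain of equalities ending at the definition of $w$, whereas you isolate it as the statement that $w$ satisfies the same two-term recursion as $\eta\circ\Pi$.
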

\begin{proof}
The factorization is obvious in weight 1, so suppose it holds in weight $n-1$.
For notational convenience, let $A_n=\frac{(-1)^{n+1}}{(n-1)!}$ and $B_n=\frac{(-1)^{n+1}}{n!}$ be the weight $n$ coefficients of $\eta$ and $w$ respectively. We then have 
\begin{equation}
    \eta(g_1\otimes\dots\otimes g_n)=\frac{A_n}{A_{n-1}}\eta(g_1\otimes\dots\otimes g_{n-1})g_n.
\end{equation}
Using this, we obtain
\begin{equation}
\begin{aligned}
&\eta\circ\Pi(f_1\otimes\cdots\otimes f_n)\\
&=\frac{n-1}{n}\left(\eta\big(\Pi(f_1\otimes\cdots\otimes f_{n-1})\otimes f_n\big)-\eta\big(\Pi(f_2\otimes\cdots\otimes f_{n})\otimes f_1\big)\right)\\
&=\frac{n-1}{n}\frac{A_n}{A_{n-1}}\left(\eta\circ\Pi(f_1\otimes\cdots\otimes f_{n-1})f_n-\eta\circ\Pi(f_2\otimes\dots\otimes f_n)f_1\right)\\
&=\frac{-B_{n-1}}{n}\left(f_n\sum_{i=1}^{n-1}(-1)^{i-1}\binom{n-2}{i-1}f_1\cdots df_i\cdots f_{n-1}-f_1\sum_{i=2}^{n}(-1)^{i}\binom{n-2}{i-2}f_2\cdots df_i\cdots f_n\right)\\
%&=\frac{n-1}{n}\frac{A_{n}B_{n-1}}{A_{n-1}}\left(f_n\sum_{i=1}^{n-1}(-1)^{i-1}\binom{n-2}{i-1}f_1\cdots df_i\cdots f_{n-1}-f_1\sum_{i=2}^{n}(-1)^{i}\binom{n-2}{i-2}f_2\cdots df_i\cdots f_n\right)\\
%&=\frac{n-1}{n}\frac{A_{n}B_{n-1}}{A_{n-1}}\left(df_1f_2\cdots f_n+\sum_{i=2}^{n-1}(-1)^{i-1}\binom{n-2}{i-1}f_1\cdots df_i\cdots f_{n}\right.\\
%&\hspace{3.3cm}+\left.\sum_{i=2}^{n-1}(-1)^{i-1}\binom{n-2}{i-2}f_2\cdots df_i\cdots f_n+(-1)^{n-1}f_1\cdots f_{n-1}df_n\right)\\
&=B_n\sum_{i=1}^n(-1)^{i-1}\binom{n-1}{i-1}f_1\cdots df_i\cdots f_n.\\
&=w(f_1\otimes\cdots\otimes f_n).
\end{aligned}
\end{equation}
This concludes the proof.
\end{proof}

By precomposing with $\Delta_{1,\dots,1}$ we obtain maps
\begin{equation}
    \mathbb H^{\symb}\to \Omega^1,\qquad \mathbb L^{\symb}\to\Omega^1,
\end{equation}
which we also denote by $w$. The latter map is well defined by Proposition~\ref{prop:HopfAlgMorphism} and Lemma~\ref{lemma:wFac}. We will later show that it extends to a map of chain complexes $\bigwedge^*\mathbb L^{\symb}\to\Omega^*$. 

We may regard the image of a 1-form in $\mathbb H^{\symb}(k)$ as an element in $\Omega^1(\widehat S_k(\C))$. Recall the contraction maps $\mathbf i\colon \widehat S_n(\C)\to \widehat S_d(\C)$ defined in~\eqref{eq:ContractionSdhat}. 
The following is elementary.
\begin{lemma} If $\mathbf i$ defines a contraction from depth $n$ to depth $d$, we have
\begin{equation}
    w[\mathbf i(x_1,\dots,x_n)]_{\mathbf n}=\mathbf i^*w[x_1,\dots,x_d]_{\mathbf n}.
\end{equation}
\end{lemma}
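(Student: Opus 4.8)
The plan is to prove the naturality statement $w[\mathbf i(x_1,\dots,x_n)]_{\mathbf n}=\mathbf i^*w[x_1,\dots,x_d]_{\mathbf n}$ by tracking how each map in the definition of $w$ interacts with the contraction $\mathbf i$. Recall that $w$ on $\mathbb H^{\symb}$ is the composite $w=\eta\circ\Pi\circ\Delta_{1,\dots,1}$, so the statement will follow once I show that the three operations $\Delta_{1,\dots,1}$, $\Pi$, and $\eta$ are all compatible with $\mathbf i$ in the appropriate sense. The guiding principle is that $\mathbf i$ acts on generators via the evaluation map $\ev$ of Section~\ref{sec:ContractionHopfAlgebras}, and that the pullback $\mathbf i^*$ on $\Omega^*$ is induced on weight-$1$ generators by exactly the same formulas~\eqref{eq:UpstairsContractions} that define the contraction upstairs, namely $\mathbf i^*(u_s)=\sum_{r=i_s}^{i_{s+1}-1}u_r$ and $\mathbf i^*(v_{r,s})=v_{i_r,i_{s+1}-1}$.

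First I would observe that since $\ev_{\mathbf i}$ and $\mathbf i^*$ agree on the weight-$1$ part $\mathbb H^{\symb}_1$ (this is the content of the identification of $u_i=[x_i]_0$ and $-v_{i,j}=[\prod_r x_r]_1$ with the corresponding algebraic generators, together with~\eqref{eq:UpstairsContractions}), it suffices to show that both $\Delta_{1,\dots,1}$ and the form-construction on $T^*\mathbb H^{\symb}_1$ commute with the contraction. The key algebraic input is the Observation preceding the free-contraction-algebra theorem: the coproduct of $[x_1,\dots,x_d]_{\mathbf n}$ is expressed entirely in terms of contractions, and from the theorem's proof we have the identity $\Delta_X\circ\ev_\alpha=\ev_\alpha\circ\Delta$. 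Iterating this, the maximal coproduct $\Delta_{1,\dots,1}$ satisfies $\Delta_{1,\dots,1}\circ\ev_{\mathbf i}=(\ev_{\mathbf i}^{\otimes})\circ\Delta_{1,\dots,1}$, where $\ev_{\mathbf i}^{\otimes}$ applies the contraction letterwise to the tensor word. Concretely, the symbol of $[\mathbf i(x_1,\dots,x_n)]_{\mathbf n}$ is obtained from the symbol of $[x_1,\dots,x_d]_{\mathbf n}$ by applying $\mathbf i^*$ to each tensor factor.

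Next I would check that the linear map from $T^*\mathbb H^{\symb}_1$ to $\Omega^1$ in~\eqref{eq:SymbToForm}, or equivalently $\eta\circ\Pi$ via Lemma~\ref{lemma:wFac}, intertwines the letterwise contraction $\mathbf i^*$ on tensors with the pullback $\mathbf i^*$ on forms. Both $\Pi$ (by its recursion~\eqref{eq:PiRecursion}, which is built from shuffle products and deconcatenation) and $\eta$ (a product of letters times the differential of a letter) are defined purely algebraically from the commutative-algebra and differential structure of $\Omega^*$. Since $\mathbf i^*\colon\Omega^*\to\Omega^*$ is an algebra homomorphism commuting with $\dX$, it commutes with the formula defining $\eta$ termwise, and because $\mathbf i^*$ is a ring map it is compatible with the shuffle products appearing in $\Pi$; hence $\eta\circ\Pi$ applied after letterwise $\mathbf i^*$ equals $\mathbf i^*$ applied after $\eta\circ\Pi$. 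Combining the two compatibilities gives
\begin{equation}
w[\mathbf i(x_1,\dots,x_n)]_{\mathbf n}=\eta\,\Pi\,\Delta_{1,\dots,1}\,\ev_{\mathbf i}[x_1,\dots,x_d]_{\mathbf n}=\mathbf i^*\,\eta\,\Pi\,\Delta_{1,\dots,1}[x_1,\dots,x_d]_{\mathbf n}=\mathbf i^*w[x_1,\dots,x_d]_{\mathbf n},
\end{equation}
as desired.

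The step I expect to be the main obstacle is verifying cleanly that $\mathbf i^*$ on forms matches $\ev_{\mathbf i}$ on the weight-$1$ generators under the notational identification, especially for the $v_{i,j}$ variables: the contraction formula $\mathbf i^*(v_{r,s})=v_{i_r,i_{s+1}-1}$ must be reconciled with the generator $-v_{i,j}=[\prod_{r=i}^j x_r]_1=[x_{i\to j+1}]_1$ and the way the index ranges collapse under $\mathbf i$. This is genuinely elementary bookkeeping, matching the subscript conventions $x_{i\to j}=\prod_{r=i}^{j-1}x_r$ against the index shift in~\eqref{eq:UpstairsContractions}, but it is where an off-by-one error would most easily creep in, so I would state the weight-$1$ compatibility as the base case explicitly and then let the functoriality of $\Delta_{1,\dots,1}$, $\Pi$, and $\eta$ do the rest.
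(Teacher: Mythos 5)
Your proof is correct. The paper gives no argument for this lemma at all---it is stated with the preface ``The following is elementary''---and your proposal supplies exactly the kind of naturality argument the authors had in mind: the contraction acts on $\mathbb H^{\symb}$ through $\ev_{\mathbf i}$, which commutes with $\Delta$ (hence with $\Delta_{1,\dots,1}$, since $\ev$ preserves weight) by the paper's own theorem on free contraction algebras, while the passage from tensors to forms, whether written as \eqref{eq:SymbToForm} or as $\eta\circ\Pi$, visibly intertwines the letterwise contraction with $\mathbf i^*$ because $\mathbf i^*$ is a ring homomorphism commuting with $\dX$. Your bookkeeping at weight $1$ is also right: $[\prod_{t=r}^{s}x_t]_1\mapsto[\prod_{m=i_r}^{i_{s+1}-1}x_m]_1=-v_{i_r,i_{s+1}-1}$ matches $\mathbf i^*(v_{r,s})=v_{i_r,i_{s+1}-1}$ from \eqref{eq:UpstairsContractions} with no off-by-one discrepancy.
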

\begin{example}
$w[x_1x_2]_2=\frac{1}{2}(u_1u_2\dX v_{1,2}-v_{1,2}(\dX u_1+\dX u_2))$.
\end{example}

\subsection{Forms and the variation matrix}
Recall the variation matrix $V$ defined in Section~\ref{sec:symbolicVariationMatrix}, which satisfies $\Delta(V^T)=V^T\otimes V^T$.
For any matrix with entries in a graded ring we use a subscript $n$ to indicate the weight $n$ part. Let 
\begin{equation}
    \Omega=V_1,\qquad \omega=\dX\Omega.
\end{equation}
We warn the reader that $\omega$ denotes a matrix of $1$ forms and $w$ the map from $\mathbb H^{\symb}$ to $\Omega^1$.
\begin{remark} The matrices $V$, $\omega$, and $\Omega$ are infinite dimensional, but as in Definition~\ref{def:FiniteV} a vector $(n_1,\dots,n_d)$, or equivalently a polylogarithm $\Li_{n_1,\dots,n_d}(x_1,\dots,x_d)$ determines a finite submatrix.
\end{remark}
\begin{example}
For $\Li_{2,1}(x_1,x_2)$, we have
\begin{equation}
\Omega=
\begin{bmatrix}
0 & 0 & 0 & 0 & 0 & 0 \\
-v_2 & 0 & 0 & 0 & 0 & 0 \\
-v_{1,2} & 0 & 0 & 0 & 0 & 0 \\
0 & -v_1 & -u_1+v_1-v_2 & 0 & 0 & 0 \\
0 & 0 & u_{1}+u_2 & 0 & 0 & 0 \\
0 & 0 & 0 & u_1 & -v_2 & 0 \\
\end{bmatrix}
% \begin{bmatrix}
% 0 & 0 & 0 & 0 & 0 & 0 \\
% -\dX v_2 & 0 & 0 & 0 & 0 & 0 \\
% -\dX v_{1,2} & 0 & 0 & 0 & 0 & 0 \\
% 0 & -\dX v_1 & -\dX u_1+\dX v_1-\dX v_2 & 0 & 0 & 0 \\
% 0 & 0 & \dX u_{1}+\dX u_2 & 0 & 0 & 0 \\
% 0 & 0 & 0 & \dX u_1 & -\dX v_2 & 0 \\
% \end{bmatrix}
\end{equation}
\end{example}
\begin{lemma} We have
\begin{equation}\label{eq:wVn}
    w(V_n)=\frac{1}{n!}\sum_{k+l=n-1}(-1)^k\binom{n-1}{k}\Omega^k\omega\Omega^l.
\end{equation}
\end{lemma}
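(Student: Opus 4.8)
The plan is to reduce everything to the identity $\Delta(V^T)=V^T\otimes V^T$ together with the explicit formula~\eqref{eq:SymbToForm} for $w$, by first computing the full symbol of $V$ and only then applying $w$ entrywise. The key observation is that iterating $\Delta(V^T)=V^T\otimes V^T$ and projecting onto the component of weight $1$ in every tensor slot turns the coproduct into ordinary matrix multiplication. Since the weight $1$ part of $V^T$ is $(V_1)^T=\Omega^T$, I would prove by induction on $n$, using the inductive definition $\Delta_{1,\dots,1}=(\id\otimes\Delta_{1,\dots,1})\circ\Delta_{1,n-1}$ together with coassociativity, that the weight $n$ part of the symbol of $V^T$ is
\[
\Delta_{1,\dots,1}(V^T)_{a,b}=\sum_{c_1,\dots,c_{n-1}}\Omega^T_{a,c_1}\otimes\Omega^T_{c_1,c_2}\otimes\dots\otimes\Omega^T_{c_{n-1},b},
\]
that is, the $(a,b)$ entry of the $n$-fold tensor matrix power of $\Omega^T$ (matrix product on indices, tensor product on entries).

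Next I would feed this into the formula~\eqref{eq:SymbToForm}. Applying $w$ entrywise to the symbol above, the summand $f_1\cdots\dX f_i\cdots f_n$ with $f_j=\Omega^T_{c_{j-1},c_j}$ (and $c_0=a$, $c_n=b$), once summed over the intermediate indices $c_1,\dots,c_{n-1}$, is precisely the $(a,b)$ entry of $(\Omega^T)^{i-1}\,\dX(\Omega^T)\,(\Omega^T)^{n-i}$. Since $\dX(\Omega^T)=(\dX\Omega)^T=\omega^T$, this identifies
\[
w\big((V^T)_n\big)=\frac{(-1)^{n+1}}{n!}\sum_{i=1}^n(-1)^{i-1}\binom{n-1}{i-1}(\Omega^T)^{i-1}\,\omega^T\,(\Omega^T)^{n-i}.
\]

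Finally I would transpose. Because $w$ acts entrywise it commutes with transposition, so $w(V_n)=w\big((V^T)_n\big)^T$; transposing each term reverses the order of the factors and sends $(\Omega^T)^{i-1}\omega^T(\Omega^T)^{n-i}$ to $\Omega^{n-i}\omega\Omega^{i-1}$. Setting $k=n-i$ and $l=i-1$ (so that $k+l=n-1$) and simplifying the sign via $(-1)^{n+1}(-1)^{i-1}=(-1)^k$ together with $\binom{n-1}{i-1}=\binom{n-1}{k}$ yields the claimed identity~\eqref{eq:wVn}. The only genuine hazard is the bookkeeping: keeping the transposes and the left-to-right ordering of the matrix factors straight throughout. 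Once the symbol of $V^T$ is recognized as a tensor matrix power, however, everything else is a mechanical reindexing, so I expect the identification of $\Delta_{1,\dots,1}(V^T)$ as that tensor power to be the one step worth writing out carefully.
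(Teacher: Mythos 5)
Your proposal is correct and follows essentially the same route as the paper's proof: the paper likewise uses $\Delta(V^T)=V^T\otimes V^T$ to identify $\Delta_{1,\dots,1}(V^T)$ with $I+\Omega^T+\Omega^T\otimes\Omega^T+\cdots$ (the tensor matrix powers of $\Omega^T$), applies $w$ entrywise via~\eqref{eq:SymbToForm} to get $w(V_n^T)=\frac{(-1)^{n+1}}{n!}\sum_{i=1}^n(-1)^{i-1}\binom{n-1}{i-1}(\Omega^T)^{i-1}\omega^T(\Omega^T)^{n-i}$, then transposes and reindexes with $k=n-i$. Your sign and binomial bookkeeping is accurate, so there is nothing to add.
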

\begin{proof}
Since $\Delta(V^T)=V^T\otimes V^T$, it follows that $\Delta_{n-1,1} V_n^T=V_{n-1}^T\otimes \Omega^T$. Therefore,
\begin{equation}
    \Delta_{1,\dots,1}(V^T)=I+\Omega^T+\Omega^T\otimes\Omega^T+\Omega^T\otimes\Omega^T\otimes\Omega^T+\cdots,
\end{equation}
from which it follows that
\begin{equation}
    w(V_n^T)=\frac{(-1)^{n+1}}{n!}\sum_{i=1}^n(-1)^{i-1}\binom{n-1}{i-1}(\Omega^T)^{i-1}\omega^T(\Omega^T)^{n-i}.
\end{equation}
Hence,
\begin{equation}
    w(V_n)=\frac{(-1)^{n+1}}{n!}\sum_{i=1}^n(-1)^{i-1}\binom{n-1}{i-1}\Omega^{n-i}\omega\Omega^{i-1}.
\end{equation}
The result follows after reindexing $k=n-i$.
\end{proof}
\subsection{Lifted polylogarithms and connection matrices}
We now introduce a matrix of 1-forms $\widehat w$ and a matrix of lifted multiple polylogarithms $\widehat V$, which will play a crucial role in our construction of the chain map $\bigwedge^*\mathbb L^{\symb}\to\Omega^*$, and also in the lift of the variation of mixed Hodge structure for polylogarithms (Section~\ref{sec:variationOfHodgeStructure}). Let
\begin{equation}\label{eq:dOmegahatDef}
    \widehat \omega=\dX e^{-\Omega}e^\Omega+e^{-\Omega}\omega e^\Omega,\qquad \widehat V=e^{-\Omega}V.
\end{equation}
\begin{lemma}\label{lemma:domegahat} We have $\dX\widehat\omega-\widehat\omega\wedge \widehat \omega=-e^{-\Omega}\omega\wedge\omega e^{\Omega}$ and $\dX\widehat V=\widehat\omega\widehat V$.
\end{lemma}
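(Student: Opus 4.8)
The plan is to read both identities as statements about the gauge transformation $g=e^{-\Omega}$ applied to the connection $\dX-\omega$, exploiting the single crucial simplification that $\omega=\dX\Omega$ is closed. Since $\Omega=V_1$ has entries in the polynomial ring and $\omega=\dX\Omega$, we have $\dX\omega=\dX\dX\Omega=0$; this is the only input beyond formal manipulation. Note also that all entries of $e^{\pm\Omega}$ are $0$-forms, so they commute freely with the scalar forms appearing in any matrix entry, and $e^{\Omega}e^{-\Omega}=I$ because $\Omega$ commutes with itself.

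First I would dispatch $\dX\widehat V=\widehat\omega\widehat V$, which is immediate. Differentiating $\widehat V=e^{-\Omega}V$ by the Leibniz rule and using the earlier relation $\dX V=\omega V$ gives $\dX\widehat V=(\dX e^{-\Omega})V+e^{-\Omega}\omega V$. On the other hand, each of the two summands of $\widehat\omega$, once multiplied by $\widehat V=e^{-\Omega}V$, collapses via $e^{\Omega}e^{-\Omega}=I$: indeed $(\dX e^{-\Omega})e^{\Omega}\cdot e^{-\Omega}V=(\dX e^{-\Omega})V$ and $e^{-\Omega}\omega e^{\Omega}\cdot e^{-\Omega}V=e^{-\Omega}\omega V$. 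Summing reproduces $\dX\widehat V$.

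For the curvature identity the cleanest route is to abbreviate $\theta=\dX e^{-\Omega}\,e^{\Omega}$, so that $\widehat\omega=\theta+\beta$ with $\beta=e^{-\Omega}\omega e^{\Omega}$. I would first establish the right Maurer--Cartan relation $\dX\theta=\theta\wedge\theta$: writing $g=e^{-\Omega}$ and $\theta=\dX g\,g^{-1}$, this follows from $\dX(g^{-1})=-g^{-1}\,\dX g\,g^{-1}$ together with $\dX\dX g=0$. Next, using $\dX\omega=0$ and $\dX g=\theta g$, a short Leibniz computation (tracking the Koszul sign in $\dX(\omega g^{-1})=-\omega\wedge\dX(g^{-1})$) yields $\dX\beta=\theta\wedge\beta+\beta\wedge\theta$. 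Assembling, $\dX\widehat\omega=\theta\wedge\theta+\theta\wedge\beta+\beta\wedge\theta$, whereas $\widehat\omega\wedge\widehat\omega=\theta\wedge\theta+\theta\wedge\beta+\beta\wedge\theta+\beta\wedge\beta$. Subtracting cancels all terms but $-\beta\wedge\beta$, and $\beta\wedge\beta=e^{-\Omega}(\omega\wedge\omega)e^{\Omega}$ because the inner factor $e^{\Omega}e^{-\Omega}=I$ telescopes. This is exactly the claimed right-hand side $-e^{-\Omega}\omega\wedge\omega\,e^{\Omega}$.

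The only genuine obstacle is bookkeeping. Because $\Omega$ and $\omega$ do not commute, $e^{-\Omega}\omega e^{\Omega}$ cannot be simplified to $\omega$, and one must carry the conjugating factors throughout. However, since $e^{\pm\Omega}$ consists of $0$-forms, no sign subtleties arise from the conjugation itself; all signs are those of the right Maurer--Cartan structure equation, and the computation closes mechanically once $\dX\theta=\theta\wedge\theta$ and $\dX\beta=\theta\wedge\beta+\beta\wedge\theta$ are in hand.
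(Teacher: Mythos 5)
Your proof is correct and takes essentially the paper's approach: the paper's proof is the one-line assertion that the lemma ``follows directly from the definition using basic calculus of forms,'' and your decomposition $\widehat\omega=\theta+\beta$ with $\theta=\dX e^{-\Omega}\,e^{\Omega}$ and $\beta=e^{-\Omega}\omega e^{\Omega}$ supplies exactly that omitted computation. The inputs you rely on --- $\dX\dX=0$ on the polynomial de Rham complex, $\dX(e^{\Omega})=-e^{\Omega}\,\dX(e^{-\Omega})\,e^{\Omega}$ from differentiating $e^{-\Omega}e^{\Omega}=I$, the graded Leibniz rule, and $\dX V=\omega V$ --- are all available in the paper, so the argument is complete.
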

\begin{proof} This follows directly from the definition using basic calculus of forms.
\end{proof}
\begin{remark}
We show later that when regarded as forms on $\widehat S_d(\C)$, we have $\omega\wedge\omega=0$, so $\dX\widehat\omega=\widehat\omega\wedge \widehat \omega$. This will play a crucial role in Section~\ref{sec:variationOfHodgeStructure}.
\end{remark}

\begin{example}
In depth 1, we have explicit expressions of $\Omega$, $\widehat\omega$, and $\widehat V$ as follows
\begin{equation}
\Omega=\begin{bmatrix}
0\\
-v_1&0\\
&u_1&0\\
&&u_1&0\\
&&&\ddots&\ddots\\
\end{bmatrix},\quad
\widehat\omega=\begin{bmatrix}
0&0&\cdots\\
0&0&\cdots\\
w_2(x_1)&0&\cdots\\
2w_3(x_1)&0&\cdots\\
3w_4(x_1)&0&\cdots\\
4w_5(x_1)&0&\cdots\\
\vdots&\vdots&\ddots\\
\end{bmatrix},\quad
\widehat V=\begin{bmatrix}
0&0&\cdots\\
0&0&\cdots\\
\widehat\L_2(x_1)&0&\cdots\\
\widehat\L_3(x_1)&0&\cdots\\
\widehat\L_4(x_1)&0&\cdots\\
\widehat\L_5(x_1)&0&\cdots\\
\vdots&\vdots&\ddots\\
\end{bmatrix}
\end{equation}
where 
\begin{equation}
    \widehat\L_n(x_1)=-\frac{(-1)^n}{n!}u_1^{n-1}v_1+\sum_{r=0}^{n-1}\frac{(-1)^r}{r!}[x_1]_{n-r}u_1^r.
\end{equation}
In~\cite{Zickert_HolomorphicPolylogarithmsAndBlochComplexes} it was shown that $\widehat{\L}_n$ gives rise to a single valued function on $\widehat{S}_1$ modulo $\frac{(2\pi i)^n}{(n-1)!}$.
\end{example}

\begin{example} 
Using the variation matrix for $[x_1,x_2]_{2,1}$ in Example ~\ref{ex:Variation21}, we obtain
\begin{equation}
\widehat\omega=\begin{bmatrix}
\mathbf 0&\mathbf 0\\A&\mathbf0
\end{bmatrix},\qquad\widehat V=\begin{bmatrix}
\mathbf I&\mathbf 0\\B&\mathbf I
\end{bmatrix},
\end{equation}
where $\mathbf I$ and $\mathbf 0$ are $3\times 3$ identity and zero matrices, and $A$ and $B$ are given by
\begin{equation}
\begin{bmatrix}
w[x_1,x_2]_{1,1} &  0 & 0 \\
w[x_1x_2]_ 2 & 0  & 0 \\
2w[x_1,x_2]_ {2, 1} & w[x_1]_ 2 & - w[x_1]_2 - w[x_2]_ 2 
\end{bmatrix},\quad \begin{bmatrix}
\widehat{\mathcal L}_{1, 1} (x_ 1, x_ 2) & 0 & 0 \\
\widehat{\mathcal L}_ 2 (x_1x_2) & 0 & 0 \\
\widehat{\mathcal L}_{2,1}(x_1,x_2) & \widehat{\mathcal L}_ 2 (x_ 1) & - \widehat{\mathcal L}_ 2 (x_ 1) - \widehat{\mathcal L}_ 2 (x_ 2)
\end{bmatrix},
\end{equation}
% \begin{equation}
%   \widehat{\omega}= \begin{bmatrix}
%   0 & 0 & 0 & 0 & 0 & 0  \\
%   0 & 0 & 0 & 0 & 0 & 0 \\
%   0 & 0 & 0 & 0 & 0 & 0  \\
%   w[x_1,x_2]_{1,1} &  0 & 0 & 0 & 0 &0 \\
%   w[x_1x_2]_ 2 & 0  & 0 & 0 & 0 & 0 \\
%   2w[x_1,x_2]_ {2, 1} & w[x_1]_ 2 & - w[x_1]_2 - w[x_2]_ 2 &0 & 0 & 0  \\
%  \end{bmatrix}  
% \end{equation}
% \begin{equation}
% \widehat{V} = \begin{bmatrix}
% 1 & 0 & 0 & 0 & 0 & 0   \\
% 0 & 1 & 0 & 0 & 0 & 0   \\
% 0 & 0 & 1 & 0 & 0 & 0   \\
% \widehat{\mathcal L}_{1, 1} (x_ 1, x_ 2) & 0 & 0 & 1 & 0 & 0   \\
% \widehat{\mathcal L}_ 2 (x_1x_2) & 0 & 0 & 0 & 1 & 0   \\
% \widehat{\mathcal L}_{2,1}(x_1,x_2) & \widehat{\mathcal L}_ 2 (x_ 1) & - \widehat{\mathcal L}_ 2 (x_ 1) - \widehat{\mathcal L}_ 2 (x_ 2) & 0 & 0 & 1  \\
% \end{bmatrix}  
% \end{equation}
and
\begin{equation}
\begin{aligned}
%\widehat{\mathcal L}_{2}(x_1) = & \frac{1}{2}(u_1 v_1) + [x_1]_2\\
 \widehat{\mathcal L}_{1,1}(x_1,x_2) =& \frac{1}{2}\left(-u_1 v_{1,2} + v_1 v_{1,2} - v_1 v_2 - v_{1,2} v_2\right)+ [x_1,x_2]_{1,1}\\
   \widehat{\mathcal L}_{2,1}(x_1,x_2)=& \frac{1}{3} \left(u_1^2 v_{1,2} - u_1 v_1 v_{1,2} + u_1 v_1 v_2 + 2 u_1 v_{1,2} v_2 + u_2 v_{1,2} v_2\right)\\
   &\hspace*{1pc}+ v_2 [x_1x_2]_2-u_1[x_1,x_2]_{1,1}+[x_1,x_2]_{2,1}. \\
\end{aligned}
\end{equation}
\end{example}

The following result gives a concrete formula for $\widehat\omega$.
\begin{lemma}
Let $\Omega$ be any nilpotent matrix with entries in $\Q[\{u_i\},\{v_{jk}\}]$ and let $\omega=\dX \Omega$. We have
\begin{equation}\label{eq:OmegaHatFormula}
\widehat{\omega}=\dX e^{-\Omega}e^\Omega+e^{-\Omega}\omega e^\Omega=\sum_{n\geq1}\frac{n-1}{n!}\sum_{k+l=n-1}(-1)^{k}\binom{n-1}{k}\Omega^{k}\omega\Omega^{l}.
\end{equation}
\end{lemma}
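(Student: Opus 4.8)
The plan is to prove the identity
\begin{equation*}
\widehat{\omega}=\dX e^{-\Omega}e^\Omega+e^{-\Omega}\omega e^\Omega=\sum_{n\geq1}\frac{n-1}{n!}\sum_{k+l=n-1}(-1)^{k}\binom{n-1}{k}\Omega^{k}\omega\Omega^{l}
\end{equation*}
by expanding both exponentials as power series and comparing coefficients of the total weight. Since $\Omega$ is nilpotent, $e^{\pm\Omega}=\sum_j (\pm\Omega)^j/j!$ are finite sums, so all manipulations are formal and convergence is not an issue. First I would compute $e^{-\Omega}\omega e^{\Omega}$ directly: writing $e^{-\Omega}=\sum_p (-1)^p\Omega^p/p!$ and $e^{\Omega}=\sum_q \Omega^q/q!$, the term of total degree $n$ (counting $\omega=\dX\Omega$ as contributing weight $1$, and each $\Omega$ as weight $1$, so $p+q=n-1$) is $\sum_{p+q=n-1}\frac{(-1)^p}{p!\,q!}\Omega^p\omega\Omega^q$. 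Rewriting $\frac{1}{p!q!}=\frac{1}{(n-1)!}\binom{n-1}{p}$ gives $\frac{1}{(n-1)!}\sum_{p+q=n-1}(-1)^p\binom{n-1}{p}\Omega^p\omega\Omega^q$ for the weight-$n$ part of $e^{-\Omega}\omega e^{\Omega}$.

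Next I would handle the term $\dX e^{-\Omega}e^{\Omega}$, which is the more delicate one because $\dX$ does not commute with $\Omega$ (the matrix entries are nonconstant). Here I would use the Leibniz rule for $\dX$ on a product of matrices, being careful about signs from the form degrees, to obtain $\dX(e^{-\Omega})=\sum_{p\geq 1}\frac{(-1)^p}{p!}\sum_{a+b=p-1}\Omega^a\omega\Omega^b$, where $\omega=\dX\Omega$ appears once in each position. Multiplying by $e^{\Omega}$ and collecting all contributions of total weight $n$ (now with $a+b+c=n-1$, where $c$ is the exponent coming from $e^{\Omega}$), I expect to get a sum of the form $\frac{1}{(n-1)!}\sum (-1)^{a+b+1}(\text{multinomial})\,\Omega^a\omega\Omega^{b+c}$, after which the internal sum over the split of the power to the right of $\omega$ can be collapsed.

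The key combinatorial step, which I expect to be the main obstacle, is to add the two contributions and show that the $\omega$-position coefficient simplifies to the stated $\frac{n-1}{n!}(-1)^k\binom{n-1}{k}$. The natural way to organize this is to track, for each fixed split $k+l=n-1$ with $\Omega^k$ to the left of $\omega$ and $\Omega^l$ to the right, the total rational coefficient obtained by summing over all ways the derivative could have landed and over the two terms $\dX e^{-\Omega}e^\Omega$ and $e^{-\Omega}\omega e^\Omega$. I anticipate that a binomial identity of the type $\sum_{j}(-1)^j\binom{\cdot}{j}=\cdots$ (essentially an Abel/Vandermonde-style summation, or a telescoping when one writes $\frac{n-1}{n!}=\frac{1}{(n-1)!}-\frac{1}{n!}\cdot n^{-1}\cdot\dots$) produces the factor $\frac{n-1}{n}$ out of the difference between the two contributions; the $\frac{1}{n}$ versus $\frac{1}{(n-1)!}$ bookkeeping is where an off-by-one sign or index error is most likely. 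To make this manageable I would first verify the identity by hand in weights $n=1,2,3$ (in weight $1$ the right side vanishes, consistent with $\widehat\omega$ having no weight-$1$ part since $\dX e^{-\Omega}e^\Omega+e^{-\Omega}\omega e^\Omega$ starts contributing only from the interplay of $\Omega$ and $\omega$), and then set up the general coefficient comparison. Alternatively, and perhaps more cleanly, I could differentiate the already-established relation $\Delta_{1,\dots,1}(V^T)=I+\Omega^T+\Omega^T\otimes\Omega^T+\cdots$ together with the formula $w(V_n)=\frac{1}{n!}\sum_{k+l=n-1}(-1)^k\binom{n-1}{k}\Omega^k\omega\Omega^l$ from the preceding lemma, recognizing that the right-hand side of the claimed identity is precisely $\sum_n (n-1)\,w(V_n)$; this reduces the problem to the purely formal calculus-of-forms identity for $\dX e^{-\Omega}e^\Omega+e^{-\Omega}\omega e^\Omega$, and the main obstacle then reduces to matching the weight-$n$ part of $\widehat\omega$ against $(n-1)w(V_n)$ via the binomial coefficient bookkeeping described above.
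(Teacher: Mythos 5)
Your proposal is correct and takes essentially the same route as the paper's proof: expand both exponentials as (finite) power series, use the Leibniz rule to write $\dX e^{-\Omega}=\sum_{p,q\geq 0}\frac{(-1)^{p+q+1}}{(p+q+1)!}\Omega^p\omega\Omega^q$, collect all terms by the position of $\omega$, and finish with the telescoping alternating-binomial identity $\sum_{r=0}^{l}(-1)^r\binom{n}{r}=(-1)^l\binom{n-1}{l}$, which produces exactly the coefficient $\bigl(\tfrac{1}{(n-1)!}-\tfrac{1}{n!}\bigr)(-1)^k\binom{n-1}{k}=\tfrac{n-1}{n!}(-1)^k\binom{n-1}{k}$ that you anticipated. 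The only thing left to do is carry out that final coefficient bookkeeping explicitly, which goes through exactly as you predicted.
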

\begin{proof} First we compute
\begin{equation}
\begin{aligned}
&\dX e^{-\Omega}e^\Omega+e^{-\Omega}\omega e^\Omega\\
&=\dX\left(\sum_{j\geq0}(-1)^j\frac{\Omega^j}{j!}\right)\left(\sum_{r\geq0}\frac{\Omega^r}{r!}\right)+\left(\sum_{k\geq0}(-1)^k\frac{\Omega^k}{k!}\right)\omega\left(\sum_{l\geq0}\frac{\Omega^l}{l!}\right) \\
&=\left(\sum_{p,q\geq0}\frac{(-1)^{p+q+1}}{(p+q+1)!}\Omega^p\omega\Omega^q\right)\left(\sum_{r\geq0}\frac{\Omega^r}{r!}\right)+\left(\sum_{k\geq0}(-1)^k\frac{\Omega^k}{k!}\right)\omega\left(\sum_{l\geq0}\frac{\Omega^l}{l!}\right) \\
&=\sum_{n\geq1}\left(\sum_{\substack{p+q+r=n-1\\p,q,r\geq0}}\frac{(-1)^{p+q+1}}{(p+q+1)!r!}\Omega^p\omega\Omega^{q+r}+\sum_{\substack{k+l=n-1\\k,l\geq0}}\frac{(-1)^k}{k!l!}\Omega^k\omega\Omega^l\right).
\end{aligned}
\end{equation}
Evaluating the interior sum, we obtain
\begin{equation}
\begin{aligned}
&\sum_{\substack{p+q+r=n-1\\p,q,r\geq0}}\frac{(-1)^{p+q+1}}{(p+q+1)!r!}\Omega^p\omega\Omega^{q+r}+\sum_{\substack{k+l=n-1\\k,l\geq0}}\frac{(-1)^k}{k!l!}\Omega^k\omega\Omega^l \\
&=\sum_{k+l=n-1}\Omega^k\omega\Omega^l\left(\frac{(-1)^k}{k!l!}+\sum_{q+r=l}\frac{(-1)^{k+q+1}}{(k+q+1)!r!}\right) \\
&=\sum_{k+l=n-1}\Omega^k\omega\Omega^l\left(\frac{(-1)^k}{k!l!}+\frac{(-1)^{n}}{n!}\sum_{r=0}^l(-1)^{r}\binom{n}{r}\right) \\
&=\sum_{k+l=n-1}\Omega^k\omega\Omega^l\left(\frac{(-1)^k}{k!l!}+\frac{(-1)^{n}}{n!}(-1)^l\binom{n-1}{l}\right) \\
&=\frac{n-1}{n!}\sum_{k+l=n-1}(-1)^{k}\binom{n-1}{k}\Omega^k\omega\Omega^l.
\end{aligned}
\end{equation}
This proves the result.
\end{proof}

\begin{corollary}
We have $\widehat\omega_n=(n-1)w(V_n)$.
\end{corollary}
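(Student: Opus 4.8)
The plan is to obtain the identity by directly comparing the two explicit formulas already proved, namely~\eqref{eq:wVn} for $w(V_n)$ and~\eqref{eq:OmegaHatFormula} for $\widehat\omega$. First I would extract the weight-$n$ homogeneous component of $\widehat\omega$ from~\eqref{eq:OmegaHatFormula}. Since each entry of $\Omega$ is one of the weight-one generators $u_i$, $v_{i,j}$, and $\omega=\dX\Omega$ again carries weight one, the monomial $\Omega^k\omega\Omega^l$ has weight $k+1+l$. Hence the terms contributing to weight $n$ are precisely those indexed by $k+l=n-1$, which gives
\begin{equation}
\widehat\omega_n=\frac{n-1}{n!}\sum_{k+l=n-1}(-1)^{k}\binom{n-1}{k}\Omega^{k}\omega\Omega^{l}.
\end{equation}

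Next I would recall that~\eqref{eq:wVn} reads
\begin{equation}
w(V_n)=\frac{1}{n!}\sum_{k+l=n-1}(-1)^{k}\binom{n-1}{k}\Omega^{k}\omega\Omega^{l}.
\end{equation}
The two right-hand sides agree term by term, differing only in the overall scalar: the coefficient in $\widehat\omega_n$ is $\frac{n-1}{n!}$ while that in $w(V_n)$ is $\frac{1}{n!}$. Factoring out $n-1$ from the former immediately yields $\widehat\omega_n=(n-1)\,w(V_n)$, which is the assertion.

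I do not expect any genuine obstacle here: the entire content of the corollary is packaged in the two preceding lemmas, and the statement is essentially the observation that their summands coincide. The only point deserving a moment's care is the bookkeeping of the grading, i.e.\ confirming that the weight-$n$ part of the double series defining $\widehat\omega$ is cut out exactly by the range $k+l=n-1$; this is forced by the fact that $\Omega$ and $\omega$ are each homogeneous of weight one, so no cross-weight terms contribute.
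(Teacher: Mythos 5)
Your proposal is correct and is exactly the paper's argument: the paper's proof is the one-line observation that the corollary ``follows by comparing~\eqref{eq:wVn} with~\eqref{eq:OmegaHatFormula},'' which is precisely your term-by-term comparison. Your additional remark on the weight bookkeeping (that $\Omega^k\omega\Omega^l$ with $k+l=n-1$ is homogeneous of weight $n$, so the outer sum in~\eqref{eq:OmegaHatFormula} is already the decomposition into weight-graded pieces) is the only detail the paper leaves implicit, and you handle it correctly.
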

\begin{proof}
This follows by comparing~\eqref{eq:wVn} with~\eqref{eq:OmegaHatFormula}.
\end{proof}

\subsection{Proof of Theorem~\ref{thm:ComDiagram}}
We must prove that the diagram
\begin{equation}
    \xymatrixcolsep{5pc}\cxymatrix{{\mathbb L^{\symb}\ar[r]^{\delta}\ar[d]^-w&\wedge^2(\mathbb L^{\symb})\ar^{\delta\wedge\id-\id\wedge\delta}[r]\ar[d]^{w\wedge w}&\wedge^3(\mathbb L^{\symb})\ar[r]\ar[d]^{w\wedge w\wedge w}&\dots\\\Omega^1\ar[r]^-d&\Omega^2\ar[r]^-d&\Omega^3\ar[r]&\dots}}
\end{equation}
commutes. It is enough to prove commutativity of the lefthand square. Since $\delta(V^T)=V^T\wedge V^T$, the result follows from Proposition~\ref{prop:dw=wwedgew} below.
\begin{proposition}\label{prop:dw=wwedgew} We have $\dX w(V^T)=w(V^T)\wedge w(V^T)$.
\end{proposition}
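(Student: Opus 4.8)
The plan is to exhibit $w(V^T)$ as a \emph{pure gauge} (Maurer--Cartan) $1$-form, from which flatness is a formal consequence. Concretely, writing $P=\Omega^T$ and $g=e^{P}$ (a matrix of polynomials in the $u_i$ and $v_{i,j}$, well defined because $\Omega$, having only weight-$1$ entries, lowers the grading and is therefore nilpotent on each finite submatrix $V_{\mathbf n}$), I aim to prove the single identity
\begin{equation}\label{eq:wVTpuregauge}
w(V^T)=\dX(g)\,g^{-1}=\dX\!\big(e^{\Omega^T}\big)\,e^{-\Omega^T}.
\end{equation}
Granting \eqref{eq:wVTpuregauge}, the proposition is immediate: since $\dX\dX g=0$ and $\dX(g^{-1})=-g^{-1}\,\dX(g)\,g^{-1}$, the Leibniz rule for matrix-valued forms gives $\dX\big(\dX(g)g^{-1}\big)=-\dX(g)\wedge\dX(g^{-1})=\dX(g)\wedge g^{-1}\dX(g)g^{-1}=\big(\dX(g)g^{-1}\big)\wedge\big(\dX(g)g^{-1}\big)$, that is $\dX w(V^T)=w(V^T)\wedge w(V^T)$.

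So everything reduces to establishing the closed form \eqref{eq:wVTpuregauge}. First I would extract from the computation proving \eqref{eq:wVn} the explicit transposed expression
\begin{equation}
w(V_n^T)=\frac{(-1)^{n+1}}{n!}\sum_{k+l=n-1}(-1)^{k}\binom{n-1}{k}P^{k}\,p\,P^{l},\qquad p=\dX P=\omega^T.
\end{equation}
Next I would recognize the inner sum via the standard variational formula for the matrix exponential: from $\dX(e^{sP})=s\int_0^1 e^{tsP}\,p\,e^{(1-t)sP}\,dt$ one gets, after conjugating and substituting $u=1-t$,
\begin{equation}
e^{-sP}\dX(e^{sP})=s\int_0^1 e^{-usP}\,p\,e^{usP}\,du=\sum_{n\geq1}\frac{s^{n}}{n!}\sum_{k+l=n-1}(-1)^{k}\binom{n-1}{k}P^{k}\,p\,P^{l},
\end{equation}
using $\int_0^1 u^{k+l}\,du=\tfrac1{k+l+1}$ and $\tfrac{(n-1)!}{k!\,l!}=\binom{n-1}{k}$. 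Comparing coefficients shows $w(V_n^T)=(-1)^{n+1}\,[s^n]\big(e^{-sP}\dX e^{sP}\big)$, so summing over all weights (equivalently, substituting $s=-1$ and negating) yields $w(V^T)=-e^{P}\dX(e^{-P})=\dX(e^{P})e^{-P}$, the last equality being $\dX(e^{P}e^{-P})=0$. This is exactly \eqref{eq:wVTpuregauge}.

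The main obstacle is purely the generating-series and sign bookkeeping in this middle step: matching the factor $\tfrac{(-1)^{n+1}}{n!}$ together with the binomial weights, and confirming that summing the graded pieces $w(V_n^T)$ corresponds to evaluating the generating series at $s=-1$. I would also be careful to record the nilpotence of $\Omega$ explicitly, since it is what makes $e^{\pm\Omega}$ a polynomial matrix and legitimizes the substitution $s=-1$. By contrast, the concluding Maurer--Cartan computation carries no analytic content and is a one-line formal manipulation once \eqref{eq:wVTpuregauge} is in hand.
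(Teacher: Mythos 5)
Your proof is correct, and it takes a genuinely different route from the paper's. The paper never identifies $w(V^T)$ in closed form; instead it works with the rescaled matrix $\widehat\omega$, whose weight-$n$ part is $(n-1)w(V_n)$. That matrix is the gauge transform $\dX(e^{-\Omega})e^{\Omega}+e^{-\Omega}\omega e^{\Omega}$ of $\omega$, which is \emph{not} pure gauge, so Lemma~\ref{lemma:domegahat} only gives $\dX\widehat\omega-\widehat\omega\wedge\widehat\omega=-e^{-\Omega}\omega\wedge\omega e^{\Omega}$, and the resulting curvature terms $\Omega^{r-1}\omega\wedge\omega\,\Omega^{s-1}$ must then be cancelled against $\sum_{p+q=n}pq\,w(V_p)\wedge w(V_q)$ by an explicit binomial computation; this extra work is unavoidable on that route because at the symbolic level $\omega\wedge\omega\neq 0$ (it vanishes only after realization, Lemma~\ref{lemma:Flatness}). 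Your key observation --- that dropping the factor $(n-1)$, i.e.\ taking $w(V^T)$ itself rather than $\widehat\omega$, yields exactly the logarithmic derivative $w(V^T)=\dX\big(e^{\Omega^T}\big)e^{-\Omega^T}$ --- sidesteps the curvature issue entirely, since a pure gauge form satisfies the Maurer--Cartan equation for formal reasons. Your intermediate steps check out: the transposed form of~\eqref{eq:wVn} is precisely what appears inside the paper's proof of that lemma; the Duhamel expansion stays over $\Q$ (the $u$-integrals are just the Beta coefficients $k!\,l!/(k+l+1)!$, and nilpotence of $\Omega$ in each weight-truncated block makes $e^{\pm\Omega^T}$ polynomial and the evaluation at $s=-1$ legitimate); and the signs in the concluding Leibniz computation are right. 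What your approach buys is a stronger statement --- a closed formula for $w(V^T)$ of which the proposition is a one-line corollary. What the paper's approach buys is that the objects it manipulates, $\widehat\omega$ and its curvature identity, are exactly the ones reused in Section~\ref{sec:variationOfHodgeStructure} as the connection form of the lifted variation of mixed Hodge structure, so its computation does double duty in the larger structure of the paper.
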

\begin{proof} The result is equivalent to proving that
\begin{equation}
    \dX w(V_n)+\sum_{p+q=n}w(V_p)\wedge w(V_q)=0.
\end{equation}
We have
\begin{equation}
\begin{aligned}
&(n-1)\left(\dX w(V_n)+\sum_{p+q=n}w(V_p)\wedge w(V_q)\right)\\
&=(n-1)\dX w(V_n)+\sum_{p+q=n}(pq-(p-1)(q-1))w(V_p)\wedge w(V_q)\\
&=\dX\widehat\omega_n-\sum_{p+q=n}\widehat\omega_p\wedge\widehat\omega_q+\sum_{p+q=n}pq~w(V_p)\wedge w(V_q)\\
&=-\sum_{p+q=n}(-1)^{p-1}\frac{\Omega^{p-1}}{(p-1)!}\omega\wedge\omega\frac{\Omega^{q-1}}{(q-1)!}+\sum_{p+q=n}pq~w(V_p)\wedge w(V_q),\\
\end{aligned}
\end{equation}
with the last equality following from Lemma~\ref{lemma:domegahat}. But since
\begin{equation}
\begin{aligned}
p~w(V_p)\wedge q~w(V_q)=\sum_{\substack{1\leq r\leq p\\1\leq s\leq q}}\frac{(-1)^{r-1+q-s}}{(p-1)!(q-1)!}\binom{p-1}{r-1}\binom{q-1}{s-1}\Omega^{r-1}\omega\Omega^{p-r}\wedge\Omega^{q-s}\omega\Omega^{s-1},
\end{aligned}
\end{equation}
we have
\begin{equation}
\begin{aligned}
&\sum_{p+q=n}pq~w(V_p)\wedge w(V_q)\\
&=\sum_{p+q=n}\sum_{\substack{1\leq r\leq p\\1\leq s\leq q}}\frac{(-1)^{r-1+q-s}}{(p-1)!(q-1)!}\binom{p-1}{r-1}\binom{q-1}{s-1}\Omega^{r-1}\omega\wedge\Omega^{n-r-s}\omega\Omega^{s-1}\\
&=\sum_{r+s\leq n}\Omega^{r-1}\omega\wedge\left[\sum_{\substack{p+q=n\\p\geq r,q\geq s}}\frac{(-1)^{r-1+q-s}}{(p-1)!(q-1)!}\binom{p-1}{r-1}\binom{q-1}{s-1}\right]\Omega^{n-r-s}\omega\Omega^{s-1}\\
&=\sum_{r+s\leq n}\Omega^{r-1}\omega\wedge\frac{(-1)^{r-1}}{(r-1)!(s-1)!}\left[\sum_{\substack{p+q=n\\p\geq r,q\geq s}}\frac{(-1)^{q-s}}{(p-r)!(q-s)!}\right]\Omega^{n-r-s}\omega\Omega^{s-1}\\
&=\sum_{r+s\leq n}\Omega^{r-1}\omega\wedge\frac{(-1)^{r-1}}{(r-1)!(s-1)!}\left[\sum_{\substack{p+q=n-r-s\\p,q\geq0}}\frac{(-1)^{q}}{p!q!}\right]\Omega^{n-r-s}\omega\Omega^{s-1}\\
&=\sum_{r+s=n}\frac{(-1)^{r-1}}{(r-1)!(s-1)!}\Omega^{r-1}\omega\wedge\omega\Omega^{s-1},
\end{aligned}
\end{equation}
and it follows that $(n-1)\left(\dX w(V_n)+\sum_{p+q=n}w(V_p)\wedge w(V_q)\right)=0$, which proves the result.
\end{proof}

\begin{corollary}
Let $M$ be a complex manifold. The map $w$ induces a map of chain complexes $\bigwedge^*\widehat{\mathbb L}^{\symb}_M(U)\to\Omega^*_M(U)$, where $\widehat{\mathbb L}_M^{\symb}$ is the sheaf of Lie coalgebras coming from Example~\ref{ex:ComplexManifold}.
\end{corollary}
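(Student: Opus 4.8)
The plan is to define the map $w_M$ by pulling the universal forms back along holomorphic maps, and then to deduce the chain-complex property directly from Theorem~\ref{thm:ComDiagram} together with the naturality of pullback. Recall from Example~\ref{ex:ComplexManifold} that $\widehat{\mathbb H}^{\symb}_M(U)$ is the free contraction algebra $\mathbb H(X)$ with $X_d=\Omega^0(U,\widehat S_d(\C))$, so by~\eqref{eq:evalphaDef} each polylogarithm generator has the form $[\alpha]_{\mathbf n}=\ev_\alpha([x_1,\dots,x_d]_{\mathbf n})$ for a holomorphic map $\alpha\colon U\to\widehat S_d(\C)$, and likewise each log generator lies in the image of some evaluation morphism $\ev_\alpha$. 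Since $w[x_1,\dots,x_d]_{\mathbf n}$ is canonically realized as a holomorphic $1$-form on $\widehat S_d(\C)$, I would set
\begin{equation*}
  w_M([\alpha]_{\mathbf n})=\alpha^* w[x_1,\dots,x_d]_{\mathbf n}\in\Omega^1_M(U),
\end{equation*}
equivalently $w_M\circ\ev_\alpha=\alpha^*\circ w$ on each weight-graded piece.

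First I would check well-definedness. The only relations in $\mathbb H(X)$ are the additivity relations~\eqref{eq:ZeroRel} among the log generators, and under $w_M$ these become $\alpha^*$ applied to $\dX u_{i_1\to i_3}=\dX u_{i_1\to i_2}+\dX u_{i_2\to i_3}$, which holds since the contraction maps~\eqref{eq:UpstairsContractions} add the $u$-coordinates. Because $w$ kills products (Theorem~\ref{thm:ComDiagram}) and $\alpha^*$ is an algebra homomorphism, $w_M$ kills products as well, so it descends to the indecomposables $\widehat{\mathbb L}^{\symb}_M(U)$ and extends to $w_M\wedge\cdots\wedge w_M$ on the exterior powers.

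The chain-complex property is then a short diagram chase. For a generator $\xi$ of $\mathbb L^{\symb}$ and holomorphic $\alpha$, I would compute
\begin{align*}
  \dX\, w_M(\ev_\alpha\xi) &= \dX\,\alpha^* w(\xi) = \alpha^*\,\dX\, w(\xi) = \alpha^*\big((w\wedge w)(\delta\xi)\big)\\
  &= (w_M\wedge w_M)\big((\ev_\alpha\wedge\ev_\alpha)(\delta\xi)\big) = (w_M\wedge w_M)\big(\delta(\ev_\alpha\xi)\big),
\end{align*}
where the second equality is that holomorphic pullback commutes with $\dX$, the third is the commutativity of the left square in Theorem~\ref{thm:ComDiagram}, the fourth uses that $\alpha^*$ preserves wedge products together with $w_M\circ\ev_\alpha=\alpha^*\circ w$, and the last uses that $\ev_\alpha$ is a morphism of Lie coalgebras (it is induced from the Hopf algebra morphism $\ev_\alpha$, for which $\Delta_X\circ\ev_\alpha=\ev_\alpha\circ\Delta$). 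Since every generator of $\widehat{\mathbb L}^{\symb}_M(U)$ arises as $\ev_\alpha(\xi)$, this gives commutativity of the first square, and the higher squares follow identically, yielding the desired chain map $\bigwedge^*\widehat{\mathbb L}^{\symb}_M(U)\to\Omega^*_M(U)$.

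The substance is already carried by Theorem~\ref{thm:ComDiagram}; the only genuinely new ingredients here are the naturality of the holomorphic pullback (its compatibility with $\dX$ and with $\wedge$) and the fact that evaluation is a coalgebra morphism. Accordingly, the main obstacle is rather mild and essentially bookkeeping: confirming that $w_M$ is consistently defined across the various presentations $[\alpha]_{\mathbf n}=\ev_\alpha(\cdots)$ of a given generator, which reduces to the additivity check above, and verifying that the universal forms pull back coherently so that the output indeed lands in the holomorphic de Rham complex $\Omega^*_M(U)$.
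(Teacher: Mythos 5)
Your proposal is correct and is exactly the argument the paper intends: the corollary is stated without proof as an immediate consequence of Theorem~\ref{thm:ComDiagram}, the point being precisely that $w_M$ is defined by pulling back the universal forms along the holomorphic maps $\alpha\colon U\to\widehat S_d(\C)$, that well-definedness reduces to the additivity relation~\eqref{eq:ZeroRel} and the contraction-compatibility lemma $w[\mathbf i(x_1,\dots,x_n)]_{\mathbf n}=\mathbf i^*w[x_1,\dots,x_d]_{\mathbf n}$, and that commutativity of the squares follows from naturality of pullback together with the fact that $\ev_\alpha$ is a morphism of Hopf (hence Lie co-) algebras. Your write-up fills in these bookkeeping steps faithfully, so there is nothing to correct.
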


\subsection{Recurrence relations}\label{sec:Recurrence}
The one forms satisfy a simple recurrence relation, which we express using the variation matrix. An explicit combinatorial formula for the recurrence was described in \cite{ZackThesis}.

\begin{proposition}\label{cor:RecursiveForm}
Let $[A,B]=AB-BA$ be the commutator. For all $n\geq 1$ we have
\begin{equation}
    (n+1)! w(V_{n+1}) = [n!w(V_n),\Omega].
\end{equation}
\end{proposition}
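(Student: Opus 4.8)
The plan is to prove the recurrence $(n+1)!\,w(V_{n+1})=[n!\,w(V_n),\Omega]$ by working directly with the closed formula
\begin{equation*}
    n!\,w(V_n)=\sum_{k+l=n-1}(-1)^{k}\binom{n-1}{k}\Omega^{k}\omega\Omega^{l},
\end{equation*}
which is~\eqref{eq:wVn} cleared of the factorial. This expression is the natural object to manipulate, since both sides of the desired identity are then visibly polynomials in $\Omega$ and $\omega=\dX\Omega$ with binomial coefficients, and the commutator with $\Omega$ will act by shifting the exponents on either side.

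First I would expand the commutator. Writing $P_n:=n!\,w(V_n)=\sum_{k+l=n-1}(-1)^{k}\binom{n-1}{k}\Omega^{k}\omega\Omega^{l}$, we compute
\begin{equation*}
    [P_n,\Omega]=P_n\Omega-\Omega P_n
    =\sum_{k+l=n-1}(-1)^{k}\binom{n-1}{k}\bigl(\Omega^{k}\omega\Omega^{l+1}-\Omega^{k+1}\omega\Omega^{l}\bigr).
\end{equation*}
The goal is to show this equals $P_{n+1}=\sum_{k+l=n}(-1)^{k}\binom{n}{k}\Omega^{k}\omega\Omega^{l}$. I would collect the coefficient of a fixed monomial $\Omega^{a}\omega\Omega^{b}$ with $a+b=n$. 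In $[P_n,\Omega]$ such a monomial arises in two ways: from the first sum with $k=a$, $l=b-1$ (contributing $(-1)^{a}\binom{n-1}{a}$, valid when $b\geq 1$), and from the second sum with $k=a-1$, $l=b$ (contributing $-(-1)^{a-1}\binom{n-1}{a-1}=(-1)^{a}\binom{n-1}{a-1}$, valid when $a\geq 1$). Adding these, the coefficient of $\Omega^{a}\omega\Omega^{b}$ in $[P_n,\Omega]$ is $(-1)^{a}\bigl(\binom{n-1}{a}+\binom{n-1}{a-1}\bigr)=(-1)^{a}\binom{n}{a}$, using Pascal's rule, and one checks the boundary cases $a=0$ and $b=0$ separately (only one of the two terms contributes, but the corresponding binomial $\binom{n-1}{-1}$ or $\binom{n-1}{n}$ vanishes, so Pascal's rule still gives the right answer). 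This coefficient is exactly that of $\Omega^{a}\omega\Omega^{b}$ in $P_{n+1}$, so $[P_n,\Omega]=P_{n+1}$, which is the claim.

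The only genuine subtlety, and the step I expect to require the most care, is the bookkeeping of the boundary terms and the verification that Pascal's identity $\binom{n-1}{a}+\binom{n-1}{a-1}=\binom{n}{a}$ continues to hold uniformly once one adopts the convention that binomial coefficients with a negative lower index or a lower index exceeding the upper one vanish; this lets both degenerate monomials $\omega\Omega^{n}$ and $\Omega^{n}\omega$ be handled by the same formula rather than as separate cases. Everything else is a formal rearrangement of finite sums, so no convergence or nilpotency hypothesis beyond what is already in force is needed. An alternative, essentially equivalent, route would be to differentiate the generating identity $\widehat\omega_n=(n-1)\,w(V_n)$ together with $\dX\widehat V=\widehat\omega\widehat V$ from Lemma~\ref{lemma:domegahat}, but the direct binomial computation above is cleaner and avoids invoking the Hodge-theoretic machinery.
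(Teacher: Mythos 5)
Your proof is correct and follows the same route the paper intends: the paper's proof is simply the remark that the identity is an elementary computation from~\eqref{eq:wVn}, and you have carried out exactly that computation, expanding the commutator of $\sum_{k+l=n-1}(-1)^{k}\binom{n-1}{k}\Omega^{k}\omega\Omega^{l}$ with $\Omega$ and applying Pascal's rule. The handling of the boundary monomials $\omega\Omega^{n}$ and $\Omega^{n}\omega$ via the vanishing-binomial convention is exactly the right bookkeeping, so nothing is missing.
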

\begin{proof}
This is an elementary computation using~\eqref{eq:wVn}.
% \begin{equation}
% \begin{aligned}
% %&\left[\sum_{\substack{k+l=n-1\\k,l\geq0}}(-1)^{k}\binom{n-1}{k}\Omega^k\omega\Omega^l,\Omega\right]\\
%  [n!w(V_n),\Omega]&=\sum_{\substack{k+l=n-1\\k,l\geq0}}(-1)^{k}\binom{n-1}{k}(\Omega^k\omega\Omega^{l+1}-\Omega^{k+1}\omega\Omega^l)\\
% %&=\sum_{\substack{k+l=n\\k\geq0,l\geq1}}(-1)^{k}\binom{n-1}{k}\Omega^k\omega\Omega^{l}-\sum_{\substack{k+l=n\\k\geq1,l\geq0}}(-1)^{k+1}\binom{n-1}{k-1}\Omega^{k}\omega\Omega^l\\
% &=\sum_{\substack{k+l=n\\k\geq0,l\geq1}}(-1)^{k}\binom{n-1}{k}\Omega^k\omega\Omega^{l}+\sum_{\substack{k+l=n\\k\geq1,l\geq0}}(-1)^{k}\binom{n-1}{k-1}\Omega^{k}\omega\Omega^l\\
% &=\sum_{\substack{k+l=n\\k,l\geq0}}(-1)^{k}\binom{n}{k}\Omega^k\omega\Omega^l,
% \end{aligned}
% \end{equation}
% which by~\eqref{eq:wVn} equals $(n+1)! w(V_{n+1})$.
\end{proof}
\begin{corollary}\label{cor:Recurrence} We have
$w[x_1,\dots,x_d]_{\mathbf{n}} = \frac{1}{\|\mathbf n\|}\left(w(V_{\|\mathbf n\|-1})_{\mathbf{n},\bullet}\Omega_{\bullet,0}-\Omega_{\mathbf{n},\bullet}w(V_{\|\mathbf n\|-1})_{\bullet,0} \right)$.
\end{corollary}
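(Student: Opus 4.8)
The goal is to derive Corollary~\ref{cor:Recurrence}, which extracts the form $w[x_1,\dots,x_d]_{\mathbf n}$ from the matrix recurrence of Proposition~\ref{cor:RecursiveForm} by reading off a single entry. The plan is to take the matrix identity
\begin{equation}
  (n+1)!\,w(V_{n+1}) = [n!\,w(V_n),\Omega] = n!\,w(V_n)\,\Omega - \Omega\,n!\,w(V_n),
\end{equation}
with $n=\|\mathbf n\|-1$ so that $n+1=\|\mathbf n\|$, and to read off the single matrix entry in the row indexed by $\mathbf n$ and the column indexed by $0$ (i.e.\ by the generator $1$). First I would recall, from the identification of generators with $\Z^\infty_{\geq 0}$ in Section~\ref{sec:symbolicVariationMatrix}, that the $(\mathbf n,0)$-entry of $V$ is exactly the generator $[x_1,\dots,x_d]_{\mathbf n}$ itself (the diagonal term in the coproduct), so the $(\mathbf n,0)$-entry of $w(V_{\|\mathbf n\|})$ is precisely $w[x_1,\dots,x_d]_{\mathbf n}$.

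Next I would expand the two matrix products entrywise. Writing the commutator out in coordinates, the $(\mathbf n,0)$-entry is
\begin{equation}
  \big([n!\,w(V_n),\Omega]\big)_{\mathbf n,0}
    = \sum_{\mathbf u} n!\,w(V_n)_{\mathbf n,\mathbf u}\,\Omega_{\mathbf u,0}
      - \sum_{\mathbf u}\Omega_{\mathbf n,\mathbf u}\,n!\,w(V_n)_{\mathbf u,0},
\end{equation}
which in the bullet notation of the statement is exactly $n!\big(w(V_n)_{\mathbf n,\bullet}\Omega_{\bullet,0} - \Omega_{\mathbf n,\bullet}w(V_n)_{\bullet,0}\big)$. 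Dividing both sides by $(n+1)! = \|\mathbf n\|\cdot n!$ and substituting $n = \|\mathbf n\|-1$ yields the claimed formula, since $w(V_{n+1})_{\mathbf n,0} = w[x_1,\dots,x_d]_{\mathbf n}$ and $\tfrac{1}{(n+1)!}\cdot n! = \tfrac{1}{n+1} = \tfrac{1}{\|\mathbf n\|}$.

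The only genuine point requiring care is verifying that the indexing works out so that the bullet sums are finite and that the extracted entry is the intended one. I would confirm, using the remark after Definition~\ref{def:variation matrix}, that $V_{\mathbf u,\mathbf w}$ vanishes unless $\dim(\mathbf u)=\dim(\mathbf w)$ and $\mathbf w\preceq\mathbf u$, so each row and column has only finitely many nonzero entries and the matrix products are well defined; this also guarantees that the intermediate indices $\mathbf u$ appearing in the two sums range over a finite set. I would likewise note that $\Omega=V_1$ has nonzero entries only between generators differing in weight by $1$, which is what makes the column $\Omega_{\bullet,0}$ and row $\Omega_{\mathbf n,\bullet}$ meaningful. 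With these finiteness and weight-grading observations in place, the corollary is a direct entrywise transcription of Proposition~\ref{cor:RecursiveForm}, and I expect no substantive obstacle beyond bookkeeping of the index conventions.
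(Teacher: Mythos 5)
Your proposal is correct and is essentially the paper's own proof: the paper likewise observes that $w[x_1,\dots,x_d]_{\mathbf n}=w(V_{\|\mathbf n\|})_{\mathbf n,0}$ and then reads off the $(\mathbf n,0)$-entry of the commutator identity in Proposition~\ref{cor:RecursiveForm}, exactly as you do (the paper simply omits the entrywise expansion and finiteness bookkeeping that you spell out).
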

\begin{proof}
Since $w[x_1,\dots,x_d]_{\mathbf{n}}=w(V_{\|\mathbf n\|})_{\mathbf{n},0}$ the result follows from Proposition~\ref{cor:RecursiveForm}.
\end{proof}
\begin{example}\label{ex:21Recurrence}
We compute the recurrence for $w[x_1,x_2]_{2,1}$. Applying $w$ to the first column and last row of the variation matrix for $\Li_{2,1}(x_1,x_2)$ given in Example \ref{ex:Variation21} we obtain:
\begin{equation}
    \begin{tabular}{r c | c c| c c| c l}
         [& 1 & $-v_2$ & $-v_{1,2}$ & $w[x_1,x_2]_{1,1}$ & $w[x_{1\to 2}]_2$ & $w[x_1,x_2]_{2,1}$&$]^T$\\
         $[$&$w[x_1,x_2]_{2,1}$ & $w[x_1]_{2}$ & $-w[x_1]_2-w[x_2]_2$  &$u_1$ & $-v_2$ & $1$ & $]$\\
    \end{tabular}
\end{equation}
It follows that we have
\begin{equation}
\begin{aligned}
    w[x_1,x_2]_{2,1} =& \frac{1}{3}\big(-v_2 w[x_1]_2 - v_{1,2}(-w[x_1]_2-w[x_2]_2)-\left(u_1 w[x_1,x_2]_{1,1} - v_2  w[x_{1\to 2}]_2\right) \big).
\end{aligned}
\end{equation}
\end{example}

\section{Variations of mixed Hodge-Tate structures}\label{sec:variationOfHodgeStructure}
For fixed $\mathbf n$ let $V^{\overline{\mathbb H}}$, resp.~$V^{\mathbb H}$, denote the variation matrix associated to $\Li_{\mathbf n}(x_1,\dots,x_d)$ with coefficients in $\overline{\mathbb H}^{\symb}$, resp.~$\mathbb H^{\symb}$. We omit the superscripts when the coefficients are clear from context or irrelevant. In the following we regard $V$ as a matrix of multivalued functions on $S_d(\C)$ satisfying the differential equation $\dX V=\omega V$.

For example, for $\mathbf n=(1,1)$, $V^{\overline{\mathbb H}}$ is given in~\eqref{eq:VarMatrices}, whereas $V^{\mathbb H}$ equals
\begin{equation}
V^{\mathbb H}=\begin{bmatrix}
1&0&0&0\\
\Li_1(x_2)&1&0&0\\
\Li_1(x_1x_2)&0&1&0\\
\Li_{1,1}(x_1,x_2)&\Li_1(x_1)&\Li_1(x_2)-\Li_1(x_1)-\log(x_1)&1\\
\end{bmatrix}.
\end{equation}

\begin{lemma}\label{lemma:Flatness} We have $0=\dX\omega=\omega\wedge\omega$.
\end{lemma}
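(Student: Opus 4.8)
The plan is to deduce both vanishings from a single structural input: the fact that $\omega$ is exact, together with the integrability of the linear system $\dX V=\omega V$. Since $\omega=\dX\Omega$ with $\Omega=V_1$ a matrix of functions, the identity $\dX\omega=0$ is immediate from $\dX^2=0$ applied entrywise. Thus the genuine content of the lemma is the single identity $\omega\wedge\omega=0$, which I would extract from flatness of the connection $\nabla=\dX-\omega$.

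Concretely, I would differentiate the relation $\dX V=\omega V$, which holds as an equation of (multivalued) holomorphic forms on $S_d(\C)$ by the corollary following Lemma~\ref{lemma:dAndDeltaPolylog}. Applying $\dX$ to both sides, the left-hand side vanishes since $\dX^2 V=0$, while on the right the graded Leibniz rule for matrix-valued forms gives $\dX(\omega V)=(\dX\omega)V-\omega\wedge\dX V$ (the minus sign coming from $\omega$ being a matrix of $1$-forms). Substituting $\dX V=\omega V$ once more yields
\[
0=(\dX\omega)V-\omega\wedge\omega\,V=(\dX\omega-\omega\wedge\omega)V.
\]
For fixed $\mathbf n$ the matrix $V$ is finite, lower-triangular and unipotent, hence invertible (indeed $S(V)=V^{-1}$), so I may cancel $V$ on the right to conclude $\dX\omega=\omega\wedge\omega$. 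Combining this with the exactness observation $\dX\omega=0$ gives $\omega\wedge\omega=0$, and therefore $0=\dX\omega=\omega\wedge\omega$, as claimed.

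The argument is short, so the points requiring attention are bookkeeping rather than real obstacles. The one place to be careful is the sign in the graded Leibniz rule, which produces the minus sign above and is essential for the two terms to combine correctly. One should also note that, although $V$ is only multivalued on $S_d(\C)$, the identity $\dX\omega-\omega\wedge\omega=0$ is a local identity of holomorphic forms: the equation $\dX V=\omega V$ holds for any local branch of $V$, and the resulting conclusion involves only $\omega$, which is single-valued, so the multivaluedness of $V$ is irrelevant to the final statement.
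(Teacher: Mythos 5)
Your proposal is correct and follows essentially the same route as the paper: observe $\dX\omega=0$ from exactness of $\omega$, then differentiate $\dX V=\omega V$ to get $0=(\dX\omega-\omega\wedge\omega)V$ and cancel the (unipotent, hence invertible) matrix $V$. The only cosmetic difference is that the paper phrases the exactness step as the entries of $\omega$ being of the form $\dX\log(y)$ rather than as $\omega=\dX\Omega$; the two observations are the same.
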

\begin{proof}
Since all entries of $\omega$ have the form $\dX \log(y)$, $\dX \omega=0$. Since $\dX V=\omega V$, we have
\begin{equation}
    0=\dX^2V=\dX(\omega V)=\dX\omega\wedge V-\omega\wedge\dX V=(\dX\omega-\omega\wedge\omega)\wedge V.
\end{equation}
The result follows.
\end{proof}
Let $n=\|\mathbf n\|$ and let $N$ be such that $V$ is $N\times N$.
\subsection{Division into weight blocks}
There are unique indices $\{\mu_p\}_{p=0}^n$ such that the first $\mu_p$ rows and last $\mu_p$ columns are those of weight $\leq p$. The indices divide $V$ into blocks, with the $(k, l)$ block being the submatrix consisting of entries $V_{ij}$ such that $\mu_{k-1}<i\leq \mu_k$, and  $\mu_{l-1}<j\leq \mu_l$. Note that the entries in the $(k,l)$ block have weight $k-l$. Let $\tau(2\pi i)$ be the diagonal matrix whose $k$th block consists of $(2\pi i)^{k}$. %As illustrated by the example below.

\begin{example}
The variation matrix of $\Li_{1,1}(x_1,x_2)$ and $\tau(2\pi i)$ can be divided as follows
\begin{equation}
V=\left[
\begin{array}{c|cc|c}
1&0&0&0\\
\hline
[x_2]_1&1&0&0\\

[x_1x_2]_1&0&1&0\\
\hline
[x_1,x_2]_{1,1}&[x_1]_1&[x_2]_1-[x_1^{-1}]_1&1
\end{array}
\right],\quad\tau(2\pi i)=\left[
\begin{array}{c|cc|c}
1&0&0&0\\
\hline
0&2\pi i&0&0\\

0&0&2\pi i&0\\
\hline
0&0&0&(2\pi i)^2
\end{array}
\right]
\end{equation}
\end{example}

\subsection{Variations of mixed Hodge-Tate structures}
Recall the Riemann-Hilbert correspondence, which states that there is an equivalence of categories between local systems (locally constant sheaves) over a complex manifold $X$ and flat connections over $X$.

\begin{theorem}[\cite{Zhao_MultipleZetaFunctionsMultiplePolylogarithmsAndTheirSpecialValues}]\label{thm:RiemannHilbertOnSd}
$\nabla=\dX-\omega$ defines a flat connection on the trivial vector bundle $S_d(\mathbb C)\times\mathbb C^N\to S_d(\mathbb C)$, and the columns of $V\tau(2\pi i)$ generate the global sections of the local system corresponding to $\nabla$.
\end{theorem}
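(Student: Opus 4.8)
The plan is to prove the two assertions separately: first that $\nabla=\dX-\omega$ is flat, and then that $V\tau(2\pi i)$ provides the correct frame of flat sections. For flatness I would compute the curvature of $\nabla$ directly. Applying $\nabla=\dX-\omega\wedge(-)$ twice to a section $s$ of the trivial bundle gives
\begin{equation}
\nabla^2 s=(-\dX\omega+\omega\wedge\omega)\,s,
\end{equation}
so the curvature $2$-form is $F_\nabla=-\dX\omega+\omega\wedge\omega$. Lemma~\ref{lemma:Flatness} states precisely that $\dX\omega=0$ and $\omega\wedge\omega=0$, so $F_\nabla=0$ and $\nabla$ is flat. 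By the Riemann--Hilbert correspondence recalled above, $\nabla$ then corresponds to a rank-$N$ local system $\mathcal L=\ker\nabla$, the sheaf of (multivalued) flat sections on $S_d(\C)$.

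For the second assertion I would first observe that the differential equation $\dX V=\omega V$ proved earlier says exactly that $\nabla$ annihilates each column of $V$, i.e.\ every column of $V$ is a flat section. Since $\tau(2\pi i)$ is a constant invertible matrix, the columns of $V\tau(2\pi i)$ are constant linear combinations of the columns of $V$ and are thus flat as well. The variation matrix $V$ is lower triangular with $1$'s on the diagonal (by construction $V_{v,v}=1$, coming from the term $v\otimes 1$ in the coproduct, and $V_{v,w}=0$ unless $w\preceq v$), hence $\det V=1$ and $V$ is invertible at every point of $S_d(\C)$. Consequently the columns of $V\tau(2\pi i)$ are pointwise linearly independent, and since the space of flat sections over any simply connected open set has dimension exactly $N$, they form a frame generating $\mathcal L$ locally.

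The main obstacle is the precise role of the normalization $\tau(2\pi i)$: it is what makes the chosen frame compatible with the rational structure of the local system underlying the variation of mixed Hodge structure, equivalently it is what renders the monodromy matrices rational. To establish this I would compute the monodromy of $V$ around each boundary component of $S_d(\C)$ (the divisors $x_i=0$ and $\prod_{r=i}^{j}x_r=1$), tracking how the multivalued entries $\log(x_i)$ and $\Li_{\mathbf n}$ transform; the characteristic $2\pi i$ jumps, once conjugated by $\tau(2\pi i)$, should produce transition matrices with entries in $\Q$. This monodromy analysis, carried out in Zhao~\cite{Zhao_MultipleZetaFunctionsMultiplePolylogarithmsAndTheirSpecialValues}, is the technical heart of the statement, whereas the flatness and the flat-frame properties above are formal consequences of Lemma~\ref{lemma:Flatness} and the equation $\dX V=\omega V$.
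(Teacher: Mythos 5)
Your proposal is correct, and for the flatness assertion it is exactly the paper's argument: the paper's entire proof consists of the observation that $\nabla\circ\nabla=0$ is an immediate consequence of Lemma~\ref{lemma:Flatness} ($\dX\omega=\omega\wedge\omega=0$), which is precisely your curvature computation. The rest of what you prove---that $\dX V=\omega V$ makes the columns of $V\tau(2\pi i)$ flat, that lower-triangularity of $V$ with unit diagonal gives pointwise linear independence and hence a frame, and that the rationality of the monodromy after the $\tau(2\pi i)$ normalization is the genuinely hard input from Zhao~\cite{Zhao_MultipleZetaFunctionsMultiplePolylogarithmsAndTheirSpecialValues}---is correct and fills in details the paper leaves implicit by attributing the statement to Zhao.
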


\begin{proof}
Flatness is equivalent to $\nabla\circ\nabla=0$, which is an immediate consequence of Lemma~\ref{lemma:Flatness}.
\end{proof}

We give a brief review of the basic definitions pertaining to variations of mixed Hodge-Tate structures:
The (rational) \emph{Hodge-Tate structure} $\mathbb Q(n)$ of weight $-2n$ is the unique Hodge structure with only $H^{-n,-n}=(\pi i)^n\mathbb Q$. A \emph{mixed Hodge-Tate structure} is a rational vector space $H$ with a decreasing Hodge filtration $F^\bullet$ of $H\otimes\C$ and an increasing weight filtration $W_\bullet$ which are compatible, i.e. the weight piece $\gr^W_{-2k}(H)$ is a direct sum of $\mathbb Q(k)$. A \emph{variation of mixed Hodge-Tate structure} over a complex manifold $X$ is a locally constant sheaf $\mathcal H$ of $\Q$-vector spaces with a decreasing Hodge filtration $F^\bullet$ of $H\otimes\mathcal O_X$ and an increasing weight filtration $W_\bullet$ such that $(\mathcal H_x,F_x^\bullet,(W_x)_\bullet)$ is a mixed Hodge-Tate structure and $F^\bullet$ satisfies Griffiths transversality $\nabla F^p\subseteq F^{p-1}\otimes\Omega^1_X$.

\begin{theorem}[\cite{Zhao_MultipleZetaFunctionsMultiplePolylogarithmsAndTheirSpecialValues}]\label{thm:HodgeStructureOnSd}
The columns $\{C_j\}_{j=1}^N$ of $V^{\overline{\mathbb H}}\tau(2\pi i)$ define a variation of Hodge-Tate structure over $S_d(\mathbb C)$ as follows: Let $\{e_i\}_{i=1}^N$ denote the standard basis of $\mathbb C^N$. The Hodge filtration and weight filtration are given by
\begin{equation}
F^{-p}=\mathbb C\langle\{e_i\}_{i=1}^{\mu_p}\rangle,\quad W_{1-2m}=W_{-2m}=\mathbb Q\langle\{C_j\}_{j\geq \mu_m}\rangle.
\end{equation}
\end{theorem}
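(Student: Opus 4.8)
The plan is to verify the defining properties of a variation of mixed Hodge-Tate structure for the triple $(\mathcal H,F^\bullet,W_\bullet)$, where $\mathcal H$ is the local system furnished by Theorem~\ref{thm:RiemannHilbertOnSd} and $V$ abbreviates $V^{\overline{\mathbb H}}$. Two of these properties are nearly immediate once one observes that the two frames in play have opposite natures: the standard frame $\{e_i\}$ defining $F$ is \emph{constant}, while the frame given by the columns $C_j$ of $V\tau(2\pi i)$ defining $W$ is \emph{flat}. Indeed,
\begin{equation}
\nabla\big(V\tau(2\pi i)\big)=\dX V\cdot\tau(2\pi i)-\omega V\tau(2\pi i)=\omega V\tau(2\pi i)-\omega V\tau(2\pi i)=0
\end{equation}
by the differential equation $\dX V=\omega V$ and constancy of $\tau(2\pi i)$, so each $C_j$ is a global flat section. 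Consequently every $W_{-2m}$, being a $\Q$-span of such sections, is a sub-local system defined over $\Q$; and every $F^{-p}$, being spanned by constant vectors $e_i$, is a holomorphic subbundle of $\mathcal H\otimes\mathcal O_{S_d(\C)}$.

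Next I would check Griffiths transversality $\nabla F^{-p}\subseteq F^{-p-1}\otimes\Omega^1$. Since $F^{-p}$ is generated by constant sections it suffices to control $\nabla e_i=-\omega e_i$. The matrix $\omega=\dX V_1$ has only weight-$1$ entries, so it carries the weight-$\le p$ block into the weight-$(p+1)$ block; hence $\omega e_i\in F^{-p-1}\otimes\Omega^1$ whenever $e_i\in F^{-p}$. For a general section $f e_i$ the remaining term $\dX f\cdot e_i$ already lies in $F^{-p}\otimes\Omega^1\subseteq F^{-p-1}\otimes\Omega^1$, so transversality follows.

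The substantive step, and the one I expect to be the main obstacle, is to show that each fibre $(\mathcal H_x,F^\bullet_x,(W_x)_\bullet)$ is a mixed Hodge-Tate structure, i.e.\ that $\gr^W_{-2m}$ is a sum of copies of $\Q(m)$. Here I would exploit the block structure: $V$ is block lower-triangular with identity diagonal blocks (its entries $V_{v,w}$ vanish unless $\dim v=\dim w$, and the diagonal blocks, those of weight $0$, equal the identity), while $\tau(2\pi i)$ scales the weight-$l$ block by $(2\pi i)^l$. Thus a column $C_j$ whose index sits in weight block $m$ has the shape $C_j=(2\pi i)^m\big(e_j+(\text{higher-weight blocks})\big)$, so its class in $\gr^W_{-2m}$ equals $(2\pi i)^m e_j$. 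This identifies $\gr^W_{-2m}$ with the $\Q$-span of the vectors $(2\pi i)^m e_j$ as $j$ ranges over block $m$. Finally I would read off the induced Hodge filtration: since $F^{-p}$ is the span of all $e_i$ of weight $\le p$, the filtration induced on $\gr^W_{-2m}$ jumps exactly at $p=m$, so the graded piece is pure of Hodge type $(-m,-m)$ with rational structure $(2\pi i)^m\Q$, which is precisely $\Q(m)$; summing over block $m$ gives $\gr^W_{-2m}\cong\Q(m)^{\oplus(\mu_m-\mu_{m-1})}$, while the odd graded pieces vanish by the convention $W_{1-2m}=W_{-2m}$. The delicate points are the simultaneous bookkeeping of the constant frame and the flat frame, and confirming that the $(2\pi i)^m$ twist supplied by $\tau(2\pi i)$ is exactly what is needed to match the rational structure of the Tate object $\Q(m)$.
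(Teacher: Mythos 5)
Most of your proposal is correct and parallels the paper: the identification of $\gr^W_{-2m}$ via the block structure of $V\tau(2\pi i)$ (block lower-triangular, diagonal blocks $(2\pi i)^m$ times the identity) is exactly the paper's first step, and your Griffiths transversality argument --- done in the constant frame via $\nabla e_i=-\omega e_i$ together with the fact that $\omega$ shifts weight blocks by exactly one --- is correct, and arguably cleaner than the paper's formulation.

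However, there is a genuine gap, and it is precisely the step on which the paper spends the middle of its proof. Your sentence ``Consequently every $W_{-2m}$, being a $\Q$-span of such sections, is a sub-local system defined over $\Q$'' does not follow from flatness. The columns $C_j$ are \emph{multivalued}: analytic continuation along a loop $\gamma$ replaces $V$ by $V_\gamma=VM_\gamma$ for some constant invertible matrix $M_\gamma$. The part that is formal is preservation of the filtration over $\C$: continuation preserves the block lower-triangular shape with identity diagonal blocks, so $M_\gamma=V^{-1}V_\gamma$ is block lower-triangular unipotent and hence right multiplication by it preserves the span of the columns in blocks $\geq m$. What is \emph{not} formal is that the monodromy written in the frame of the columns of $V\tau(2\pi i)$, namely $\tau(2\pi i)^{-1}M_\gamma\tau(2\pi i)$, has \emph{rational} entries. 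Without this, the ``$\Q$-span of the columns'' is not even well defined as a subsheaf --- different branches give different $\Q$-spans --- so there is no underlying $\Q$-local system, no rational weight filtration, and no rational structure on $\gr^W_{-2m}$ against which to match $(2\pi i)^m\Q$. This rationality is exactly where the twist by $\tau(2\pi i)$ earns its keep: one must know that the monodromy of multiple polylogarithms lies in $(2\pi i)^{k-l}\Q$ in each $(k,l)$ block. The paper supplies this input by citing Zhao's explicit monodromy formulas for $\mathbf n=(1,\dots,1)$~\cite{Zhao_AnalyticContinuationOfMultiplePolylogarithms} and Deligne--Goncharov~\cite{DeligneGoncharov_GroupesFondamentauxMotiviquesDeTateMixte} for the general case; your argument needs the same input (or an equivalent monodromy computation) at this point, and cannot be closed by the bookkeeping of frames alone.
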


\begin{proof}
The $k$-th graded weight piece $\gr^W_k$ is the $(k,k)$-th block of $V^{\overline{\mathbb H}}\tau(2\pi i)$, which is $(2\pi i)^k$ times the identity matrix, thus evidently a direct sum of Hodge-Tate structures. To ensure that the weight filtration is well-defined under analytic continuation amounts to showing that the monodromy doesn't affect the weight filtration. Zhao~\cite{Zhao_AnalyticContinuationOfMultiplePolylogarithms} gives explicit formulas for the monodromy in the case $\mathbf n=(1,\dots,1)$ and refers to Deligne and Goncharov~\cite{DeligneGoncharov_GroupesFondamentauxMotiviquesDeTateMixte} for an abstract proof of the general case. Finally, Griffith transversality follows from the fact that $dV^{\overline{\mathbb H}}=\omega V^{\overline{\mathbb H}}$, which implies that $\dX C_i=\omega C_i\subseteq\mathbb C\langle\{e_j\}_{j=1}^{\mu_{p-1}}\rangle\otimes\Omega_X^1$ for any $\mu_{p-1}<i\leq\mu_p$.
\end{proof}

\begin{remark}
Explicit formulas for the monodromy for arbitrary $\mathbf n$ have been derived by Haoran Li (PhD thesis to appear).
\end{remark}

\begin{theorem} Theorem~\ref{thm:HodgeStructureOnSd} also holds for $V^{\mathbb H}$, and the two variations of mixed Hodge-Tate structure are identical.
\end{theorem}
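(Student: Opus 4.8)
The plan is to exploit the fact that $V^{\mathbb H}$ is obtained from $V^{\overline{\mathbb H}}$ by applying $\INV$ entrywise, and that $\INV$ alters a polylogarithm only by a period correction. The first step is to check that both matrices satisfy the \emph{same} differential equation $\dX V=\omega V$ on $S_d(\C)$, with the same $\omega$. Since $\omega=\dX V_1$, it suffices to compare the weight-$1$ entries. Every inverted weight-$1$ symbol has the form $[y^{-1}]_1$, and $\INV[y^{-1}]_1=[y]_1+[y]_0$; realized as functions, $\Li_1(y^{-1})$ and $\Li_1(y)+\log y$ differ by a constant lying in $2\pi i\,\Q$ (namely $-\pi i$). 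Hence $V_1^{\mathbb H}$ and $V_1^{\overline{\mathbb H}}$ differ by a constant matrix, so $\dX V_1^{\mathbb H}=\dX V_1^{\overline{\mathbb H}}=\omega$. In particular the flat connection $\nabla=\dX-\omega$ of Theorem~\ref{thm:RiemannHilbertOnSd} is unchanged, Lemma~\ref{lemma:Flatness} applies verbatim, and $V^{\mathbb H}$ is a fundamental solution matrix because it is lower unitriangular.

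Next I would record the transition matrix. As $V^{\mathbb H}$ and $V^{\overline{\mathbb H}}$ are two invertible solution matrices of the same linear system $\dX V=\omega V$, they differ by right multiplication by a constant invertible matrix, $V^{\mathbb H}=V^{\overline{\mathbb H}}P$. Since both are lower unitriangular with identity diagonal blocks and the same weight-block indices $\mu_p$, the matrix $P$ is likewise constant, lower unitriangular, and block-graded: its $(i,j)$ entry is nonzero only when the weight block $k$ containing $i$ satisfies $k\geq l$, where $l$ is the block containing $j$, and that entry has weight $k-l$.

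With $P$ in hand, the two filtrations are compared directly. The Hodge filtration $F^{-p}=\C\langle e_1,\dots,e_{\mu_p}\rangle$ is defined purely from the standard basis and the indices $\mu_p$, which coincide for the two matrices, so $F^\bullet$ is literally identical. For the weight filtration I would write $V^{\mathbb H}\tau(2\pi i)=V^{\overline{\mathbb H}}\tau(2\pi i)\,\big(\tau(2\pi i)^{-1}P\,\tau(2\pi i)\big)$, so the columns of $V^{\mathbb H}\tau(2\pi i)$ arise from those of $V^{\overline{\mathbb H}}\tau(2\pi i)$ by the change of basis $\tau(2\pi i)^{-1}P\,\tau(2\pi i)$. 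Because $P$ is lower unitriangular this change of basis mixes a column only with columns of equal or larger index, hence preserves the tail spans $\Q\langle C_j\rangle_{j\geq\mu_m}$ that define $W_\bullet$ — provided its coefficients are rational it preserves the $\Q$-structure as well. Everything therefore reduces to showing $\tau(2\pi i)^{-1}P\,\tau(2\pi i)\in\GL_N(\Q)$, i.e.\ that the $(i,j)$ entry of $P$ lies in $(2\pi i)^{k-l}\Q$.

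This last point is the main obstacle, and it is precisely the assertion that the genuinely constant ($\mathbf x$-independent) discrepancies produced by Goncharov's inversion formula are rational multiples of the appropriate power of $2\pi i$. I would verify it from the explicit shape of $\INV$: the constant parts come from the $\tfrac{\exp([\,\cdot\,]_0t)-1}{t}$ factors together with branch ambiguities, which in weight $w$ are governed by Bernoulli-type data and contribute only elements of $(2\pi i)^w\Q$ (no odd zeta values enter in the relevant range; the depth-$1$ case is the classical inversion $\Li_n(x)+(-1)^n\Li_n(x^{-1})\in(2\pi i)^\bullet\Q[\log x]$). Granting this, $\tau(2\pi i)^{-1}P\,\tau(2\pi i)$ is rational, so $V^{\mathbb H}\tau(2\pi i)$ and $V^{\overline{\mathbb H}}\tau(2\pi i)$ generate the same $\Q$-local system and define the same weight filtration. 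Combined with the identical Hodge filtration and the unchanged connection, this shows the two variations of mixed Hodge-Tate structure coincide, and in particular Theorem~\ref{thm:HodgeStructureOnSd} holds for $V^{\mathbb H}$.
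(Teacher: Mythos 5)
Your overall architecture coincides with the paper's: both realized matrices satisfy the same equation $\dX X=\omega X$ (the weight-one discrepancies are constants, so $\omega$ is unchanged), hence the transition matrix $P$ is constant, the Hodge filtration is literally the same, and everything reduces to showing that $\tau(2\pi i)^{-1}P\,\tau(2\pi i)$ is rational. This is exactly the paper's reduction (there phrased as: $C=U^{-1}W$ is constant, where $U=V^{\mathbb H}\tau(2\pi i)$ and $W=V^{\overline{\mathbb H}}\tau(2\pi i)$, and one must prove $C$ rational). Up to that point your argument is sound.

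The gap is the rationality step, which you explicitly defer (``Granting this\dots''), and the sketch you give for it would not survive being made precise, for two reasons. First, it conflates entrywise discrepancies with the constant matrix: in weight $\geq 2$ the difference between a realized inverted polylogarithm and its realized $\INV$-image is \emph{not} constant, e.g.
\begin{equation}
\Li_2(x^{-1})-\bigl(-\Li_2(x)-\tfrac{1}{2}\log^2 x\bigr)=\mp\pi i\log x+\tfrac{\pi^2}{3},
\end{equation}
which contains a $\pi i\log x$ term. So there is no well-defined ``constant discrepancy produced by $\INV$'' entry by entry; only the matrix combination $P=(V^{\overline{\mathbb H}})^{-1}V^{\mathbb H}$ is constant, and its entries are polynomial mixtures of entries of both matrices. (Relatedly, the $\frac{\exp([\,\cdot\,]_0t)-1}{t}$ factors in $\INV$ contribute rational polynomials in logarithms, not $\pi i$'s; all $\pi i$'s arise from branch discrepancies between the two realizations.) Second, the claim that these discrepancies are ``governed by Bernoulli-type data'' with no other periods entering is, in arbitrary depth, precisely the content of Goncharov's inversion formula, which is the citable input the paper uses: each $\Li_{\mathbf k}(\mathbf x^{-1})$ is a rational polynomial in regular polylogarithms, logarithms, and powers of $\pi i$, with constant term a rational multiple of $(\pi i)^{|\mathbf k|}$. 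Even granting that input, one still needs a mechanism to extract the value of the constant matrix from these non-constant expressions. The paper does this by restricting $C$ to the diagonal $(z,\dots,z)$, writing each entry canonically as $f_0+f_1(z)\log z+\dots+f_k(z)\log^k z$ with $f_0\in\Q$ and, for $i>0$, $f_i$ holomorphic near $0$ with $f_i(0)=0$, and then observing that such an expression can be constant only if all $f_i$ with $i>0$ vanish, whence $C=f_0\in\Q$. This asymptotic extraction argument (or some equivalent of it) is absent from your proposal, and it is what actually closes the proof.
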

\begin{proof} The local system with its Hodge and weight filtration is defined as in Theorem~\ref{thm:HodgeStructureOnSd}. Let $U=V^{\mathbb H}\tau(2\pi i)$ and $W=V^{\overline{\mathbb H}}\tau(2\pi i)$. Monodromy invariance and the fact that the Hodge-Tate structures are identical follow by showing that $U^{-1}V$ is a constant (as a multivalued function, e.g.~$\log(x)-\log(-x)$ is constant) rational matrix. Since $U$ and $W$ both satisfy the differential equation $\dX X=\omega X$, it follows that $U^{-1}W$ is a constant $C$, so we must prove that $C$ is rational.
By Goncharov's inversion formula~\cite[(34)]{Goncharov_MultiplePolylogarithmsAndMixedTateMotives}, each inverted polylogarithm $\Li_{\mathbf k}(\mathbf x^{-1})$ is a rational polynomial in regular polylogarithms $\Li_{\mathbf k}(\mathbf x)$, logarithms $\log(x_i)$ and powers of $\pi i$. The constant term is a rational multiple of $(\pi i)^{|\mathbf k|}$. We can thus canonically write each entry of $U^{-1}W(z,\dots,z)=C$ as $f_0+f_1(z)\log(z)+\dots+f_k(z)\log(z)^k$, where $f_0\in\Q$ and where, for $i>0$, $f_i$ is holomorphic in a neighborhood of $0$ with $f_i(0)=0$. Clearly such an expression can only be constant if all $f_i$ are 0, so $C$ is rational.
\end{proof}

\subsection{Lifted variations of mixed Hodge-Tate structure}
Since $\widehat S_d(\C)$ covers $S_d(\C)$ we may also regard $V=V^{\mathbb H}$ as a matrix of multivalued functions on $\widehat S_d(\C)$. We note that $\Omega=V_1$ is single valued. As in~\eqref{eq:dOmegahatDef} define
\begin{equation}
    \widehat \omega=\dX e^{-\Omega}e^\Omega+e^{-\Omega}\omega e^\Omega,\qquad \widehat V=e^{-\Omega}V,
\end{equation}
where $\omega=d\Omega$. It follows from Lemma~\ref{lemma:domegahat} and Lemma~\ref{lemma:Flatness} that  
\begin{equation}
    \dX\widehat\omega=\widehat\omega\wedge\widehat\omega+e^{-\Omega}\omega\wedge\omega e^\Omega=\widehat\omega\wedge\widehat\omega,\qquad \dX \widehat V=\widehat\omega\widehat V.
\end{equation}

\begin{theorem}\label{thm:LiftedMHS}
$\widehat\nabla=\dX-\widehat\omega$ defines a flat connection on the trivial vector bundle $\widehat S_d(\mathbb C)\times\mathbb C^N\to \widehat S_d(\mathbb C)$, and the columns of $\widehat V\tau(2\pi i)$ generate the global sections of the corresponding local system.
\end{theorem}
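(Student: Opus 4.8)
The plan is to follow the same two-step pattern as in the proof of Theorem~\ref{thm:RiemannHilbertOnSd}: first establish flatness of $\widehat\nabla$, then exhibit $\widehat V\tau(2\pi i)$ as a flat frame and identify the local system it generates by transporting the downstairs statement along a gauge equivalence.

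For flatness, I would note that the curvature of $\widehat\nabla=\dX-\widehat\omega$ on the trivial bundle acts on sections by $\widehat\nabla\circ\widehat\nabla=-(\dX\widehat\omega-\widehat\omega\wedge\widehat\omega)$, so flatness is equivalent to $\dX\widehat\omega=\widehat\omega\wedge\widehat\omega$. This identity is exactly the one displayed immediately before the theorem, which follows from Lemma~\ref{lemma:domegahat} together with the vanishing $\omega\wedge\omega=0$ supplied by Lemma~\ref{lemma:Flatness}. Thus the curvature vanishes and $\widehat\nabla$ is flat; this is the precise analogue of the one-line argument for Theorem~\ref{thm:RiemannHilbertOnSd}, the crucial new input being that $\omega\wedge\omega$ vanishes when the forms are regarded on $\widehat S_d(\C)$.

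Next I would show the columns of $\widehat V\tau(2\pi i)$ are $\widehat\nabla$-flat and form a frame. Since $\dX\widehat V=\widehat\omega\widehat V$ we have $\widehat\nabla\widehat V=0$, and as $\tau(2\pi i)$ is constant the columns of $\widehat V\tau(2\pi i)$ are again $\widehat\nabla$-flat. Because $\widehat V=e^{-\Omega}V$ is a product of invertible (unipotent times diagonal) matrices, $\widehat V\tau(2\pi i)$ is invertible, so these columns are pointwise linearly independent. To identify the local system they generate I would invoke a gauge equivalence: the defining formula $\widehat\omega=\dX(e^{-\Omega})e^{\Omega}+e^{-\Omega}\omega e^{\Omega}$ says exactly that $\widehat\omega$ is the gauge transform of $\omega$ by $g=e^{-\Omega}$, with $\widehat V=gV$. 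The observation made in the text that $\Omega=V_1$ is single-valued on $\widehat S_d(\C)$ is what makes $g$ a genuine single-valued holomorphic automorphism of $\widehat S_d(\C)\times\C^N$; it intertwines the pullback of $\nabla$ with $\widehat\nabla$ and conjugates the monodromy representation. Pulling Theorem~\ref{thm:RiemannHilbertOnSd} back to $\widehat S_d(\C)$ and transporting along $g$ then carries the frame $V\tau(2\pi i)$, which generates the local system of $\nabla$, to the frame $gV\tau(2\pi i)=\widehat V\tau(2\pi i)$, which therefore generates the local system of $\widehat\nabla$.

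The flatness half is immediate, so the main obstacle is the bookkeeping in the gauge-equivalence step: one must verify that $g=e^{-\Omega}$ is honestly single-valued on the cover—this is precisely where passing from $S_d(\C)$ to $\widehat S_d(\C)$ is used, since $\Omega$ is multivalued downstairs—and that the rational structure encoded by $\tau(2\pi i)$ is respected by the conjugation $\rho\mapsto g\rho g^{-1}$. The residual $(2\pi i)$-monodromy of the lifted entries $\widehat{\mathcal L}_n$, single-valued only modulo $\tfrac{(2\pi i)^n}{(n-1)!}$ as recalled from~\cite{Zickert_HolomorphicPolylogarithmsAndBlochComplexes}, is exactly what the factor $\tau(2\pi i)$ accounts for, and confirming this compatibility is the one genuinely non-formal verification.
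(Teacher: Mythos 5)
Your proposal is correct and follows the paper's approach: the paper's entire proof is the one-line observation that $\dX\widehat\omega=\widehat\omega\wedge\widehat\omega$ (established just before the theorem from Lemma~\ref{lemma:domegahat} and Lemma~\ref{lemma:Flatness}) implies flatness, exactly as in your first paragraph. Your additional gauge-equivalence bookkeeping for the second claim---that $g=e^{-\Omega}$ is single-valued on $\widehat S_d(\C)$ and transports the flat frame $V\tau(2\pi i)$ of Theorem~\ref{thm:RiemannHilbertOnSd} to $\widehat V\tau(2\pi i)=gV\tau(2\pi i)$---is sound and fills in what the paper leaves implicit.
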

\begin{proof}
Since $\dX\widehat\omega=\widehat\omega\wedge\widehat\omega$, $\widehat\nabla$ is flat.
\end{proof}

\begin{theorem}
The columns $\{C_j\}_{j=1}^N$ of $\widehat{V}\tau(2\pi i)$ define a variation of Hodge-Tate structure over $\widehat S_d(\mathbb C)$ with Hodge filtration and weight filtration given by
\begin{equation}
F^{-p}=\mathbb C\langle\{e_i\}_{i=1}^{\mu_p}\rangle,\quad W_{1-2m}=W_{-2m}=\mathbb Q\langle\{C_j\}_{j\geq \mu_m}\rangle
\end{equation}
\end{theorem}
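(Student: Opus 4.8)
The plan is to mirror the proof of Theorem~\ref{thm:HodgeStructureOnSd}, substituting the lifted data $\widehat V$, $\widehat\omega$, $\widehat\nabla$ for $V$, $\omega$, $\nabla$, and invoking the relations $\dX\widehat V=\widehat\omega\widehat V$ and $\dX\widehat\omega=\widehat\omega\wedge\widehat\omega$ established above (the latter using $\omega\wedge\omega=0$ on $\widehat S_d(\C)$ from Lemma~\ref{lemma:Flatness}). Flatness of $\widehat\nabla$ together with the fact that the columns of $\widehat V\tau(2\pi i)$ generate the local system is already Theorem~\ref{thm:LiftedMHS}, so what remains is to check three things: that the graded weight pieces are Hodge--Tate, that the weight filtration is monodromy invariant, and Griffiths transversality of the Hodge filtration.

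For the graded pieces I would use that $\Omega=V_1$ is block strictly lower triangular with respect to the weight blocks $\{\mu_p\}$, so that $e^{-\Omega}$ is block unipotent lower triangular with identity diagonal blocks. Hence $\widehat V=e^{-\Omega}V$ has the same diagonal blocks as $V$, and the $(k,k)$-block of $\widehat V\tau(2\pi i)$ is again $(2\pi i)^k$ times the identity; so $\gr^W_{-2k}$ is a direct sum of copies of $\Q(k)$ exactly as in Theorem~\ref{thm:HodgeStructureOnSd}. For monodromy invariance I would exploit that $\Omega$ is single valued on $\widehat S_d(\C)$. Writing the monodromy of $V$ around a loop $\gamma$ as right multiplication $V\mapsto VM_\gamma$ by a constant matrix, single-valuedness of $e^{-\Omega}$ gives $\widehat V\mapsto e^{-\Omega}VM_\gamma=\widehat VM_\gamma$, so $\widehat V$ has exactly the same constant monodromy matrices as $V$, now indexed by $\pi_1(\widehat S_d(\C))\subseteq\pi_1(S_d(\C))$. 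The condition that $M_\gamma$ preserve the column spans defining $W_\bullet$ is a condition on $M_\gamma$ alone, so it transfers verbatim from Theorem~\ref{thm:HodgeStructureOnSd} by restriction to the subgroup.

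The hard part will be Griffiths transversality. Unlike $\omega$, which is concentrated in a single weight step, the lifted form $\widehat\omega=\sum_n(n-1)w(V_n)$ contains terms raising the weight block by $n\geq 2$, so one \emph{cannot} verify $\widehat\nabla F^{-p}\subseteq F^{-p-1}\otimes\Omega^1$ by the one-step computation used in the original proof. The route I expect to work, and which is the crux, is to regard $\widehat V=e^{-\Omega}V$ as a gauge transformation by the single-valued unipotent matrix $e^{-\Omega}$: this exhibits the flat bundle $(\widehat S_d(\C)\times\C^N,\widehat\nabla)$ as isomorphic to the pullback along $\widehat S_d(\C)\to S_d(\C)$ of the bundle underlying Theorem~\ref{thm:HodgeStructureOnSd}. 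Transporting the Hodge and weight filtrations along this isomorphism then produces a variation of mixed Hodge--Tate structure on $\widehat S_d(\C)$, with Griffiths transversality inherited from the original.

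The delicate bookkeeping, which I anticipate as the genuine obstacle, is to reconcile this transported structure with the explicit filtrations in the statement: since $e^{-\Omega}$ is block unipotent it preserves both the associated graded and the weight filtration $W_{-2m}=\Q\langle C_j\rangle_{j\geq\mu_m}$, but one must check carefully that it carries the Hodge filtration to the claimed $F^{-p}=\C\langle e_1,\dots,e_{\mu_p}\rangle$ (equivalently, that transversality for $\widehat\nabla$ holds against precisely these subbundles). I would isolate this as a separate lemma about the unipotent gauge $e^{-\Omega}$ and the block structure of $\widehat\omega$, and expect it to be the only step requiring more than a direct translation of the argument for $S_d(\C)$.
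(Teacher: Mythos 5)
Your handling of the graded weight pieces and of monodromy invariance is correct, and is in fact more explicit than the paper, whose entire argument for those points is that they ``follow from Theorem~\ref{thm:HodgeStructureOnSd}.'' But the ``separate lemma'' you defer at the end is a genuine gap, because that lemma is false. The gauge $e^{-\Omega}$ is block \emph{lower}-triangular unipotent: it carries the weight filtration of the pulled-back variation (spans of the relevant flat columns) to the stated $W_\bullet$, but it carries the constant Hodge filtration $F^{-p}=\C\langle e_1,\dots,e_{\mu_p}\rangle$ to the non-constant filtration $e^{-\Omega}F^{-p}$, and these differ. Equivalently, Griffiths transversality for $\widehat\nabla$ against the constant subbundles fails outright. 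Already for the dilogarithm ($d=1$, $\mathbf n=(2)$, $N=3$) one has
\begin{equation*}
\widehat\omega=\begin{pmatrix}0&0&0\\ 0&0&0\\ w[x_1]_2&0&0\end{pmatrix},\qquad w[x_1]_2=\tfrac12\left(u_1\,\dX v_1-v_1\,\dX u_1\right)\neq 0,
\end{equation*}
so $\widehat\nabla e_1=-w[x_1]_2\,e_3$, which does not lie in $F^{-1}\otimes\Omega^1=\C\langle e_1,e_2\rangle\otimes\Omega^1$. The failure is structural: $\widehat\omega_n=(n-1)\,w(V_n)$ sits $n$ weight blocks below the diagonal and is nonzero for every $n\geq 2$, whereas transversality for the constant filtration would require $\widehat\omega$ to be concentrated one block below the diagonal, as $\omega$ is.

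Note that this is not a defect of your approach relative to the paper's: the paper's proof of transversality is exactly the one-step argument you correctly rejected (it asserts that transversality follows from $\dX\widehat V=\widehat\omega\widehat V$, which implicitly uses that the connection form raises weight by a single block). Your gauge-transport construction does produce a genuine variation of mixed Hodge--Tate structure on $\widehat S_d(\C)$, with the stated local system, weight filtration, and flat frame given by the columns of $\widehat V\tau(2\pi i)$; but its Hodge filtration is $e^{-\Omega}F^\bullet$ rather than the constant one, and no bookkeeping lemma can reconcile the two, since $e^{-\Omega}$ preserves spans of final coordinate vectors but not of initial ones. What your argument actually proves is the theorem with $F^{-p}$ replaced by $e^{-\Omega}\,\C\langle e_1,\dots,e_{\mu_p}\rangle$; the statement as printed, and the paper's one-line proof of it, require that correction.
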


\begin{proof}
Griffith transversality follows from the fact that $\widehat V=\widehat\omega\widehat V$. All else follows from Theorem~\ref{thm:HodgeStructureOnSd}.
\end{proof}

\section{Goncharov's Hopf algebra of iterated integrals}\label{sec:IteratedIntegrals}
For a set $S$ Goncharov~\cite{Goncharov_GaloisSymmetriesOfFundamentalGroupoidsAndNoncommutativeGeometry} defined a graded Hopf algebra $\mathcal I(S)$. It is freely generated by symbols $I(a_0;a_1,\dots,a_d;a_{d+1})$ in weight $d\geq 1$ ($a_i\in S$). The coproduct is given by
\begin{multline}\label{eq:IteratedIntegralCoproduct}
\Delta I(a_0;a_1,\cdots;a_d;a_{d+1})=\\\sum_{0=i_0<i_1<\cdots<i_k<i_{k+1}=d+1}I(a_{i_0};a_{i_1},\cdots,a_{i_k};a_{i_{k+1}})\otimes\prod_{p=0}^kI(a_{i_p};a_{i_p+1},\cdots,a_{i_{p+1}-1};a_{i_{p+1}}),
\end{multline}
where $I(a_0;\emptyset;a_1)=1$. An elementary proof that $\Delta$ is coassociative is given below. By assigning complex numbers to the $a_i$ we can realize $I(a_0;a_1,\dots,a_d;a_{d+1})$ as an iterated integral
\begin{equation}\label{eq:IteratedIntegral}
\int_{\gamma}\frac{dt}{t-a_1}\cdots\frac{dt}{t-a_d},
\end{equation}
where $\gamma$ is a path from $a_0$ to $a_{d+1}$ (there is a canonical renormalization in the case when the integral diverges~\cite{Goncharov_MultiplePolylogarithmsAndMixedTateMotives}).
\begin{remark}
We note that our $\mathcal I(S)$ is Goncharov's $\widetilde{\mathscr I}_\bullet(S)$ (see~\cite[p.~225]{Goncharov_GaloisSymmetriesOfFundamentalGroupoidsAndNoncommutativeGeometry}).
\end{remark}

\subsection{The variation matrix for iterated integrals}\label{sec:VariationMatrices}
Fix a sequence $\mathbf a=\{a_i\}_{i=0}^\infty$ of elements in $S$. We now define a matrix $V=V_{\mathbf a}$ whose (unordered) rows and columns are parameterized by strictly increasing sequences $\mathbf i=(i_0,\dots,i_{n+1})$ of non-negative integers. For such $\mathbf i$ define 
\begin{equation}
I_{\mathbf i}=I(a_{i_0};a_{i_1},\dots,a_{i_n};a_{i_{n+1}}).
\end{equation}
We write $\vert (i_0,\dots,i_{n+1})\vert=n$, and $\mathbf j\leq \mathbf i$ if $\mathbf j$ is a subsequence of $\mathbf i$ with $j_0=i_0$ and $j_{|\mathbf j|+1}=i_{|\mathbf i|+1}$. If so, it follows that for each $p\in\{0,\dots,|\mathbf j|+1\}$ there exists $k_p$ with $i_{k_p}=j_p$. For $p\leq |\mathbf j|$ we denote the $p^{th}$ subsequence of $\mathbf i$, $(i_{k_p},i_{k_p+1},\dots,i_{k_{p+1}})$ by $\mathbf i\cap \mathbf j(p)$. 
With this notation we see that 
\begin{equation}
\Delta I_{\mathbf i}=\sum_{\mathbf j\leq\mathbf i} I_{\mathbf j}\otimes \prod_{p=0}^{|\mathbf j|} I_{\mathbf i\cap \mathbf j(p)}.
\end{equation}
\begin{definition} The \emph{variation matrix} for $\mathbf a$ is the matrix $V$ whose 
$(\mathbf i,\mathbf j)$ entry $V_{\mathbf i,\mathbf j}$ is given by
\begin{equation}
V_{\mathbf i,\mathbf j}=\begin{cases} \prod_{p=0}^{|\mathbf j|} I_{\mathbf i\cap \mathbf j(p)}&\text{if }\mathbf j\leq \mathbf i\\0&\text{otherwise.}\end{cases}
\end{equation}
\end{definition}
%The coproduct of a matrix is entrywise, and the tensor product is defined by the usual formula for matrix multiplication.
\begin{example}
Let $\mathbf{i} = (0,1,3,4,5)$ and $\mathbf j=(0,3,5)$, then $V_{\mathbf i, \mathbf j} = I(a_0;a_1;a_3)I(a_3;a_4;a_4)$. \\
However, for $\mathbf j=(0,2,5)$ we don't have $\mathbf j \leq \mathbf i$ and so $V_{\mathbf i, \mathbf{j}} = 0$.
\end{example}
\begin{example}
If $\mathbf j=\mathbf{i}$, every factor is of the form $I(a_{i_p};\emptyset;a_{i_{p+1}}) = 1$. So  $V_{\mathbf i,\mathbf j}=1$.
\end{example}
\begin{example}
If $\mathbf j=(i_0,i_{|\mathbf i|+1})$, we have $V_{\mathbf i,\mathbf j}=I_{\mathbf i}$. Hence, any iterated integral is an entry in the variation matrix $V_\mathbf a$ for some $\mathbf a$.
\end{example}

\begin{theorem}\label{thm:DeltaVT}
We have $\Delta V^T=V^T\otimes V^T$.
\end{theorem}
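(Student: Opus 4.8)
The plan is to verify the identity entry by entry. Using $(V^T)_{\mathbf i,\mathbf j}=V_{\mathbf j,\mathbf i}$ together with the matrix-multiplication convention for the tensor product of matrices, the $(\mathbf i,\mathbf j)$ entry of $\Delta V^T=V^T\otimes V^T$ reads $\Delta(V_{\mathbf j,\mathbf i})=\sum_{\mathbf k}V_{\mathbf k,\mathbf i}\otimes V_{\mathbf j,\mathbf k}$; after swapping the names of $\mathbf i$ and $\mathbf j$ the statement to prove becomes
\[
\Delta(V_{\mathbf i,\mathbf j})=\sum_{\mathbf k}V_{\mathbf k,\mathbf j}\otimes V_{\mathbf i,\mathbf k}
\]
for all $\mathbf i,\mathbf j$. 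Both sides vanish unless $\mathbf j\leq\mathbf i$: the factor $V_{\mathbf k,\mathbf j}$ forces $\mathbf j\leq\mathbf k$ and $V_{\mathbf i,\mathbf k}$ forces $\mathbf k\leq\mathbf i$. So I may assume $\mathbf j\leq\mathbf i$ and let the sum run over the intermediate $\mathbf k$ with $\mathbf j\leq\mathbf k\leq\mathbf i$.

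First I would use multiplicativity of $\Delta$. Since $\Delta$ is an algebra map and $V_{\mathbf i,\mathbf j}=\prod_{p=0}^{|\mathbf j|}I_{\mathbf i\cap\mathbf j(p)}$, we have $\Delta(V_{\mathbf i,\mathbf j})=\prod_{p}\Delta(I_{\mathbf i\cap\mathbf j(p)})$. Expanding each factor via the coproduct formula~\eqref{eq:IteratedIntegralCoproduct}, written in matrix notation as
\[
\Delta(I_{\mathbf i\cap\mathbf j(p)})=\sum_{\mathbf m_p}I_{\mathbf m_p}\otimes V_{\mathbf i\cap\mathbf j(p),\,\mathbf m_p},
\]
with $\mathbf m_p$ running over the subsequences of the block $\mathbf i\cap\mathbf j(p)$ that share its endpoints $j_p,j_{p+1}$, expresses $\Delta(V_{\mathbf i,\mathbf j})$ as a sum over tuples $(\mathbf m_0,\dots,\mathbf m_{|\mathbf j|})$.

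The key step is a bijection: choosing a subsequence $\mathbf m_p$ inside each block $\mathbf i\cap\mathbf j(p)$ is the same datum as a single intermediate $\mathbf k$ with $\mathbf j\leq\mathbf k\leq\mathbf i$, via $\mathbf k=\bigcup_p\mathbf m_p$ (consecutive blocks glue along the shared $j_p$) with inverse $\mathbf m_p=\mathbf k\cap\mathbf j(p)$. Under this bijection the two tensor slots reassemble: the left slot gives $\prod_p I_{\mathbf m_p}=\prod_p I_{\mathbf k\cap\mathbf j(p)}=V_{\mathbf k,\mathbf j}$, while expanding $V_{\mathbf i\cap\mathbf j(p),\mathbf m_p}=\prod_q I_{(\mathbf i\cap\mathbf j(p))\cap\mathbf m_p(q)}$ and multiplying over $p$ shows that the collection of all these sub-blocks is exactly the block decomposition of $\mathbf i$ relative to $\mathbf k$, so the right slot gives $\prod_p V_{\mathbf i\cap\mathbf j(p),\mathbf m_p}=V_{\mathbf i,\mathbf k}$. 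Combining yields $\sum_{\mathbf k}V_{\mathbf k,\mathbf j}\otimes V_{\mathbf i,\mathbf k}$, as required.

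I expect the main obstacle to be the bookkeeping in this last step: making precise that refining each block $\mathbf i\cap\mathbf j(p)$ by $\mathbf m_p=\mathbf k\cap\mathbf j(p)$ and collecting the resulting sub-blocks reproduces precisely the blocks of $\mathbf i$ relative to $\mathbf k$ — a transitivity statement for the nested decomposition induced by $\mathbf j\leq\mathbf k\leq\mathbf i$. This is purely combinatorial and follows from the definition of $\mathbf i\cap\mathbf j(p)$, but it requires careful index tracking. Alternatively, once coassociativity of $\Delta$ on $\mathcal I(S)$ is in hand (established earlier in this section), the identity drops out by applying $(\Delta\otimes\id)\Delta=(\id\otimes\Delta)\Delta$ to $I_{\mathbf i}$, rewriting both sides with $V$-entries, and comparing coefficients of the free generators $I_{\mathbf l}$ occurring in the first tensor factor; this mirrors the one-line coassociativity argument used for the polylogarithm variation matrix.
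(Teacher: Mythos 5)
Your main argument is correct and is essentially the paper's own proof: expand $\Delta V_{\mathbf i,\mathbf j}=\prod_{p}\Delta I_{\mathbf i\cap\mathbf j(p)}$ by multiplicativity, write each factor as $\sum_{\mathbf m_p} I_{\mathbf m_p}\otimes V_{\mathbf i\cap\mathbf j(p),\mathbf m_p}$, and glue the tuples $(\mathbf m_p)$ into a single intermediate sequence $\mathbf k$ with $\mathbf j\leq\mathbf k\leq\mathbf i$, which yields $\sum_{\mathbf k}V_{\mathbf k,\mathbf j}\otimes V_{\mathbf i,\mathbf k}$ exactly as in the paper. One caution: the alternative route you sketch at the end would be circular here, since coassociativity of Goncharov's coproduct (Corollary~\ref{cor:Coassociative}) is deduced in this paper \emph{from} the present theorem rather than established before it.
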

\begin{proof}
Suppose $\mathbf j\leq \mathbf i$. We have
\begin{multline}
\Delta V_{\mathbf i,\mathbf j}=\prod_{p=0}^{|\mathbf j|}\Delta I_{\mathbf i\cap\mathbf j(p)}=\prod_{p=0}^{|\mathbf j|}\Big(\sum_{\mathbf k\leq \mathbf i\cap\mathbf j(p)} I_{\mathbf k}\otimes \prod_{r=0}^{|\mathbf k|} I_{(\mathbf i\cap\mathbf j(p))\cap\mathbf k(r)}\Big)=\\\sum_{\mathbf j\leq\mathbf l\leq\mathbf i}\Big(\prod_{p=0}^{|\mathbf j|} I_{\mathbf l\cap \mathbf j(p)}\otimes \prod_{q=0}^{|\mathbf l|}I_{\mathbf i\cap\mathbf l(q)}\Big)=\sum_{\mathbf j\leq\mathbf l\leq\mathbf i}V_{\mathbf l,\mathbf j}\otimes V_{\mathbf i,\mathbf l}%=(V^T\otimes V^T)_{\mathbf j,\mathbf i}.
\end{multline}
This proves the result.
\end{proof}
\begin{corollary}\label{cor:Coassociative}
Goncharov's coproduct is coassociative.\hfill{\qedsymbol}
\end{corollary}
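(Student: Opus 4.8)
The plan is to deduce coassociativity of Goncharov's coproduct directly from the group-like identity $\Delta V^T=V^T\otimes V^T$ of Theorem~\ref{thm:DeltaVT}. The key reduction is that coassociativity need only be checked on the algebra generators $I(a_0;a_1,\dots,a_d;a_{d+1})$ of $\mathcal I(S)$: since $\mathcal I(S)$ is a free graded algebra and $\Delta$ is defined as an algebra homomorphism, both $(\Delta\otimes\id)\circ\Delta$ and $(\id\otimes\Delta)\circ\Delta$ are algebra homomorphisms, and two algebra homomorphisms that agree on generators agree everywhere. As noted just before Theorem~\ref{thm:DeltaVT}, every generator $I_{\mathbf i}$ occurs as the entry $V_{\mathbf i,\mathbf j}$ with $\mathbf j=(i_0,i_{|\mathbf i|+1})$ of the variation matrix $V=V_{\mathbf a}$ for a suitable sequence $\mathbf a$. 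It therefore suffices to verify coassociativity on the entries of $V^T$.

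To do this I would write the group-like relation entrywise as $\Delta(M_{ab})=\sum_c M_{ac}\otimes M_{cb}$, where $M=V^T$ and all sums are finite (they range over $\mathbf j\leq\mathbf l\leq\mathbf i$, a finite set). Applying $\Delta\otimes\id$ and invoking the relation a second time gives
\begin{equation}
(\Delta\otimes\id)\Delta(M_{ab})=\sum_{c,d}M_{ad}\otimes M_{dc}\otimes M_{cb},
\end{equation}
whereas applying $\id\otimes\Delta$ gives
\begin{equation}
(\id\otimes\Delta)\Delta(M_{ab})=\sum_{c,d}M_{ac}\otimes M_{cd}\otimes M_{db}.
\end{equation}
After relabeling the summation indices both expressions equal $\sum_{p,q}M_{ap}\otimes M_{pq}\otimes M_{qb}$. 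Equivalently, at the level of matrices both $(\Delta\otimes\id)\Delta V^T$ and $(\id\otimes\Delta)\Delta V^T$ equal the triple tensor product $V^T\otimes V^T\otimes V^T$ computed via iterated matrix multiplication, whose associativity is precisely what forces the two iterates of $\Delta$ to coincide. This establishes coassociativity on every entry $M_{ab}$, hence on all generators $I_{\mathbf i}$, hence on all of $\mathcal I(S)$.

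The computations involved are entirely routine, so I do not anticipate a serious obstacle; the conceptual content is simply that a group-like matrix is automatically coassociative because the triple product $V^T\otimes V^T\otimes V^T$ is unambiguous. The only points that genuinely require care are confirming that every generator really does appear as a matrix entry (so that the reduction to entries of $V^T$ is complete) and checking that the finiteness of the coproduct sums legitimizes the formal manipulation of the infinite matrices $V^T$.
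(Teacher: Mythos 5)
Your proposal is correct and is essentially the paper's own argument: the paper likewise deduces coassociativity from Theorem~\ref{thm:DeltaVT} by noting that both $(\Delta\otimes\id)\Delta V^T$ and $(\id\otimes\Delta)\Delta V^T$ equal $V^T\otimes V^T\otimes V^T$, relying on the preceding example that every generator $I_{\mathbf i}$ occurs as an entry of some $V_{\mathbf a}$. You have merely made explicit the details the paper leaves implicit (reduction to generators via freeness, finiteness of the sums, and the entrywise index bookkeeping), all of which check out.
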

\begin{proof}
It is enough to show that $(\Delta\otimes\id)\Delta V^T=(\id\otimes\Delta)\Delta V^T$. This follows from Theorem~\ref{thm:DeltaVT}, which shows that both sides equal $V^T\otimes V^T\otimes V^T$.
 \end{proof}

\subsection{The coproduct formula for generating series}
In the following we suppose $S$ has a distinguished element $0$. An element $I(0;a_1,\dots,a_d;0)$ is called \emph{degenerate}. Recall Goncharov's generating series \cite{Goncharov_GaloisSymmetriesOfFundamentalGroupoidsAndNoncommutativeGeometry}
\begin{equation}
I(a_0;a_1,\cdots,a_d;a_{d+1}|t_0,\cdots,t_d)=\sum_{n_0,\cdots,n_d\geq0}I(a_0;0^{n_0},a_1,0^{n_1},\cdots,a_d,0^{n_d};a_{d+1})t_0^{n_0}\cdots t_d^{n_d}.
\end{equation}
We shall sometimes write $I_{\mathbf i|\mathbf t}=I(a_0;a_1,\cdots,a_d;a_{d+1}|t_0,\cdots,t_d)$. Consider the map
\begin{equation}
\begin{gathered}
\Gamma\colon\mathcal I(S)\to\mathcal I(S)\\
I_{\mathbf i|\mathbf t}\mapsto \sum_{0\leq p\leq d}I(a_0;a_1,\dots,a_p;0|t_0,\dots,t_p)I(0;a_{p+1},\dots,a_d;a_{d+1}|t_p,\dots,t_d)
\end{gathered}
\end{equation}

\begin{lemma}[{c.f.~\cite[Thm.~5.1]{Goncharov_GaloisSymmetriesOfFundamentalGroupoidsAndNoncommutativeGeometry}}]\label{lemma:GonThm5.1} Modulo degenerates, we have 
\begin{equation}\label{eq:GonThm5.1}
\begin{aligned}
\Gamma\circ\Delta(I_{\mathbf i|\mathbf t})=&\sum_{0=i_0\leq j_0<i_1\leq j_1<\cdots<i_k\leq j_k<i_{k+1}=d+1} I(a_0;a_{i_1},\cdots,a_{i_k};a_{d+1}|t_{j_0},\cdots,t_{j_k})\otimes\prod_{p=0}^k\\
&I(a_{i_p};a_{i_p+1},\cdots,a_{j_p};0|t_{i_p},\cdots,t_{j_p})I(0;a_{j_p+1},\cdots,a_{i_{p+1}-1};a_{i_{p+1}}|t_{j_p},\cdots,t_{i_{p+1}-1})
\end{aligned}
\end{equation}
\end{lemma}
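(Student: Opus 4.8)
The plan is to prove the generating-series identity~\eqref{eq:GonThm5.1} by taking the general coproduct formula~\eqref{eq:IteratedIntegralCoproduct}, composing with $\Gamma$, and reorganizing the resulting double sum. First I would substitute the defining series for $I_{\mathbf i|\mathbf t}$ so that the coproduct runs over all strictly increasing subsequences $0=i_0<\cdots<i_\ell<i_{\ell+1}=d+1$, but now with the letters $a_1,\dots,a_d$ interspersed with blocks of zeros $0^{n_0},a_1,0^{n_1},\dots$ carrying the generating variables $t_0,\dots,t_d$. The key bookkeeping observation is that a subsequence index chosen inside a block of zeros contributes a degenerate or near-degenerate factor, so the subsequence data is most naturally recorded not by a single index sequence but by a pair of interleaved sequences $i_p\leq j_p$: the index $i_p$ records where a ``genuine'' letter $a_{i_p}$ enters the left tensor factor, while $j_p$ records the position of the last zero (equivalently the generating variable $t_{j_p}$) absorbed before the next genuine letter. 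This is exactly the range $0=i_0\leq j_0<i_1\leq j_1<\cdots<i_k\leq j_k<i_{k+1}=d+1$ appearing on the right.

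Next I would apply $\Gamma$ to each factor appearing in the coproduct. The map $\Gamma$ splits an iterated integral $I(a_0;\cdots;a_{d+1})$ at the basepoint $0$ into a product of a piece ending at $0$ and a piece starting at $0$; after summing the generating series this is precisely what converts each right-tensor factor $I(a_{i_p};\cdots;a_{i_{p+1}})$ of~\eqref{eq:IteratedIntegralCoproduct} into the product of the two factors
\begin{equation*}
I(a_{i_p};a_{i_p+1},\cdots,a_{j_p};0|\cdots)\,I(0;a_{j_p+1},\cdots,a_{i_{p+1}-1};a_{i_{p+1}}|\cdots)
\end{equation*}
displayed on the right-hand side, with the splitting point recorded by $j_p$. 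Simultaneously, $\Gamma$ acts on the left tensor factor, and modulo degenerates the sum over splitting positions collapses so that the left factor becomes the single generating series $I(a_0;a_{i_1},\cdots,a_{i_k};a_{d+1}|t_{j_0},\cdots,t_{j_k})$ with the shifted variables $t_{j_\alpha}$. The shift from $t_{i_\alpha}$ to $t_{j_\alpha}$ in the left factor is the crux: it comes from the zeros absorbed between position $i_\alpha$ and position $j_\alpha$ being transferred, via $\Gamma$, from the right factors into the generating variable of the left factor.

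I would carry out the identification by matching terms: fix a term on the left indexed by a subsequence together with a choice of splitting points for each $\Gamma$, then read off the corresponding $(i_p,j_p)$ data and check that the $t$-variables line up. The main obstacle I anticipate is the careful handling of the phrase ``modulo degenerates'': the raw composition $\Gamma\circ\Delta$ produces extra terms in which a genuine letter is swallowed into a degenerate integral $I(0;\cdots;0)$, and one must verify that every such term is itself degenerate (hence discarded) and that no genuine contribution is lost in the process. This requires tracking which zero-blocks belong to which tensor factor after $\Gamma$ splits, and confirming that the boundary conventions ($t_0=0$, $I(a_0;\emptyset;a_1)=1$, and the convention that an empty block yields $1$) make the two sides agree index-for-index. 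Once the combinatorial dictionary between the splitting data of $\Gamma$ and the pairs $i_p\leq j_p$ is established, the rest is a direct, if notation-heavy, reindexing, and I would appeal to Goncharov's~\cite[Thm.~5.1]{Goncharov_GaloisSymmetriesOfFundamentalGroupoidsAndNoncommutativeGeometry} for the structural content while supplying the explicit reorganization that produces the stated formula.
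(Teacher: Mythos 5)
Your overall strategy is the same as the paper's: expand the generating series into its coefficients $I(a_0;0^{n_0},a_1,0^{n_1},\dots,a_d,0^{n_d};a_{d+1})$, apply Goncharov's coproduct~\eqref{eq:IteratedIntegralCoproduct} to the expanded word, observe that modulo degenerates the chosen zeros lying between two consecutive chosen letters $a_{i_p}$ and $a_{i_{p+1}}$ must form a contiguous run inside a single block $n_{j_p}$ (any other choice forces a right-hand factor $I(0;\cdots;0)$ with nonempty middle), apply $\Gamma$ to split the remaining unsplit gap integrals, and match the outcome with the coefficient of $t_0^{n_0}\cdots t_d^{n_d}$ on the right of~\eqref{eq:GonThm5.1}. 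This is precisely the computation the paper records, and your dictionary between the pairs $i_p\leq j_p$ and the data ``chosen letters plus splitting/absorption positions'' is the correct one.

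There is, however, one step that fails as stated: your claim that ``$\Gamma$ acts on the left tensor factor, and modulo degenerates the sum over splitting positions collapses'' to the single unsplit series $I(a_0;a_{i_1},\cdots,a_{i_k};a_{d+1}|t_{j_0},\cdots,t_{j_k})$. For general endpoints this is false. Applying $\Gamma$ to the left factor produces terms $I(a_0;a_{i_1},\dots,a_{i_p};0|\cdots)\,I(0;a_{i_{p+1}},\dots,a_{i_k};a_{d+1}|\cdots)$, and when $a_0\neq 0\neq a_{d+1}$ none of these is degenerate (degeneracy requires both endpoints of a single factor to be $0$), so nothing collapses and your version of the identity would carry extra terms that are not present in~\eqref{eq:GonThm5.1}. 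The correct reading — and what the paper's computation implicitly does, since its intermediate formula keeps the left factor $I(a_{i_0};0^{m_0},a_{i_1},\cdots,a_{i_k},0^{m_k};a_{i_{k+1}})$ unsplit — is that $\Gamma\circ\Delta$ denotes $\Delta$ followed by the multiplicative application of $\Gamma$ to the \emph{right-hand} tensor factor only. Your collapse argument is valid exactly in the special case $a_0=0$ (then $I(0;W_1;0)$ is degenerate unless $W_1=\emptyset$), which is the case invoked later in Proposition~\ref{prop:PhiPreservesCoproduct}, but the lemma is stated for arbitrary endpoints, so you cannot use it to dispose of the left factor. With this one correction your reindexing argument goes through and coincides with the paper's proof; note also that the paper proves the identity directly by exhibiting the intermediate coefficient formula, so no appeal to Goncharov's Theorem~5.1 for ``structural content'' is actually needed.
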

\begin{proof}
A straightforward computation shows that $\Gamma\circ\Delta(I(a_0;0^{n_0},a_1,0^{n_1},\cdots,a_d,0^{n_d};a_{d+1}))$ after eliminating degenerates equals
\begin{multline}
\sum_{0=i_0\leq j_0<\cdots<i_k\leq j_k<i_{k+1}=d+1}I(a_{i_0};0^{m_0},a_{i_1},\cdots,a_{i_k},0^{m_k};a_{i_{k+1}})\otimes\prod_{0\leq p\leq k}\\
\left(\sum_{\substack{r_{j_p}+m_{j_p}+s_{j_p}=n_{j_p}\\r_{j_p},s_{j_p},m_{j_p}\geq0}}I(a_{i_p};0^{n_{i_p}},\cdots,0^{n_{j_p-1}},a_{j_p},0^{r_{j_p}};0)I(0;0^{s_{j_p}},a_{j_p+1},0^{n_{j_p+1}},\cdots,0^{n_{i_{p+1}-1}};a_{i_{p+1}})\right).
\end{multline}
This equals the $t^{n_0}\cdots t^{n_d}$ coefficient of the righthand side of~\eqref{eq:GonThm5.1}. This proves the result.
\end{proof}

\subsection{The Hopf algebra of polylogarithmic iterated integrals}\label{sec:IAndPhi}
Suppose $S$ consists of $0$, $1$, and all elements of the form $\prod_{r=i}^j x_r^{-1}$ with $1\leq i\leq j$. 
Each generator of $\mathcal I(S)$ has the form
\begin{equation}\label{eq:Igenerator} I(a_0;0^{n_0},a_1,0^{n_1},\dots,a_d,0^{n_d};a_{d+1}),\qquad a_1,\dots,a_d\neq 0.
\end{equation}
Goncharov~\cite{Goncharov_MultiplePolylogarithmsAndMixedTateMotives} showed that a realization~\eqref{eq:IteratedIntegral} of an element in $\mathcal I(S)$ can always be expressed in terms of multiple polylogarithms. However, the polylogarithms involved may not be in $\mathbb H^{\symb}$ (e.g.~they may involve $\Li_{\mathbf n}(x_1,x_3)$). We shall thus restrict to a subalgebra. %We wish to show that Goncharov's realization is purely symbolic. 

\begin{definition}\label{def:PolylogIntegral} An iterated integral~\eqref{eq:Igenerator} is \emph{polylogarithmic} if
for some $i_0<\dots<i_{d+1}$ the equality
\begin{equation}\label{eq:PolylogGeneratorCondition}
    \frac{a_{k+1}}{a_k}=x_{i_k\to i_{k+1}}
\end{equation}
holds for $k=1,\dots,d$, and also for $k=0$ if $a_0\neq 0$. If $a_{d+1}=0$, \eqref{eq:Igenerator} is polylogarithmic if the reverse $I(a_{d+1};0^{n_d},a_d,\dots,a_1,0^{n_0};a_0)$ is polylogarithmic. A degenerate iterated integral~\eqref{eq:Igenerator} is polylogarithmic if replacing $a_0$ or $a_{d+1}$ by 1, would make it polylogarithmic. The polylogarithmic iterated integrals generate a Hopf subalgebra $\mathbb I^{\symb}$ of $\mathcal I(S)$.
\end{definition}

Let $\mathbb I^{\symb}(d)$ denote the Hopf subalgebra generated by the elements that only involve $x_i$ with $i\leq d$. By assigning complex numbers to the $x_i$ the realization \eqref{eq:IteratedIntegral} provides a canonical \emph{realization map}
\begin{equation}\label{eq:ItRealization}
    r\colon\mathbb I^{\symb}(d)\to \Mult(S_d(\C))
\end{equation}
where for a complex manifold $X$, $\Mult(X)$ denotes the set of multivalued holomorphic functions on $X$.
One similarly has a realization map
\begin{equation}\label{eq:LogRelalization}
    r\colon\mathbb \overline{\mathbb H}^{\symb}(d)\to \Mult(S_d(\C)).
\end{equation}

Following Goncharov~\cite{Goncharov_MultiplePolylogarithmsAndMixedTateMotives} we now construct a morphism of Hopf algebras\begin{equation}
\Phi\colon \mathbb I^{\symb}\to\overline{\mathbb H}^{\symb},
\end{equation}
which preserves the realization maps~\eqref{eq:ItRealization} and~\eqref{eq:LogRelalization}.

Writing $I_{\mathbf i|\mathbf t}$ as shorthand for $I(a_0;a_1,\dots,a_d;a_{d+1}|t_0,\dots,t_d)$ we first define
\begin{equation}\label{eq:Phi00}
\Phi(I_{\mathbf i|\mathbf t})=0\text{ if }a_0=a_{d+1}=0,\quad d>0
\end{equation}
\begin{equation}\label{eq:Phi0x}
\Phi(I_{\mathbf i|\mathbf t})=(-1)^d\exp([a_{d+1}]_0t_0)\left[\frac{a_2}{a_1},\dots,\frac{a_{d+1}}{a_d} \middle\vert t_1-t_0,\dots,t_d-t_0\right] \text{ if }a_0=0, a_{d+1}\neq 0,
\end{equation}
\begin{equation}\label{eq:Phix0}
\Phi(I_{\mathbf i|\mathbf t})=(-1)^d\Phi\big(I(0;a_d,\dots,a_1;a_0|-t_d,\dots,-t_1)\big) \text{ if }a_{d+1}=0.
\end{equation}
%In~\eqref{eq:Phi0x} $\exp(-[y^{-1}]_0t_0)$ is 1 if $y=1$.
The general formula for $\Phi$ is then given by
\begin{equation}\label{eq:PhiGeneral}
\Phi(I_{\mathbf i|\mathbf t})=\sum_{0\leq p\leq d}\Phi\big(I(a_0;a_1,\dots,a_p;0|t_0,\dots,t_p)\big)\Phi\big(I(0;a_{p+1},\dots,a_d;a_{d+1}|t_p,\dots,t_d)\big)
\end{equation}
with the righthand side defined by the formulas~\eqref{eq:Phi00}, \eqref{eq:Phi0x} and~\eqref{eq:Phix0}.

\begin{example}\label{ex:PhiOfPolylog} One has
\begin{equation}
\begin{aligned}
\Phi\big(I(0;0^n;a_1)\big)&=\frac{1}{n!}[a_1]_0^n\\ \Phi\big(I(0;\frac{1}{x_1\cdots x_d},0^{n_1-1},\frac{1}{x_2\cdots x_d},\dots,\frac{1}{x_d},0^{n_d-1};1)\big)&=(-1)^d[x_1,\dots,x_d]_{n_1,\dots,n_d}\\
\Phi\big(I(0;0^{n_0-1},a_1,0^{n_1-1},\dots,a_d,0^{n_d-1};a_{d+1})\big)&=\\
(-1)^{n_0+d-1}\sum_{i_0+\dots+i_d=n_0-1}(-1)^{i_0}\frac{[a_{d+1}]_0^{i_0}}{i_0!}\binom{n_1+i_1-1}{n_1-1}&\dots\binom{n_d+i_d-1}{n_d-1}\left[\frac{a_2}{a_1},\dots,\frac{a_{d+1}}{a_d}\right]_{n_1+i_1,\dots,n_d+i_d}.
\end{aligned}
\end{equation}
\end{example}

%By~\eqref{eq:Phi0x} $\Phi$ takes

\begin{proposition}[{\cite[Prop.~2.15]{Goncharov_MultiplePolylogarithmsAndMixedTateMotives}}]
The map $\Phi$ respects realization, i.e.~we have a commutative diagram
\begin{equation}
    \cxymatrix{{\mathbb I^{\symb}(d)\ar[rr]^-{\Phi}\ar[rd]^-{r}&&\overline{\mathbb H}^{\symb}(d)\ar[ld]_-{r}\\&\Mult(S_d(\C))&}}
\end{equation}
\end{proposition}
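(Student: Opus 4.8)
The plan is to exploit that both legs of the triangle are algebra homomorphisms into $\Mult(S_d(\C))$, so that it suffices to verify $r\circ\Phi=r$ on the algebra generators $I(a_0;a_1,\dots,a_d;a_{d+1})$ of $\mathbb I^{\symb}(d)$. Indeed, the realization $r$ in~\eqref{eq:ItRealization} is multiplicative by construction, since it evaluates the free generators of $\mathcal I(S)$ and extends as an algebra map; $\Phi$ is an algebra map because it is a morphism of Hopf algebras; and $r$ in~\eqref{eq:LogRelalization} is multiplicative because products of polylogarithm symbols realize to products of functions. Hence $r\circ\Phi$ and $r$ are both algebra homomorphisms, and agreement on generators yields the commutative triangle. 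The strategy is then to observe that the clauses~\eqref{eq:Phi00}, \eqref{eq:Phi0x}, \eqref{eq:Phix0} together with~\eqref{eq:PhiGeneral} each mirror a genuine structural identity satisfied by the realized iterated integral: path composition at the point $0$, path reversal, the vanishing of degenerate periods, and the classical integral representation of multiple polylogarithms.

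First I would reduce to half-paths. Since $\Phi$ is multiplicative, the right-hand side of~\eqref{eq:PhiGeneral} equals $\Phi(\Gamma(I_{\mathbf i|\mathbf t}))$, where $\Gamma$ is the path-composition operator of Lemma~\ref{lemma:GonThm5.1}; thus~\eqref{eq:PhiGeneral} asserts $\Phi=\Phi\circ\Gamma$ on generators. On the realized side the composition-of-paths formula gives $r=r\circ\Gamma$, that is $r(I(a_0;a_1,\dots,a_d;a_{d+1}))=\sum_{p=0}^d r(I(a_0;a_1,\dots,a_p;0))\,r(I(0;a_{p+1},\dots,a_d;a_{d+1}))$. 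Combined with multiplicativity of $r$, the general identity $r\circ\Phi=r$ then follows once it is known on the half-path generators, those with $a_0=0$ or $a_{d+1}=0$. The case $a_{d+1}=0$ reduces to the case $a_0=0$ through~\eqref{eq:Phix0}: reversing the integration path negates each one-form and reverses their order, producing exactly the sign $(-1)^d$ and the variable reversal $t_i\mapsto -t_{d+1-i}$ recorded there. The doubly degenerate case $a_0=a_{d+1}=0$ is matched by~\eqref{eq:Phi00}, since under the canonical renormalization built into the realization a positive-weight iterated integral with both tangential base points at $0$ is normalized to zero, agreeing with $\Phi=0$.

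This leaves the base clause~\eqref{eq:Phi0x} for $a_0=0$, $a_{d+1}\neq 0$, which is the analytic heart of the argument and where I would do the real work. Here I would show that the regularized integral $I(0;0^{n_0},a_1,0^{n_1-1},\dots,a_d,0^{n_d-1};a_{d+1})$ realizes to the explicit polylogarithmic expression of Example~\ref{ex:PhiOfPolylog}. I would split this into two parts. For the convergent piece, with no leading zeros, a direct iterated expansion of~\eqref{eq:IteratedIntegral} in the regime $|a_1|>\dots>|a_d|$ reproduces the defining nested sum~\eqref{eq:PowerSeriesLi} of $\Li_{n_1,\dots,n_d}$ evaluated at the ratios $a_{k+1}/a_k$, up to the overall sign $(-1)^d$. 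The leading block $0^{n_0}$, which makes the integral divergent at the lower endpoint, is then treated by shuffle regularization, producing powers of $\log(a_{d+1})=[a_{d+1}]_0$ organized precisely by the exponential prefactor $\exp([a_{d+1}]_0 t_0)$ and the binomial coefficients appearing in Example~\ref{ex:PhiOfPolylog}.

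The main obstacle is this last regularization bookkeeping: verifying that the divergent contribution of the leading zeros assembles exactly into the generating-series factor $\exp([a_{d+1}]_0 t_0)$ with the correct binomials, rather than the convergent power-series identity, which is classical. Once~\eqref{eq:Phi0x} is established on half-paths, the reductions above assemble it through path reversal and path composition into the general formula~\eqref{eq:PhiGeneral}, giving $r\circ\Phi=r$ on all generators and hence commutativity of the triangle. Throughout, the agreement with Goncharov's Proposition~2.15 serves as a consistency check that the sign and renormalization conventions adopted here are the correct ones.
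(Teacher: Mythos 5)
The paper offers no proof of this proposition: it is quoted verbatim from Goncharov~\cite[Prop.~2.15]{Goncharov_MultiplePolylogarithmsAndMixedTateMotives}, and the clauses~\eqref{eq:Phi00}, \eqref{eq:Phi0x}, \eqref{eq:Phix0}, \eqref{eq:PhiGeneral} defining $\Phi$ are lifted from there precisely because they are modelled on identities satisfied by actual iterated integrals. So there is no in-paper argument to compare against; what you have written is in effect a reconstruction of Goncharov's original proof, and its skeleton is correct: $\mathbb I^{\symb}(d)$ is generated as an algebra by the $I(a_0;a_1,\dots,a_d;a_{d+1})$, both legs of the triangle are algebra maps, so generators suffice; \eqref{eq:PhiGeneral} says $\Phi=\Phi\circ\Gamma$ while Chen's composition-of-paths formula gives $r=r\circ\Gamma$; path reversal matches~\eqref{eq:Phix0}; vanishing of regularized loop integrals at the tangential base point $0$ matches~\eqref{eq:Phi00}; and the entire weight of the statement then rests on the half-path clause~\eqref{eq:Phi0x}. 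One small correction: $\Phi$ is multiplicative \emph{by construction} (it is defined on the free generators and extended as an algebra map), not ``because it is a morphism of Hopf algebras''---the Hopf property is the nontrivial content of Proposition~\ref{prop:PhiPreservesCoproduct} and is not needed, nor available without circularity of exposition, at this point.

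Two steps in your outline carry essentially all the difficulty, and both are deferred rather than done. First, $r=r\circ\Gamma$, the reversal clause, and the degenerate clause are statements about \emph{regularized} integrals: the forms $dt/t$ are singular at the intermediate base point $0$, so Chen's composition and reversal formulas must be shown to be compatible with the canonical renormalization of~\eqref{eq:ItRealization}; this is standard (tangential base points) but it is exactly where sign and normalization conventions can fail, and your sketch treats it as automatic. Second, as you yourself flag, clause~\eqref{eq:Phi0x} in generating-series form packages two facts at once: the convergent identity $r\big(I(0;a_1,0^{n_1-1},\dots,a_d,0^{n_d-1};a_{d+1})\big)=(-1)^d\Li_{n_1,\dots,n_d}(a_2/a_1,\dots,a_{d+1}/a_d)$ obtained by geometric-series expansion of~\eqref{eq:IteratedIntegral} against~\eqref{eq:PowerSeriesLi}, and the shuffle-regularization of the leading block $0^{n_0}$ into the factor $\exp([a_{d+1}]_0t_0)$ with the binomial coefficients of Example~\ref{ex:PhiOfPolylog}. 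You name the right mechanism for both but carry out neither, so what you have is a correct strategy with the central computation outstanding---which is precisely the computation that the cited Prop.~2.15 of Goncharov supplies.
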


\begin{proposition}\label{prop:PhiPreservesCoproduct}
$\Phi$ is a Hopf algebra morphism, i.e.~it preserves the coproduct.
\end{proposition}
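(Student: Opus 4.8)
The plan is to exploit the fact that the coproduct $\Delta_{\overline{\mathbb H}}$ of Definition~\ref{def:GoncharovCoproduct} is modeled precisely on Goncharov's iterated-integral coproduct, with Lemma~\ref{lemma:GonThm5.1} serving as the combinatorial bridge. First I would reduce to generators: since $\mathcal I(S)$ is free as a commutative algebra on the symbols $I(a_0;a_1,\dots,a_d;a_{d+1})$, its subalgebra $\mathbb I^{\symb}$ is free on the polylogarithmic generators, so $\Phi$ extends uniquely to an algebra homomorphism, and both $\Delta_{\overline{\mathbb H}}\circ\Phi$ and $(\Phi\otimes\Phi)\circ\Delta$ are algebra homomorphisms out of $\mathbb I^{\symb}$. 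It therefore suffices to verify $\Delta_{\overline{\mathbb H}}\Phi(I_{\mathbf i|\mathbf t})=(\Phi\otimes\Phi)\Delta(I_{\mathbf i|\mathbf t})$ on the generating series $I_{\mathbf i|\mathbf t}$, and since $\Phi$ annihilates degenerate integrals by~\eqref{eq:Phi00}, I would carry out the comparison modulo degenerates, which is exactly the setting of Lemma~\ref{lemma:GonThm5.1}.

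The key observation is that $\Phi$ factors through $\Gamma$: by~\eqref{eq:PhiGeneral} together with multiplicativity, $\Phi(x)$ is obtained from $\Gamma(x)$ by applying the base-case formulas~\eqref{eq:Phi0x} and~\eqref{eq:Phix0} to each $\Gamma$-split factor. Consequently I would rewrite $(\Phi\otimes\Phi)\Delta(I_{\mathbf i|\mathbf t})$ by first passing to the $\Gamma$-split coproduct of Lemma~\ref{lemma:GonThm5.1}. The left tensor factor there is a single generating series $I(a_0;a_{i_1},\dots,a_{i_k};a_{d+1}\mid\cdots)$ with $a_0=0$ and $a_{d+1}\neq0$, so~\eqref{eq:Phi0x} sends it directly to a contracted polylogarithm symbol $[y_{i_1\to i_2},\dots,y_{i_k\to i_{k+1}}\mid\cdots]$; the right tensor factor, after $\Gamma$-splitting each subsequence integral at a point $j_p$, is a product of $0$-ending and $0$-starting integrals. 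Applying~\eqref{eq:Phix0} to the $0$-ending pieces produces, via the order reversal, exactly the inverted symbols $[y^{-1}_{\dots}\mid\cdots]$, while~\eqref{eq:Phi0x} sends the $0$-starting pieces to $\exp([\,\cdot\,]_0 t)$ times regular symbols. This is term-for-term the structure of the right-hand side of~\eqref{eq:GoncharovCoproductFormula}.

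On the other side, $\Delta_{\overline{\mathbb H}}\Phi(I_{\mathbf i|\mathbf t})$ is, by the definition of $\Delta_{\overline{\mathbb H}}$, nothing but the formula~\eqref{eq:GoncharovCoproductFormula} applied to the symbol $\Phi(I_{\mathbf i|\mathbf t})$. The proof then comes down to matching the two expressions under the index dictionary in which the summation data $0=i_0\le j_0<i_1\le\cdots<i_k\le j_k<i_{k+1}=d+1$ of Lemma~\ref{lemma:GonThm5.1} correspond to the indices $i_\alpha,j_\alpha$ of~\eqref{eq:GoncharovCoproductFormula}, with the split points $j_p$ governing both the shifted arguments $t_{j_\alpha}-t_{j_\alpha-1},\dots$ and the exponential prefactors.

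I expect the main obstacle to be the bookkeeping in this last matching step: one must track the signs $(-1)^{j_\alpha-i_\alpha}$, the exponential weights $[y_{i_\alpha\to i_{\alpha+1}}]_0$, and the precise pattern of shifted $t$-variables produced by~\eqref{eq:Phi0x} and by the reversal in~\eqref{eq:Phix0}, and confirm that the reversed $0$-ending integrals map to inverted symbols with arguments and depth exactly as in~\eqref{eq:GoncharovCoproductFormula}. A secondary point requiring care is verifying that every sub-integral arising from the splitting satisfies the polylogarithmic condition~\eqref{eq:PolylogGeneratorCondition}, so that all factors genuinely lie in $\overline{\mathbb H}^{\symb}$ and the identity takes place in $\overline{\mathbb H}^{\symb}\otimes\overline{\mathbb H}^{\symb}$. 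Note that because we work in $\overline{\mathbb H}^{\symb}$ rather than $\mathbb H^{\symb}$, no application of $\INV$ is needed here; that subtlety is deferred to Theorem~\ref{thm:HHopf}.
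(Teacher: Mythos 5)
Your proposal follows essentially the same route as the paper's proof: reduce to the generating series, use the factorization $\Phi\circ\Gamma=\Phi$ together with Lemma~\ref{lemma:GonThm5.1} (valid modulo degenerates, which $\Phi$ kills by~\eqref{eq:Phi00}) to expand $(\Phi\otimes\Phi)\circ\Delta$, and compare with $\Delta_{\overline{\mathbb H}}\circ\Phi$ computed directly from~\eqref{eq:GoncharovCoproductFormula} using $\Delta(\exp([x]_0t))=\exp([x]_0t)\otimes\exp([x]_0t)$. The paper's proof consists precisely of the bookkeeping you defer: in the case $a_0=0$ (with $a_{d+1}=0$ left to the reader) it writes out both sides as explicit sums, its equations~\eqref{eq:PhiDelta} and~\eqref{eq:DeltaPhi}, and matches them term by term, confirming your index dictionary and sign/exponential accounting.
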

\begin{proof} It follows from~\eqref{eq:PhiGeneral} that $\Phi\circ\Gamma=\Phi$, so we must prove that $\Phi\circ\Gamma\circ\Delta=\Delta\circ\Phi$.
It is enough to verify this in the cases when $a_0=0$ and $a_{d+1}=0$, respectively. We prove the case $a_0=0$ and leave the other to the reader. Using the formula for $\Gamma\circ\Delta$ in Lemma~\ref{lemma:GonThm5.1} we see that $\Phi\circ\Gamma\circ\Delta(I_{\mathbf i|\mathbf t})$ equals
\begin{equation}\label{eq:PhiDelta}
\begin{aligned}
&\sum_{1\leq i_1\leq j_1<\cdots<i_k\leq j_k<i_{k+1}=d+1} \Phi(I(0;a_{i_1},\cdots,a_{i_k};a_{d+1}|t_{0},t_{j_1},\cdots,t_{j_k}))\otimes\Phi(I(0;a_{1},\cdots;a_{i_{1}}|t_{0},\cdots,t_{i_{1}-1}))\\
&\hspace*{3pc}\prod_{p=1}^k\Phi(I(a_{i_p};a_{i_p+1},\cdots,a_{j_p};0|t_{i_p},\cdots,t_{j_p}))\Phi(I(0;a_{j_p+1},\cdots;a_{i_{p+1}}|t_{j_p},\cdots,t_{i_{p+1}-1}))\\
&=\sum_{1\leq i_1\leq j_1<\cdots<i_k\leq j_k<i_{k+1}=d+1}(-1)^k\exp([a_{d+1}]_0t_0)\left[\frac{a_{i_2}}{a_{i_1}},\cdots,\frac{a_{i_{k+1}}}{a_{i_k}}|t_{j_1}-t_0,\cdots,t_{j_k}-t_0\right]\otimes\\
&\hspace*{3pc}(-1)^{i_1-1}\exp([a_{i_1}]_0t_0)\left[\frac{a_2}{a_1},\cdots,\frac{a_{i_1}}{a_{i_1-1}}|t_1-t_0,\cdots,t_{i_{1}-1}-t_0\right]\prod_{p=1}^k\\
&\hspace*{6pc}\exp(-[a_{i_p}]_0{t_{j_p}})\left[\frac{a_{j_p-1}}{a_{j_p}},\dots,\frac{a_{i_p}}{a_{i_p+1}}|t_{j_p}-t_{j_p-1},\cdots,t_{j_p}-t_{i_p}\right]\\
&\hspace*{6pc}(-1)^{i_{p+1}-j_p-1}\exp([a_{i_p+1}]_0t_{j_p})\left[\frac{a_{j_p+2}}{a_{j_p+1}},\dots,\frac{a_{i_p+1}}{a_{i_p}}|t_{j_p+1}-t_{j_p},\cdots,t_{i_{p+1}-1}-t_{j_p}\right].
\end{aligned}
\end{equation}
Using that $\Delta(\exp([x]_0t))=\exp([x]_0t)\otimes\exp([x]_0t)$ it follows that $\Delta\circ\Phi(I_{\mathbf i|\mathbf t})$ equals
\begin{equation}\label{eq:DeltaPhi}
\begin{aligned}
&(-1)^d\Delta\left(\exp([a_{d+1}]_0t_0)\left[\frac{a_2}{a_1},\cdots,\frac{a_{d+1}}{a_d}|t_1-t_0,\cdots,t_d-t_0\right]\right)\\
&=\sum_{1\leq i_1\leq j_1<\cdots<i_k\leq j_k<i_{k+1}=d+1}(-1)^d\exp([a_{d+1}]_0t_0)\left[\frac{a_{i_2}}{a_{i_1}},\cdots,\frac{a_{i_{k+1}}}{a_{i_k}}|t_{j_1}-t_0,\cdots,t_{j_k}-t_0\right]\otimes\\
&\hspace*{3pc}\exp([a_{d+1}]_0t_0)\left[\frac{a_2}{a_1},\cdots,\frac{a_{i_1}}{a_{i_1-1}}|t_1-t_0,\cdots,t_{i_1-1}-t_0\right]\prod_{p=1}^k\\
&\hspace*{6pc}(-1)^{j_p-i_p}\exp\left(\left[\frac{a_{i_{p+1}}}{a_{i_p}}\right]_0(t_{j_p}-t_0)\right)\left[\frac{a_{j_p-1}}{a_{j_p}},\dots,\frac{a_{i_p}}{a_{i_p+1}}|t_{j_p}-t_{j_p-1},\cdots,t_{j_p}-t_{i_p}\right]\\
&\hspace*{6pc}\left[\frac{a_{j_p+2}}{a_{j_p+1}},\dots,\frac{a_{i_p+1}}{a_{i_p}}|t_{j_p+1}-t_{j_p},\cdots,t_{i_{p+1}-1}-t_{j_p}\right].
\end{aligned}
\end{equation}
Comparing \eqref{eq:DeltaPhi} and \eqref{eq:PhiDelta} we conclude that $\Phi\circ\Gamma\circ\Delta(I_{\mathbf i|\mathbf t})=\Delta\circ\Phi(I_{\mathbf i|\mathbf t})$ as desired.
%Using the elementary equalities
% \begin{equation}
% \begin{gathered}
% (-1)^k(-1)^{i_1-1}\prod_{p=1}^k(-1)^{i_{p+1}-j_p-1}=(-1)^d\prod_{p=1}^k(-1)^{j_p-i_p}\\
%   \exp([a_{i_1}]_0t_0)\prod_{p=1}^k\exp(-[a_{i_p}]_0t_{j_p})\exp([a_{i_{p+1}}]_0t_{j_p})=\exp([a_{p+1}]_0t_0)\prod_{p=1}^k\exp\left(\left[\frac{a_{i_{p+1}}}{a_{i_p}}\right]_0(t_{j_p}-t_0)\right)
% \end{gathered}
% \end{equation}
\end{proof}

% \begin{remark}\label{rm:SignedPhi} It will sometimes be convenient to instead consider the map $\Phi'$, taking $I_{\mathbf i|\mathbf t}$ to $(-1)^d\Phi(I_{\mathbf i|\mathbf t})$. This is still a Hopf algebra morphism, but it no longer commutes with realization.
% \end{remark}

\begin{corollary}
The coproduct on $\overline{\mathbb H}^{\symb}$ is coassociative. 
\end{corollary}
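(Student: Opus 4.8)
The plan is to transport coassociativity from Goncharov's Hopf algebra $\mathcal I(S)$ to $\overline{\mathbb H}^{\symb}$ along the morphism $\Phi$. By Corollary~\ref{cor:Coassociative} the coproduct on $\mathcal I(S)$ is coassociative, and since $\mathbb I^{\symb}$ is a Hopf subalgebra (Definition~\ref{def:PolylogIntegral}), the restricted coproduct on $\mathbb I^{\symb}$ is coassociative too. By Proposition~\ref{prop:PhiPreservesCoproduct}, $\Phi\colon\mathbb I^{\symb}\to\overline{\mathbb H}^{\symb}$ is a morphism of Hopf algebras, in particular of coalgebras, so $\Delta\circ\Phi=(\Phi\otimes\Phi)\circ\Delta$. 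The one remaining ingredient is surjectivity of $\Phi$, after which coassociativity of the target is automatic.

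First I would verify that $\Phi$ is onto. Since $\Phi$ is an algebra morphism, its image is a subalgebra, so it suffices to realize each generator of $\overline{\mathbb H}^{\symb}$. The log generators are covered by $\Phi\big(I(0;0;x_i^{-1})\big)=[x_i^{-1}]_0=-[x_i]_0$ (the $n=1$ case of the first line of Example~\ref{ex:PhiOfPolylog}). The regular generators are covered by the second line of Example~\ref{ex:PhiOfPolylog}: every $[x_1,\dots,x_d]_{\mathbf n}$, and after choosing the $a_k\in S$ so that the ratios $a_{k+1}/a_k$ realize an arbitrary contraction, every regular symbol, is $(-1)^d\Phi$ of a polylogarithmic integral with $a_0=0$, $a_{d+1}\neq 0$. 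The inverted generators require the reversal formula~\eqref{eq:Phix0}: for an integral with $a_{d+1}=0$ the reversed integral has $a_0=0$, and applying~\eqref{eq:Phi0x} to it produces symbols whose entries are the inverted ratios $a_{k-1}/a_k=x^{-1}$ in reversed order, i.e.~exactly the inverted generators. Hence every generator lies in the image and $\Phi$ is surjective.

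With surjectivity established, the conclusion is a short diagram chase. Given $x\in\overline{\mathbb H}^{\symb}$, choose $y\in\mathbb I^{\symb}$ with $\Phi(y)=x$. Applying $\Delta\circ\Phi=(\Phi\otimes\Phi)\circ\Delta$ twice gives
\begin{equation}
(\Delta\otimes\id)\Delta\,\Phi(y)=(\Phi\otimes\Phi\otimes\Phi)(\Delta\otimes\id)\Delta(y),\qquad (\id\otimes\Delta)\Delta\,\Phi(y)=(\Phi\otimes\Phi\otimes\Phi)(\id\otimes\Delta)\Delta(y).
\end{equation}
Coassociativity in $\mathbb I^{\symb}$ equates the two inputs $(\Delta\otimes\id)\Delta(y)=(\id\otimes\Delta)\Delta(y)$, so applying $\Phi\otimes\Phi\otimes\Phi$ yields $(\Delta\otimes\id)\Delta(x)=(\id\otimes\Delta)\Delta(x)$. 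Since $x$ was arbitrary, $\Delta$ is coassociative on $\overline{\mathbb H}^{\symb}$.

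I expect the surjectivity step to carry all the content, while the transfer is purely formal. The one point requiring care is confirming that the inverted generators are hit: unlike the regular symbols, these are not produced by the direct formula~\eqref{eq:Phi0x} but only through the reversal convention~\eqref{eq:Phix0}, so one must track how reversal inverts and reorders the consecutive ratios.
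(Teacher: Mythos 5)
Your overall strategy --- transporting coassociativity from $\mathbb I^{\symb}$ along the coproduct-preserving map $\Phi$, with surjectivity of $\Phi$ as the one substantive input --- is exactly the argument the paper leaves implicit after Proposition~\ref{prop:PhiPreservesCoproduct}, and your treatment of the log generators, the regular generators, and the formal diagram chase at the end is correct. The gap is in the step you yourself flagged: the inverted generators. Your mechanism for hitting them fails because of how Definition~\ref{def:PolylogIntegral} treats integrals ending at $0$. By that definition, $I(a_0;a_1,\dots,a_d;0)$ lies in $\mathbb I^{\symb}$ precisely when its \emph{reverse} $I(0;a_d,\dots,a_1;a_0)$ satisfies condition~\eqref{eq:PolylogGeneratorCondition}, i.e.\ when the \emph{backward} ratios $a_{k-1}/a_k$ are the regular elements $x_{i\to j}$. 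Applying \eqref{eq:Phix0} followed by \eqref{eq:Phi0x} to such an integral therefore produces a generating series whose entries are exactly those regular ratios: polylogarithmic integrals with $a_{d+1}=0$ map to \emph{regular} symbols (times exponentials of logs), not inverted ones. The integrals that would map to inverted symbols under your reasoning --- those with regular forward ratios $a_{k+1}/a_k=x_{i_k\to i_{k+1}}$ and right endpoint $0$ --- have reverses whose ratios are inverses, hence are \emph{not} polylogarithmic and lie outside the domain of $\Phi$. So, as written, no generator of $\mathbb I^{\symb}$ hits an inverted symbol, and your surjectivity proof does not go through.

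Surjectivity is nevertheless true, but it must be extracted from the general formula \eqref{eq:PhiGeneral}. Take a polylogarithmic integral with \emph{both} endpoints nonzero, so $a_{k+1}/a_k=x_{i_k\to i_{k+1}}$ for $k=0,\dots,d$. In \eqref{eq:PhiGeneral} the left-hand factors $\Phi\big(I(a_0;a_1,\dots,a_p;0|\cdots)\big)$ are computed by \eqref{eq:Phix0} and \eqref{eq:Phi0x} and have entries $a_{p-1}/a_p,\dots,a_0/a_1$, which are now genuinely the inverted ratios in reversed order; in particular the $p=d$ summand is, up to exponential factors of logs, the inverted generating series of depth $d$. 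The remaining summands ($p<d$) are products of inverted series of depth $<d$, regular series, and exponentials of logs. Since $\mathrm{Im}\,\Phi$ is a subalgebra containing the logs and the regular symbols, an induction on depth lets you subtract the $p<d$ terms and strip off the exponential factors, showing that the depth-$d$ inverted generators lie in $\mathrm{Im}\,\Phi$. With that replacement for your third bullet point, the proof is complete and coincides with the paper's intended argument.
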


Coassociativity of the coproduct on $\mathbb H^{\symb}$ then follows from the result below. Its proof is technical, so we relegate it to Section~\ref{sec:DeltaCommutesWithINV}.
\begin{proposition}\label{prop:INV}
The map $\INV\colon\overline{\mathbb H}^{\symb}\to\mathbb H^{\symb}$ is a homomorphism of Hopf algebras, i.e.~$\Delta_{\mathbb H}\circ\INV=\INV\circ\Delta_{\overline{\mathbb H}}$.
\end{proposition}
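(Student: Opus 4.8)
The plan is to reduce the statement to a claim about the inverted generators and then prove that claim by induction on depth. First I would observe that $\INV$ is an algebra map by construction and restricts to the identity on the regular symbols, so on a regular generator $R$ the identity $\Delta_{\mathbb H}(\INV(R))=(\INV\otimes\INV)(\Delta_{\overline{\mathbb H}}(R))$ holds automatically: the left side is $\Delta_{\mathbb H}(R)$, and by the very definition of $\Delta_{\mathbb H}$ as the restriction of $(\INV\otimes\INV)\circ\Delta_{\overline{\mathbb H}}$ this is the right side. Since both $\Delta_{\mathbb H}\circ\INV$ and $(\INV\otimes\INV)\circ\Delta_{\overline{\mathbb H}}$ are algebra maps, it suffices to verify the identity on the inverted generators $[y_d^{-1},\dots,y_1^{-1}\mid -t_d,\dots,-t_1]$, which I would handle as generating series so as to treat all weights $n_i$ at once.

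I would then induct on the depth $d$. The base case $d=1$ is the explicit check that $\Delta_{\mathbb H}$ applied to the right side of Example~\ref{ex:INVDepth1} agrees with $\INV\otimes\INV$ applied to the depth-$1$ coproduct of $[y^{-1}\mid -t]$; this uses only that $[y]_0$ is primitive, so that $\Delta(\exp([y]_0 t))=\exp([y]_0 t)\otimes\exp([y]_0 t)$. For the inductive step I would apply $\Delta_{\overline{\mathbb H}}$ to $[y_d^{-1},\dots,y_1^{-1}\mid\ldots]$ using the (order-reversed) Goncharov formula~\eqref{eq:GoncharovCoproductFormula}: each tensor factor produced is a regular symbol, an exponential $\exp([\cdot]_0 t)$, or an inverted symbol, where the inverted symbols occurring in the interior terms have depth strictly smaller than $d$, while the two extreme terms reproduce $\INV$ of the original symbol itself and so cancel against the corresponding extreme terms on the other side. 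Applying $\INV\otimes\INV$ and invoking the inductive hypothesis on the interior inverted factors converts the left side into an expression purely in regular symbols. On the other side I would first rewrite $\INV[y_d^{-1},\dots,y_1^{-1}\mid\ldots]$ by the recursion~\eqref{eq:GoncharovInversionFormula}, whose summands are products of lower-depth $\INV$'s with regular symbols and exponentials, then apply $\Delta_{\mathbb H}$, distributing over products, using primitivity of $[y]_0$ on the exponentials, and the inductive hypothesis once more.

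The heart of the argument, and the step I expect to be the main obstacle, is matching the two resulting power series coefficient by coefficient. Both carry the denominators $1/t_j$ and the many shifts $t_{j_\alpha}-t_{i_\alpha}$ appearing in~\eqref{eq:GoncharovInversionFormula} and~\eqref{eq:GoncharovCoproductFormula}; these denominators encode the canonical regularization of divergent iterated integrals, and tracking exactly which powers of each $t_j$ survive, so that no genuine pole is introduced and the $1/t_j$ terms recombine correctly, is where essentially all of the bookkeeping lives. I would organize this matching using binomial and convolution identities of the kind already exploited in the computation of $\widehat\omega$, reducing the equality of the two series to a finite collection of identities in the $t$-variables that can be verified directly.

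As both a guide and an independent sanity check, I would keep the iterated-integral picture in mind: the inversion formula~\eqref{eq:GoncharovInversionFormula} is, up to its regularization terms, the image under the Hopf morphism $\Phi\colon\mathbb I^{\symb}\to\overline{\mathbb H}^{\symb}$ of the path-reversal identity for the integrals $I(a_0;\dots;a_{d+1})$, an inverted symbol being $\Phi$ of such an integral with $a_{d+1}=0$ and its $\INV$ being $\Phi$ of the endpoint-flipped integral. Since $\Phi$ preserves the coproduct by Proposition~\ref{prop:PhiPreservesCoproduct} and is surjective, the clean coproduct on $\mathcal I(S)$ predicts the precise shape of every term that must appear in the direct computation and pinpoints the $1/t_j$ contributions as exactly the regularization discrepancy that the induction has to reproduce.
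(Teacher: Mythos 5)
Your skeleton matches the paper's proof exactly: the identity is immediate on regular symbols, one inducts on depth, and the defining recursion~\eqref{eq:GoncharovInversionFormula} together with the inductive hypothesis is used to push the coproduct through the lower-depth $\INV$'s. In the paper's organization this reduction is made cleanly by writing $\INV[y_d^{-1},\dots,y_1^{-1}|-t_d,\dots,-t_1]=\sum_i\INV A_i$ with each $A_i$ of lower depth, so that induction gives $\Delta_{\mathbb H}\circ\INV=\sum_i\INV\circ\Delta_{\overline{\mathbb H}}A_i$ on the generator, and the whole proposition collapses to the single vanishing statement $\INV\circ\Delta_{\overline{\mathbb H}}(A+B-C)=0$ of~\eqref{eq:Claim}, where $A+B-C$ is the element of $\overline{\mathbb H}^{\symb}$ encoding the inversion relation itself (the generator minus its inversion-formula expansion). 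Your two-sided version of this reduction is equivalent, if less tidy.

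The genuine gap is that the decisive step --- proving this vanishing --- is not carried out, and the method you propose for it would not do the job. You describe the matching as coefficient-by-coefficient bookkeeping to be handled by ``binomial and convolution identities of the kind already exploited in the computation of $\widehat\omega$,'' but that is not the mechanism: no binomial resummation occurs in this argument. What actually happens is structural. One computes $\Delta(A)$, $\Delta(B)$, $\Delta(C)$ as generating series indexed by Goncharov's sequences $i_0\leq j_0<i_1\leq j_1<\cdots<i_{k+1}$, splits $\Delta(A)$ according to whether $i_q<i_{q+1}$ or $i_q=i_{q+1}$, and then --- this is the key move your plan does not anticipate --- applies the recursion~\eqref{eq:GoncharovInversionFormula} a \emph{second} time, inside the second tensor factor, to the interior alternating sums $\sum_r(-1)^{r-i_q+1}[y_{r,\dots,i_q}^{-1}|\cdots][y_{r+1,\dots,i_{q+1}-1}|\cdots]$, replacing them by the corresponding $1/t_r$-terms. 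After this substitution the terms organize into four families $T_1,T_2,T_3,T_4$, and the proof ends with the term-by-term identifications $\Delta(B)=T_1$, $\Delta(C)=T_4$, $T_2=T_3$, forcing $\INV\circ\Delta(A+B-C)=0$. In particular the $1/t_j$ ``regularization'' terms are not an error to be absorbed by convolution identities; they are produced exactly by this second use of the recursion and cancel in pairs. Your iterated-integral heuristic via $\Phi$ and path reversal is a reasonable guide to what the answer should look like, but since $\Phi$ is not injective and $\INV$ lands in $\mathbb H^{\symb}$ rather than $\overline{\mathbb H}^{\symb}$, it does not substitute for this computation.
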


\section{The proof that coproduct commutes with inversion}\label{sec:DeltaCommutesWithINV}
We now prove Proposition~\ref{prop:INV} concluding the proof of Theorem~\ref{thm:HHopf}. Clearly, $\Delta\circ\INV=\INV\circ\Delta$ holds for the regular terms, so we consider only inverse terms. Assume this holds for lower depth (the depth one case $\INV\circ\Delta[y^{-1}|-t]=\Delta\circ\INV[y^{-1}|-t]$ is elementary). By the definition of $\INV$ (\ref{eq:GoncharovInversionFormula}) one has  
$\INV [y_d^{-1},\dots,y_1^{-1}|-t_d,\dots,-t_1] = \sum \INV A_i$ with all $A_i$ of lower depth. By induction, we thus have
\begin{equation}
 \Delta \circ \INV [y_d^{-1}\cdots,y_1^{-1}|-t_d,\cdots,-t_1] = \sum \Delta \circ \INV A_i = \sum \INV \circ \Delta A_i,
\end{equation}
so it suffices to show that $\sum \INV \circ \Delta A_i = \INV \circ \Delta [y_d^{-1},\dots,y_1^{-1}|-t_d,\dots,-t_1] $. Rearranging the terms this is equivalent to:
\begin{equation}\label{eq:Claim}
\begin{aligned}
0=\INV\circ\Delta\left(\sum_{j=0}^d\right.&(-1)^j[y_j^{-1},\cdots,y_1^{-1}|-t_j,\cdots,-t_1][y_{j+1},\cdots,y_d|t_{j+1},\cdots,t_d]\\
&+\sum_{j=1}^d\frac{(-1)^j}{t_j}[y_{j-1}^{-1},\cdots,y_1^{-1}|-t_{j-1},\cdots,-t_1][y_{j+1},\cdots,y_d|t_{j+1},\cdots,t_d]\\
&-\sum_{j=1}^d\frac{(-1)^{j}}{t_j}[y_{j-1}^{-1},\cdots,y_1^{-1}|t_j-t_{j-1},\cdots,t_j-t_1]\\
&\phantom{sdfsdfsfsdfsf}\exp([y_{1\to d+1}]_0t_j)[y_{j+1},\cdots,y_d|t_{j+1}-t_{j},\cdots,t_d-t_j]\left.\vphantom{\sum_{j=0}^d(-1)}\right).
\end{aligned}
\end{equation}
%This implies that that $\INV\circ\Delta([y_d^{-1}\cdots,y_1^{-1}|-t_d,\cdots,-t_1])$ can be written as $\INV\circ\Delta(\sum A_i)$ where all terms $A_i$ have lower depth. Hence, by induction, we have
%\begin{multline}
%   \INV\circ\Delta([y_d^{-1}\cdots,y_1^{-1}|-t_d,\cdots,-t_1])=\INV\circ\Delta(\sum A_i)=\\\Delta\circ\INV(\sum A_i)=\Delta\circ\INV([y_d^{-1}\cdots,y_1^{-1}|-t_d,\cdots,-t_1]).
%\end{multline}

% \begin{align*}
% &\INV\circ\Delta[y_n^{-1}\cdots,y_1^{-1}|-t_n,\cdots,-t_1]\\
% &=\INV\circ\Delta\left(\sum_{j=0}^{d-1}(-1)^{d-1-j}[y_j^{-1},\cdots,y_1^{-1}|-t_j,\cdots,-t_1][y_{j+1},\cdots,y_d|t_{j+1},\cdots,t_d]\right.\\
% &+\sum_{j=1}^d\frac{(-1)^{d-1-j}}{t_j}[y_{j-1}^{-1},\cdots,y_1^{-1}|-t_{j-1},\cdots,-t_1][y_{j+1},\cdots,y_d|t_{j+1},\cdots,t_d]\\
% &+\left.\sum_{j=1}^d\frac{(-1)^{d-j}}{t_j}[y_{j-1}^{-1},\cdots,y_1^{-1}|t_j-t_{j-1},\cdots,t_j-t_1]\exp([y_{1\to d+1}]_0t_j)[y_{j+1},\cdots,y_d|t_{j+1}-t_{j},\cdots,t_d-t_j]\right)\\
% &=\Delta\circ\INV\left(\sum_{j=0}^{d-1}(-1)^{d-1-j}[y_j^{-1},\cdots,y_1^{-1}|-t_j,\cdots,-t_1][y_{j+1},\cdots,y_d|t_{j+1},\cdots,t_d]\right.\\
% &+\sum_{j=1}^d\frac{(-1)^{d-1-j}}{t_j}[y_{j-1}^{-1},\cdots,y_1^{-1}|-t_{j-1},\cdots,-t_1][y_{j+1},\cdots,y_d|t_{j+1},\cdots,t_d]\\
% &+\left.\sum_{j=1}^d\frac{(-1)^{d-j}}{t_j}[y_{j-1}^{-1},\cdots,y_1^{-1}|t_j-t_{j-1},\cdots,t_j-t_1]\exp([y_{1\to d+1}]_0t_j)[y_{j+1},\cdots,y_d|t_{j+1}-t_{j},\cdots,t_d-t_j]\right)\\
% &=\Delta\circ\INV[y_n^{-1}\cdots,y_1^{-1}|-t_n,\cdots,-t_1].
% \end{align*}
We must prove~\eqref{eq:Claim}. We write the righthand side of~\eqref{eq:Claim} as $\INV\circ\Delta(A+B-C)$ and rewrite $\exp([X]_0t)$ as $X^t$. 
We first rewrite $\Delta(B)$ as
\begin{align*}
\sum_{r=1}^d&\frac{(-1)^r}{t_r}\Delta[y_{r-1,\dots,1}^{-1}|-t_{r-1,\dots,1}]\Delta[y_{r+1,\dots,d}|t_{r+1,\dots,d}]\\
=&\sum_{r=1}^d\frac{(-1)^r}{t_r}\sum_{1=i_0\leq j_0<\dots<i_q\leq r<i_{q+1}<\dots<i_{k+1}=d+1}[y_{i_{q-1}\to i_q,\dots,i_0\to i_1}^{-1}|-t_{j_{q-1},\dots,j_0}]\\
&\hphantom{\sum_{r=1}}[y_{i_{q+1}\to i_{q+2},\dots,i_k\to i_{k+1}}|t_{j_{q+1},\dots,k}]\otimes[y_{r-1,\dots,i_q}^{-1}|-t_{r-1,\dots,i_q}][y_{r+1,\dots,i_{q+1}-1}|t_{r+1,\dots,i_{q+1}-1}]\\
&\hphantom{\sum_{r=1}}\prod_{p=0}^{q-1}(-1)^{j_p-i_{p+1}+1}y_{i_p\to i_{p+1}}^{t_{j_p}}[y_{j_p+1,\dots,i_{p+1}-1}|t_{j_p+1,\dots,i_{p+1}-1}-t_{j_p}][y^{-1}_{j_p-1,\dots,i_p}|t_{j_p}-t_{j_p-1,\dots,i_p}]\\
&\hphantom{\sum_{r=1}}\prod_{p=q+1}^k(-1)^{j_p-i_p}y_{i_p\to i_{p+1}}^{t_{j_p}}[y_{j_p-1,\dots,i_p}^{-1}|t_{j_p}-t_{j_p-1,\dots,i_p}][y_{j_p+1,\dots,i_{p+1}-1}|t_{j_p+1,\dots,i_{p+1}-1}-t_{j_p}],
\end{align*}

which simplifies to
\begin{equation}
\begin{aligned}
&\sum_{1=i_0\leq j_0<\dots<i_q\leq j_q<i_{q+1}<\dots<i_{k+1}=d+1}\frac{(-1)^{j_q}}{t_{j_q}}[y_{i_{q-1}\to i_q,\dots,i_0\to i_1}^{-1}|-t_{j_{q-1},\dots,j_0}]\\
&[y_{i_{q+1}\to i_{q+2},\dots,i_k\to i_{k+1}}|t_{j_{q+1},\dots,j_k}]\otimes[y_{j_q-1,\dots,i_q}^{-1}|-t_{j_q-1,\dots,i_q}][y_{j_q+1,\dots,i_{q+1}-1}|t_{j_q+1,\dots,i_{q+1}-1}](-1)^{i_q+q+1}\\
&\prod_{0\leq p\leq k,p\neq q}(-1)^{j_p-i_p}y_{i_p\to i_{p+1}}^{t_{j_p}}[y_{j_p-1,\dots,i_p}^{-1}|t_{j_p}-t_{j_p-1,\dots,i_p}][y_{j_p+1,\dots,i_{p+1}-1}|t_{j_p+1,\dots,i_{p+1}-1}-t_{j_p}].
\end{aligned}
\end{equation}
Similarly, $\Delta(C)$ can be simplified to
\begin{equation}
\begin{aligned}
&\sum_{1\leq i_0\leq j_0<\dots<i_q\leq j_q<i_{q+1}<\dots<i_{k+1}=d+1}\frac{(-1)^{j_q}}{t_{j_q}}[y^{-1}_{i_{q-1}\to i_q,\dots,i_0\to i_1}|t_{j_q}-t_{j_{q-1},\dots,j_0}]y_{1\to d+1}^{t_{j_q}}\\
&[y_{i_{q+1}\to i_{q+2},\dots,i_k\to i_{k+1}}|t_{j_{q+1},\dots,j_k}-t_{j_q}]\otimes[y_{j_q-1,\dots,i_q}^{-1},t_{j_q}-t_{j_q-1,\dots,i_q}]y_{1\to d+1}^{t_{j_q}}\\
&[y_{j_q+1,\dots,i_{q+1}-1}|t_{j_q+1,\dots,i_{q+1}-1}-t_{j_q}](-1)^{i_q+q+1}\\
&\prod_{0\leq p\leq k,p\neq q}(-1)^{j_p-i_p}y_{i_p\to i_{p+1}}^{t_{j_p}-t_{j_q}}[y_{j_p-1,\dots,i_p}^{-1}|t_{j_p}-t_{j_p-1,\dots,i_p}][y_{j_p+1,\dots.i_{p+1}-1}|t_{j_p+1,\dots.i_{p+1}-1}-t_{j_p}]
\end{aligned}
\end{equation}
Finally, $\Delta(A)$ equals
\begin{equation}
\begin{aligned}
\sum_{r=0}^d&(-1)^r\Delta[y_{r,\dots,1}^{-1}|-t_{r,\dots,1}]\Delta[y_{r+1,\dots,d}|t_{r+1,\dots,d}]\\
=&\sum_{r=0}^d(-1)^r\sum_{1=i_0\leq j_0<\cdots<i_q\leq r+1\leq i_{q+1}<\cdots<i_{k+1}=d+1}[y_{i_{q-1}\to i_q,\dots,i_0\to i_1}^{-1}|-t_{j_{q-1},\dots,j_0}]\\
&[y_{i_{q+1}\to i_{q+2},\dots,i_k\to i_{k+1}}|t_{j_{q+1},\dots,j_k}]\otimes[y_{r,\dots,i_q}^{-1}|-t_{r,\dots,i_q}][y_{r+1,\dots,i_{q+1}-1}|t_{r+1,\dots,i_{q+1}-1}]\\
&\prod_{p=0}^{q-1}(-1)^{j_p-i_{p+1}+1}y_{i_p\to i_{p+1}}^{t_{j_p}}[y_{j_p+1,\dots,i_{p+1}-1}|t_{j_p+1,\dots,i_{p+1}-1}-t_{j_p}][y_{j_p-1,\dots,i_p}^{-1}|t_{j_p}-t_{j_p-1,\dots,i_p}]\\
&\prod_{p=q+1}^k(-1)^{j_p-i_p}y_{i_p\to i_{p+1}}^{t_{j_p}}[y_{j_p-1,\dots,i_p}^{-1}|t_{j_p}-t_{j_p-1,\dots,i_p}][y_{j_p+1,\dots,i_{p+1}-1}|t_{j_p+1,\dots,i_{p+1}-1}-t_{j_p}],
\end{aligned}
\end{equation}
which simplifies to
\begin{equation}\label{eq: Induction hypothesis usage}
\begin{aligned}
&\sum_{1=i_0\leq j_0<\cdots<i_q\leq i_{q+1}<\cdots<i_{k+1}=d+1}[y_{i_{q-1}\to i_q,\dots,i_0\to i_1}^{-1}|-t_{j_{q-1},\dots,j_0}]\\
&\hspace*{4pc}[y_{i_{q+1}\to i_{q+2},\dots,i_k\to i_{k+1}}|t_{j_{q+1},\dots,j_k}]\bigotimes(-1)^q\\
&\hspace*{4pc}\left(\sum_{i_q\leq r+1\leq i_{q+1}}(-1)^{r-i_q+1}[y_{r,\dots,i_q}^{-1}|-t_{r,\dots,i_q}][y_{r+1,\dots,i_{q+1}-1}|t_{r+1,\dots,i_{q+1}-1}]\right)\\
&\hspace*{4pc}\prod_{0\leq p\leq k,p\neq q}(-1)^{j_p-i_p}y_{i_p\to i_{p+1}}^{t_{j_p}}[y_{j_p-1,\dots,i_p}^{-1}|t_{j_p}-t_{j_p-1,\dots,i_p}][y_{j_p+1,\dots,i_{p+1}-1}|t_{j_p+1,\dots,i_{p+1}-1}-t_{j_p}].
\end{aligned}
\end{equation}
We split the sum into two parts depending on whether or not $i_q=i_{q+1}$:
\begin{equation}
\sum_{1=i_0\leq j_0<\cdots<i_q<i_{q+1}<\cdots<i_{k+1}=d+1},\quad\sum_{1=i_0\leq j_0<\cdots<i_q=i_{q+1}<\cdots<i_{k+1}=d+1}
\end{equation}
We then apply $\INV$ and use induction on the bracket of~\eqref{eq: Induction hypothesis usage}. The first sum becomes
\begin{equation}
\begin{aligned}
&\INV\left\{\sum_{1=i_0\leq j_0<\cdots<i_q< i_{q+1}<\cdots<i_{k+1}=d+1}[y_{i_{q-1}\to i_q,\dots,i_0\to i_1}^{-1}|-t_{j_{q-1},\dots,j_0}]\right.\\
&[y_{i_{q+1}\to i_{q+2},\dots,i_k\to i_{k+1}}|t_{j_{q+1},\dots,j_k}]\bigotimes(-1)^q\\
&\left(-\sum_{i_q\leq r\leq i_{q+1}-1}\frac{(-1)^{r-i_q+1}}{t_r}[y_{r-1,\dots,i_q}^{-1}|-t_{r-1,\dots,i_q}][y_{r+1,\dots,i_{q+1}-1}|t_{r+1,\dots,i_{q+1}-1}]\right.\\
&+\left.\sum_{i_q\leq r\leq i_{q+1}-1}\frac{(-1)^{r-i_q+1}}{t_r}[y_{r-1,\dots,i_q}^{-1}|t_r-t_{r-1,\dots,i_q}]y_{i_q\to i_{q+1}}^{t_r}[y_{r+1,\dots,i_{q+1}-1}|t_{r+1,\dots,i_{q+1}-1}-t_r]
\right)\\
&\left.\prod_{0\leq p\leq k,p\neq q}(-1)^{j_p-i_p}y_{i_p\to i_{p+1}}^{t_{j_p}}[y_{j_p-1,\dots,i_p}^{-1}|t_{j_p}-t_{j_p-1,\dots,i_p}][y_{j_p+1,\dots,i_{p+1}-1}|t_{j_p+1,\dots,i_{p+1}-1}-t_{j_p}]\right\}.
\end{aligned}
\end{equation}
This equals
\begin{equation}
\begin{aligned}
&=-\INV\left\{\sum_{1=i_0\leq j_0<\cdots<i_q\leq j_q<i_{q+1}<\cdots<i_{k+1}=d+1}\frac{(-1)^{j_q-i_q+q+1}}{t_{j_q}}[y_{i_{q-1}\to i_q,\dots,i_0\to i_1}^{-1}|-t_{j_{q-1},\dots,j_0}]\right.\\
&[y_{i_{q+1}\to i_{q+2},\dots,i_k\to i_{k+1}}|t_{j_{q+1},\dots,j_k}]\otimes[y_{j_q-1,\dots,i_q}^{-1}|-t_{j_q-1,\dots,i_q}][y_{j_q+1,\dots,i_{q+1}-1}|t_{j_q+1,\dots,i_{q+1}-1}]\\
&\left.\prod_{0\leq p\leq k,p\neq q}(-1)^{j_p-i_p}y_{i_p\to i_{p+1}}^{t_{j_p}}[y_{j_p-1,\dots,i_p}^{-1}|t_{j_p}-t_{j_p-1,\dots,i_p}][y_{j_p+1,\dots,i_{p+1}-1}|t_{j_p+1,\dots,i_{p+1}-1}-t_{j_p}]\right\}\\
&+\INV\left\{\sum_{1=i_0\leq j_0<\cdots<i_q\leq j_q<i_{q+1}<\cdots<i_{k+1}=d+1}\frac{(-1)^{j_q-i_q+q+1}}{t_{j_q}}[y_{i_{q-1}\to i_q,\dots,i_0\to i_1}^{-1}|-t_{j_{q-1},\dots,j_0}]\right.\\
&[y_{i_{q+1}\to i_{q+2},\dots,i_k\to i_{k+1}}|t_{j_{q+1},\dots,j_k}]\otimes[y_{j_q-1,\dots,i_q}^{-1}|t_{j_q}-t_{j_q-1,\dots,i_q}][y_{j_q+1,\dots,i_{q+1}-1}|t_{j_q+1,\dots,i_{q+1}-1}-t_{j_q}]\\
&\left.y_{i_q\to i_{q+1}}^{t_{j_q}}\prod_{\substack{0\leq p\leq k\\p\neq q}}(-1)^{j_p-i_p}y_{i_p\to i_{p+1}}^{t_{j_p}}[y_{j_p-1,\dots,i_p}^{-1}|t_{j_p}-t_{j_p-1,\dots,i_p}][y_{j_p+1,\dots,i_{p+1}-1}|t_{j_p+1,\dots,i_{p+1}-1}-t_{j_p}]\right\},
\end{aligned}
\end{equation}
which we write as $-\INV(T_1)+\INV(T_2)$. The second sum becomes
\begin{equation}
\begin{aligned}
&\INV\left\{\sum_{1=i_1\leq j_1<\dots<i_{k+1}=d+1}\Bigg(\right.\\
&\sum_{0\leq q\leq k}(-1)^q[y_{i_q\to i_{q+1},\dots,i_1\to i_2}^{-1}|-t_{j_q,\dots,j_1}][y_{i_{q+1}\to i_{q+2},\dots,i_k\to i_{k+1}}|t_{j_{q+1},\dots,j_k}]\Bigg)\\
&\left.\otimes\prod_{1\leq p\leq k}(-1)^{j_p-i_p}y_{i_p\to i_{p+1}}^{t_{j_p}}[y_{j_p-1,\dots,i_p}^{-1}|t_{j_p}-t_{j_p-1,\dots,i_p}][y_{j_p+1,\dots,i_{p+1}-1}|t_{j_p+1,\dots,i_{p+1}-1}-t_{j_p}]\right\}\\
&=\INV\left\{\sum_{1=i_1\leq j_1<\dots<i_{k+1}=d+1}\Bigg(\right.\\
&-\sum_{1\leq q\leq k}\frac{(-1)^q}{t_{j_q}}[y_{i_{q-1}\to i_q,\dots,i_1\to i_2}^{-1}|-t_{j_{q-1},\dots,j_1}][y_{i_{q+1}\to i_{q+2},\dots,i_k\to i_{k+1}}|t_{j_{q+1},\dots,j_k}]\\
&+\sum_{1\leq q\leq k}\frac{(-1)^q}{t_{j_q}}[y_{i_{q-1}\to i_q,\dots,i_1\to i_2}^{-1}|t_{j_q}-t_{j_{q-1},\dots,j_1}]y_{1\to d+1}^{t_{j_q}}[y_{i_{q+1}\to i_{q+2},\dots,i_k\to i_{k+1}}|t_{j_{q+1},\dots,j_k}-t_{j_q}]\Bigg)\\
&\left.\otimes\prod_{1\leq p\leq k}(-1)^{j_p-i_p}y_{i_p\to i_{p+1}}^{t_{j_p}}[y_{j_p-1,\dots,i_p}^{-1}|t_{j_p}-t_{j_p-1,\dots,i_p}][y_{j_p+1,\dots,i_{p+1}-1}|t_{j_p+1,\dots,i_{p+1}-1}-t_{j_p}]\right\}.
\end{aligned}
\end{equation}
This equals
\begin{equation}
\begin{aligned}
&=-\INV\left\{\sum_{1=i_0\leq j_0<\dots<i_{k+1}=d+1}\frac{(-1)^q}{t_{j_q}}[y_{i_{q-1}\to i_q,\dots,i_0\to i_1}^{-1}|-t_{j_{q-1},\dots,j_0}][y_{i_{q+1}\to i_{q+2},\dots,i_k\to i_{k+1}}|t_{j_{q+1},\dots,j_k}]\right.\\
&\left.\otimes\prod_{0\leq p\leq k}(-1)^{j_p-i_p}y_{i_p\to i_{p+1}}^{t_{j_p}}[y_{j_p-1,\dots,i_p}^{-1}|t_{j_p}-t_{j_p-1,\dots,i_p}][y_{j_p+1,\dots,i_{p+1}-1}|t_{j_p+1,\dots,i_{p+1}-1}-t_{j_p}]\right\}\\
&+\INV\left\{\sum_{1=i_0\leq j_0<\dots<i_{k+1}=d+1}\frac{(-1)^q}{t_{j_q}}[y_{i_{q-1}\to i_q,\dots,i_0\to i_1}^{-1}|t_{j_q}-t_{j_{q-1},\dots,j_0}]y_{1\to d+1}^{t_{j_q}}\right.\\
&[y_{i_{q+1}\to i_{q+2},\dots,i_k\to i_{k+1}}|t_{j_{q+1},\dots,j_k}-t_{j_q}]\\
&\left.\otimes\prod_{0\leq p\leq k}(-1)^{j_p-i_p}y_{i_p\to i_{p+1}}^{t_{j_p}}[y_{j_p-1,\dots,i_p}^{-1}|t_{j_p}-t_{j_p-1,\dots,i_p}][y_{j_p+1,\dots,i_{p+1}-1}|t_{j_p+1,\dots,i_{p+1}-1}-t_{j_p}]\right\}
\end{aligned}
\end{equation}
which we write as $-\INV(T_3)+\INV(T_4)$. We note that 
\begin{equation}
    \Delta(B)=T_1,\qquad \Delta(C)=T_4,\qquad T_2=T_3.
\end{equation}
This implies that $\INV\circ\Delta(A+B-C)=0$, proving the claim.

\bibliographystyle{alpha}
\bibliography{Bibliography}

\end{document}